\definecolor{webgreen}{rgb}{0,.5,0}
\definecolor{webbrown}{rgb}{.8,0,0}
\definecolor{emphcolor}{rgb}{0.95,0.95,0.95}
\ifpdf \hypersetup{pdftex,
            pdfstartview=FitH, 
            bookmarksopen=true,
            bookmarksnumbered=true
} \else \hypersetup{dvips} \fi
\newcommand {\costb}{\gamma_S}
\newcommand {\costs}{\gamma_I}
\renewcommand{\S}{\mathcal{S}}
\newcommand{\lapinv}{\Phi(q)}
\newcommand {\pcheck}{p}
\newcommand {\acheck}{1}
\newcommand {\lap}{\zeta}
\numberwithin{equation}{section}
\newtheorem{remark}{Remark}[section]
\newtheorem{lemma}{Lemma}[section]
\newtheorem{example}{Example}[section]
\newtheorem{assump}{Assumption}[section]
\newtheorem{definition}{Definition}[section]
\numberwithin{remark}{section} \numberwithin{proposition}{section}
\numberwithin{corollary}{section}
\newcommand {\R}{\mathbb{R}}
\newcommand {\F}{\mathcal{F}}
\newcommand {\p}{\mathbb{P}}
\newcommand {\E}{\mathbb{E}}
\newcommand{\diff}{{\rm d}}
\newcommand{\lev}{L\'{e}vy }
\title[Optimality of two-parameter strategies in stochastic control]{Optimality of two-parameter strategies in stochastic control}
\thanks{This version: \today. }
\thanks{$*$\, Department of Mathematics,
Faculty of Engineering Science, Kansai University, Suita-shi, Osaka 564-8680, Japan. Email: \mbox{{\em
kyamazak@kansai-u.ac.jp.}} Phone: +81-6-6368-1527.  }
\author[K. Yamazaki]{Kazutoshi Yamazaki$^*$}
\date{}
\begin{document}

\begin{abstract}
In this note, we study a class of stochastic control problems where
the optimal strategies are described by two parameters.  These include
a subset of singular control, impulse control, and two-player
stochastic games.  The parameters are first chosen by the two
continuous/smooth fit conditions, and then the optimality of the
corresponding strategy is shown by verification arguments. Under the
setting driven by a spectrally one-sided \lev process, these
procedures can be efficiently done thanks to the recent developments of
scale functions. In this note, we illustrate these techniques using
several examples where the optimal strategy as well as the value
function can be concisely expressed via scale functions.
\\
\noindent \small{\noindent  AMS 2010 Subject Classifications: 60G51, 93E20, 49J40 \\
\textbf{Key words:} singular control; impulse control; zero-sum games; optimal stopping;
 spectrally one-sided \lev processes; scale functions
}\\
\end{abstract}

\maketitle

\section{Introduction}

In stochastic control, one wants to optimally control a stochastic process so as to minimize or maximize the expected value of a given payoff that is determined by the paths of the control and/or controlled processes. In other words, one wants to find an \emph{optimal strategy} that attains the minimal or maximal expected value, called the \emph{(optimal) value function}.  Essentially all real-life phenomena contain uncertainty, and consequently the problem of stochastic control arises everywhere.  It is well-studied in, among others, finance (e.g.\ portfolio optimization, asset pricing, risk management), economics (search, real options, games), insurance, inventory management, and queues. 

Because it has a wide range of applications and is studied in a variety of fields, there are many different approaches for modeling.  A model can be categorized by (i) discrete/continuous time, (ii) discrete/continuous state,  and (iii) finite/infinite horizon. Except for very special cases, the only case one can expect an analytical solution is the continuous-time, continuous-state model with the infinite horizon. For other cases, one typically needs to rely on numerical approaches, such as value/policy iterations, backward inductions, and finite difference methods. See, e.g., Puterman \cite{puterman2014markov}.

In this note, we focus on a relatively simple class of stochastic control where analytical solutions can be obtained.  We assume the continuous-time, infinite-horizon case with the state space given by $\R$ or its subset.  In addition, randomness is assumed to be modeled by a one-dimensional \emph{spectrally one-sided \lev process}, or a \lev process with only one-sided jumps that does not have a monotone path a.s.  As the title of this note suggests, we are particularly interested in the cases where \emph{two parameters} are sufficient to describe the optimal strategy. While one-parameter optimal strategies are ubiquitous,  the study on two-parameter strategies is, to our best knowledge, rather rare.

\subsection{One-parameter strategies} \label{section_one_parameter} In a majority of stochastic control problems that admit analytical solutions, an optimal strategy can typically be described by one parameter.  

In the continuous-time, infinite-horizon \emph{optimal stopping} driven by a one-dimensional Markov process, the stopping and waiting regions are separated by \emph{free boundaries}, and in many cases the boundary is a single point.
 In American/Russian  perpetual options driven by a \lev process, it is known as in, e.g.,  \cite{Avram_2004} and \cite{mordecki2002optimal} that it is optimal to exercise when the process or its reflected process goes above or below a certain barrier for the first time.  In the quickest detection of a Wiener process \cite{shiryaev1961problem} where one wants to detect promptly the unobservable sudden change of the drift of the process, it is optimal to stop when the posterior probability process exceeds some level for the first time.  There are a number of other examples where the first crossing time of a boundary is optimal; see, e.g., \cite{Egami-Yamazaki-2010-1, Kyprianou_Surya_2007, leung_yamazaki_2013}, and also the book by Peskir and Shiryaev \cite{Peskir_Shiryaev_2006}.
 
In singular control, again the controlling and waiting regions are typically separated by a single point. Well-studied examples include  de Finetti's dividend problem, where one wants to maximize the total expected dividends accumulated until ruin (or the first time the [controlled] surplus process goes below zero).  A majority of the existing literature aim to show the optimality of the barrier strategy
that pays dividends  so that the surplus process is reflected at the barrier.  In the spectrally negative \lev model, it has been shown by \cite{Loeffen_2008} that a barrier strategy is optimal on condition that the \lev measure has a completely monotone density.  On the other hand, for the spectrally positive \lev case, optimality is guaranteed as shown in \cite{Bayraktar_2012}. Recently, these results have been extended to the cases when a strategy is assumed to be \emph{absolutely continuous} with respect to the Lebesgue measure: the optimal strategy can again be described by a single threshold, and the so-called \emph{refraction strategy} is optimal; see \cite{kyprianou2012optimal} and \cite{yin2014optimal}.

In the continuous-time inventory model (with the assumption that backorders are allowed), one wants to find an optimal replenishment strategy that minimizes the sum of inventory and controlling costs.  In the spectrally negative \lev case, under e.g.\ the convexity assumption on the inventory cost and with the absence of a fixed cost, it is shown to be optimal to replenish the item so that the inventory does not go below a certain level (see Section 7 of \cite{Yamazaki_2013}).
The absolutely continuous case has been studied by \cite{Hernandez_Perez_Yamazaki_2015} where they showed the optimality of a refraction strategy.


\subsection{Two-parameter strategies} \label{subsection_two_parameter}

In view of the examples above of one-parameter strategies, it is not difficult to see that, by a simple modification to the problem setting, one needs more parameters to describe the optimal strategy.
 Here we list several examples where one additional parameter will also be needed.

\subsubsection{Two-sided singular control} \label{description_two_sided}

In the above examples of singular control, it is assumed that control is one-sided: one can only decrease or increase the underlying process.  However, there are versions where it is two-sided and one can decrease and  also increase the process.

In the extension of de Finetti's problem with \emph{capital injections}, the surplus process can also be increased by injecting capital.  Typically, the problem requires that capital be injected so that the surplus process never goes below zero.
In inventory control, one can think of a version where the item can be replenished and also sold so as to avoid the shortage and excess of an inventory, respectively.

%

\subsubsection{Impulse control} \label{description_impulse_control}

Another extension from singular control can be considered by adding a fixed cost.  Namely, in addition to the cost (or reward) that is proportional to the amount of modification, a fixed cost is incurred each time it is modified.  In this case, it is clear that one parameter is no longer sufficient to describe the optimal strategy.  Instead, one can expect that the $(s,S)$-strategy (more commonly called the $(s,S)$-policy) is a reasonable candidate.  In other words, given two threshold levels $s$ and $S$, whenever the process goes above (or below) $s$, the inventory is pushed down (or up) to $S$.  The optimality of an $(s,S)$-strategy is often a primary objective in the impulse control literature.

%
%

\subsubsection{Zero-sum games between two players}  \label{description_games} In a (stochastic) game, multiple players aim to maximize their own expected payoffs.  However, the payoff depends not only on her action but also on other players' actions. The primary objective of game theory is to identify, if any, a \emph{Nash equilibrium (saddle point)}, which is a set of strategies such that each player cannot increase her expected payoff by solely changing hers, unless other players change their strategies as well.

Consider the case with two players where a common payoff is maximized by one player and is minimized by the other.
 Under the settings similar to those described in Section \ref{section_one_parameter} above, each player's strategy is described by one parameter, and consequently the equilibrium is described by two parameters.

\subsection{Fluctuation theory of spectrally one-sided \lev processes}

In this note, we assume throughout that the underlying (uncontrolled) process is a spectrally negative \lev process. The spectrally positive \lev process is its dual and hence the case driven by this process is also covered.
While spectrally one-sided \lev processes are not necessarily desirable processes for realistic models, at least analytically, it has a great advantage to work with these set of processes.

Over the last decade, significant developments in the fluctuation theory of spectrally one-sided \lev processes have been presented  (see, e.g., the textbooks by Bertoin \cite{Bertoin_1996}, Doney \cite{Doney_2007}, and Kyprianou \cite{Kyprianou_2006}).  Various fluctuation identities are known to be written using the so-called \emph{scale functions}, and these include essentially all the expectations needed to compute the net present values (NPVs) of the payoffs under the one-parameter and two-parameter strategies described above.  

The scale function is defined by its Laplace transform written in terms of the Laplace exponent of the process.  We shall see in this note that, despite its concise characterization, it still contains the information sufficient to solve the problem.

%
%

\subsection{Solution procedures}  

Using the expected NPVs of payoffs under each two-parameter strategy, written explicitly in terms of the scale function, the classical ``\emph{guess and verify}" approach can be carried out in a straightforward manner.  Here, we illustrate each step briefly below.


\subsubsection{Selection of the two parameters} \label{subsection_selection_two_param} As the form of the candidate strategy is already conjectured, the guessing part essentially is to decide on the values of the two parameters.  Because we need to identify two values, naturally we need two equations.

Before discussing on the two-parameter case, let us start with the one-parameter case to gain some intuition.  As reviewed above in Section \ref{section_one_parameter}, the parameter usually corresponds to the value of a barrier. Here, let us temporarily use $u_a(x)$ for the expected NPV when the parameter/barrier is $a$ and the starting value of the process is $x$.

In this case, the most intuitive and straightforward approach is to use the first-order condition. Namely, we first obtain the parameter, say $a^*$, that minimizes or maximizes $a \mapsto u_a(x)$.  Naturally,  it is expected (given that the barrier is in the interior of the state space), the derivative $\partial u_a(x) / \partial a |_{a = a^*}$ must vanish.  This can be easily done because $u_a(x)$ is written using the scale function, whose smoothness is well-studied (see Remark \ref{remark_smoothness} below).

Alternatively, one can apply what is known as \emph{continuous/smooth fit.}  This basically chooses the barrier $a^*$ so that \emph{the degree of smoothness of $u_a(\cdot)$ at $a$ increases by one by setting $a = a^*$}.   The smoothness at the barrier is in general dependent on the regularity (see Section \ref{section_regularity} below for its definition).  In optimal stopping and impulse control, the value function is expected to be  continuous (resp.\ continuously differentiable) at the barrier when it is irregular (resp.\ regular) for the controlling/stopping region.  On the other hand, for singular control, it is expected to be continuously differentiable (resp.\ twice continuously differentiable) at the barrier when it is irregular (resp.\ regular). 

At least for the \lev case, these two methods tend to lead to the same condition, which says that some function, say $a \mapsto g(a)$, of the barrier level $a$ (and not $x$) vanishes; see Figure \ref{one_parameter}.  In addition, under a suitable assumption, it typically is a strictly monotone function. Hence, the candidate barrier can be defined as its unique root.  We refer the reader to \cite{Egami-Yamazaki-2011} for the detailed discussions on the equivalence between these two methods for optimal stopping problems.

 \begin{figure}[htbp]
\begin{center}
\begin{minipage}{1.0\textwidth}
\centering
\begin{tabular}{c}
 \includegraphics[scale=0.4]{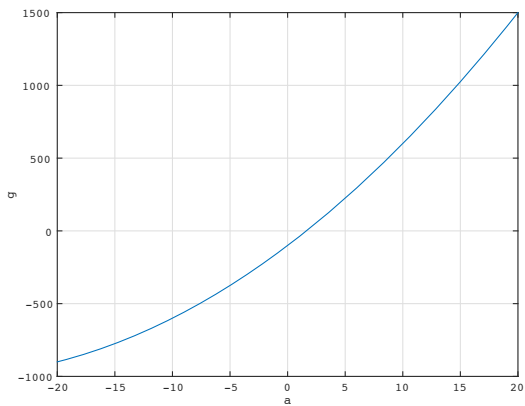}   
\end{tabular}
\end{minipage}
\end{center}
\caption{(One-parameter case) Typical function $a \mapsto g(a)$ obtained when the first-order or continuous/smooth fit condition is applied.  The desired parameter becomes its unique root.}  \label{one_parameter}
\end{figure}

We now move onto the two-parameter case.   Let us temporarily use $v_{a,b}(x)$ for the expected NPV under the strategy parametrized by $(a,b)$ when the starting value of the process is $x$. 

The first approach is again to use the first-order condition. This time, we apply it with respect to the two parameters $(a,b)$, or equivalently we compute the partial derivatives $\partial v_{a,b}(x) / \partial a$ and $\partial v_{a,b}(x) / \partial b$ and choose the parameters so that both of them vanish simultaneously. The second approach is to use  continuous/smooth fit at the barriers (with an additional condition for the case of impulse control).  Again, we end up having the same two equations, say $\Lambda(a,b) = 0$ and $ \lambda(a,b) = 0$.

The difficulty here is that this time we need to show the existence of solutions to the two equations, which are typically nonlinear functions. However, the two equations tend to be related in that one is the partial derivative of the other, i.e., $\lambda(a,b) = \partial \Lambda(a,b) / \partial b$.  In other words, one wants to obtain the curve $b \mapsto \Lambda(a^*, b)$ that touches and gets tangent to the x-axis at $b^*$; see Figure \ref{two_parameter}.

 \begin{figure}[htbp]
\begin{center}
\begin{minipage}{1.0\textwidth}
\centering
\begin{tabular}{c}
  \includegraphics[scale=0.4]{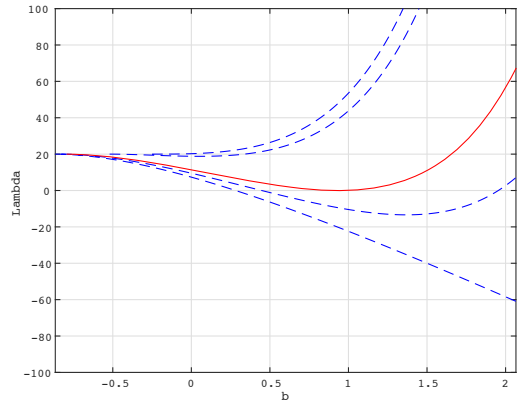}
\end{tabular}
\end{minipage}
\end{center}
\caption{(Two-parameter case) Typical function  obtained when the first-order or continuous/smooth fit condition is applied.  The plot is the curve $b \mapsto \Lambda(a,b)$ on $[a, \infty)$ for different values of $a$.  Typically the desired values $(a^*,b^*)$ become those such that $\lambda(a^*,b^*) = \partial \Lambda(a^*,b) / \partial b|_{b = b^*}$ = 0.  In other words,  one needs to find the starting point $a^*$ such that the curve  gets tangent to the x-axis at $b^*$, as in the solid curve in the plot. }  \label{two_parameter}
\end{figure}


\subsubsection{Verification of optimality}  After the values of the two parameters, say $(a^*,b^*)$, are selected, the optimality of the corresponding strategy must be verified. The so-called \emph{verification lemma} gives a sufficient condition for optimality that commonly require
\begin{enumerate}
\item the smoothness of $v_{a^*,b^*}$,
\item that $v_{a^*,b^*}$ solves the variational inequalities.
\end{enumerate}
The imposed conditions must be sufficient enough so that the discounted process of $v_{a^*,b^*}(\cdot)$ (killed upon exiting the state space), driven by any controlled process, is a local sub/super-martingale.  In general, the forms of the variational inequalities are well-known (see e.g.\ \cite{Oksendal_Sulem_2007}). However, it needs to be customized for technical details, and,  in particular, one needs to take care of the tails of $v_{a^*,b^*}$ and the \lev measure; because of the localizing arguments needed to apply It\^o's formula, one needs, at the end, to take a limit and interchange it over integrals.

Regarding (1), the values of $(a^*,b^*)$ are chosen at the guessing step so that $v_{a^*,b^*}$ is ``sufficiently smooth," although the smoothness at the boundary may not be sufficient enough to apply the usual version of It\^o's formula (and may need the Meyer-It\^o version).  For stochastic calculus for \lev processes, see \cite{MR2273672} and \cite{applebaum2009levy}.

Showing (2) is usually the hardest part, and sometimes it fails.  The variational inequalities need to hold at each point in the state space, which is separated into waiting and controlling regions.  In our examples when the state space is $\R$, except for the impulse control case, the waiting region is given by $(a^*,b^*)$ while the controlling region is $(-\infty, a^*) \cup (b^*, \infty)$.
At a point in the waiting region $(a^*,b^*)$, the proof is normally simple because the discounted process of $v_{a^*,b^*} (\cdot)$ driven by the underlying process is a martingale; see Section \ref{subsection_martigale_properties}.  On the other hand, the proof for the point in $(b^*, \infty)$ (resp.\ $(-\infty, a^*)$) tends to be difficult  for the spectrally negative (resp.\ positive) \lev case.  Intuitively, this is because the process can jump from one region to the other, where the form of $v_{a^*,b^*}$ changes.

\subsection{Comparison with other approaches}

The classical approach for stochastic control for \lev processes involves the integro-differential equations (IDEs). 

The candidate value function is first identified as the solution to an IDE with its boundary conditions given by the desired continuity/smoothness at the barriers. Except for special cases, it cannot be solved analytically, and hence verification arguments must be conducted using this implicit representation of the candidate value function.  This is especially difficult when the \lev measure is an infinite measure.


A clear advantage of using the fluctuation theory approach described above is that, if the function  $v_{a^*,b^*}$  can be computed using the scale function, computation is much more direct and simpler.  While the scale function in general does not admit analytically closed expression, the solution methods do not require details of its form. Typically, the selection of the parameters can be done by its asymptotic property at zero (see Section \ref{subsection_smoothness} below) and, for verification, some general properties of the scale function can be used.


Another advantage is that it can deal with the case with jumps of infinite activity/variation without any additional work.  The IDE approach often needs to assume that the jump part of the underlying process is a compound Poisson process.  However, there are a number of important examples with infinite \lev measures such as variance gamma, CGMY, and normal inverse Gaussian processes as well as classical ones as the gamma process and a subset of stable processes.  
 
%


 
 \subsection{Computation}  Using these approaches, the value function as well as the selected parameters are written in terms of the scale function. Hence the computation of these is essentially equivalent to that of the scale function.  
   Because the scale function is defined by its Laplace transform written in terms of the Laplace exponent,  it needs to be inverted either analytically or numerically.  

Some classes of \lev processes have rational forms of Laplace exponents; for these processes, analytical forms of scale functions can be easily obtained by partial fraction decomposition. Among them, the case with i.i.d.\ phase-type jumps (see \cite{Asmussen_2004}) is particularly important, because at least in principle it can approximate any \lev process.  This means that any scale function can be approximated by the scale function of this process.  Egami and Yamazaki \cite{Egami_Yamazaki_2010_2} conducted a sequence of numerical experiments to confirm the accuracy of this approximation.


Alternatively, the scale function can always be directly computed via numerical Laplace inversion.  As discussed in Kuznetsov et al.\ \cite{Kuznetsov2013}, the scale function can be written as the difference between an exponential function (whose parameter is defined by $\Phi(q)$ in the current note) and the resolvent (potential) term [see the third equation in \eqref{resolvent_density} below].  Hence, the computation is reduced to that of the resolvent term. It is a bounded function that asymptotically converges to  zero, and hence, numerical Laplace inversion can be quickly and accurately conducted. For more details, we refer the readers to Section 5 of \cite{Kuznetsov2013}.

\vspace{0.5cm}


In this note, we give a review on these techniques, using several examples on two-sided singular control, impulse control and games, as reviewed in Section \ref{subsection_two_parameter} above.  It is not our aim to give rigorous arguments and instead we give a guide on how the existing results on the fluctuation theory and scale function can be applied to solve stochastic control problems. For more technical details, we refer the reader to the original works cited throughout the note. 

The rest of the note is organized as follows:

In Section \ref{section_review_levy}, we review the spectrally negative \lev process and the scale function. In particular, we review the fluctuation identities  as well as some important properties of the scale function that will be used later in the note.

In Section \ref{section_singular_control}, we study two-sided singular control as introduced in  Section \ref{description_two_sided}.  We first give the formulation and review several examples.
We then discuss how the two parameters are chosen via continuous/smooth fit  and its optimality is shown via verification arguments.  We, in particular, focus on the problems considered in Bayraktar et al.\ \cite{Bayraktar_2012} and  Baurdoux and Yamazaki \cite{Baurdoux_Yamazaki_2015} and illustrate how these solution procedures can be taken.


In Section \ref{section_impulse_control}, we consider impulse control as addressed in  Section \ref{description_impulse_control}.  We see that the techniques used are similar to those used for singular control.  However, there are several major differences and new challenges in solving. We in particular use the case as in Yamazaki \cite{Yamazaki_2013} to illustrate the steps necessary to solve the problem.

In Section  \ref{section_game}, we study two-player optimal stopping games as in  Section \ref{description_games} with a special focus on the problem studied by Egami et al.\ \cite{Leung_Yamazaki_2011}. 
Some remarks on other forms of two-player zero-sum games are also given.


Throughout this study,  $x+ := \lim_{y \downarrow x}$ and $x-  := \lim_{y \uparrow x}$ are used to indicate the right- and left-hand limits, respectively. 
We let $\Delta \xi_t := \xi_t - \xi_{t-}$, for any right-continuous process $\xi$.  Finally,  for any interval $\mathcal{I} \subset \R$, let $\overline{\mathcal{I}} := \sup \mathcal{I}$, $\underline{\mathcal{I}} := \inf \mathcal{I}$, and $\mathcal{I}^o$ be the interior of $\mathcal{I}$.

\section{Spectrally Negative \lev Processes and Scale Functions} \label{section_review_levy}

In this section, we review the spectrally negative \lev process and its fluctuation theory.  We shall also review the scale function and list the fluctuation identities as well as some important properties that are frequently used in stochastic control.  Note that the spectrally positive \lev process is its dual, and the results introduced here can be directly applied as well.

Defined on a probability space $(\Omega, \mathcal{F}, \p)$, let $X$ be a spectrally negative \lev process with its Laplace exponent $X$ given by
\begin{align}
\psi(s)  := \log \E \left[ e^{s X_1} \right] =  \gamma s +\frac{1}{2}\sigma^2 s^2 + \int_{(-\infty,0)} (e^{s z}-1 - s z 1_{\{x > -1\}} ) \nu (\diff z), \quad s \geq 0, \label{laplace_spectrally_positive}
\end{align}
where $\nu$ is a \lev measure with the support $(-\infty,0)$ that satisfies the integrability condition $\int_{(-\infty,0)} (1 \wedge |z|^2) \nu(\diff z) < \infty$. For every $x \in \R$, let $\p_x$ be the conditional probability under which $X_0 = x$ (in particular, we let $\mathbb{P} \equiv \mathbb{P}_0$), and $\E_x$ and $\E$ be the corresponding expectation operators.  Let $\mathbb{F}$ be the filtration generated by $X$.

The path variation of the process is particularly important in stochastic control, especially when we apply continuous/smooth fit as we shall see in later sections.  For the case of a \lev process, it has paths of \emph{bounded variation} a.s.\ or otherwise it has paths of \emph{unbounded variation} a.s.  The former holds if and only if $\sigma = 0$ and $\int_{(-1,0)}|z| \, \nu(\diff z) < \infty$; in this case, the expression \eqref{laplace_spectrally_positive} can be simplified to
\begin{align*}
\psi(s)   =  \delta s + \int_{(-\infty, 0)} (e^{s z}-1 ) \nu (\diff z), \quad s \geq 0,
\end{align*}
with $\delta := \gamma - \int_{(-1,0)}z\, \nu(\diff z)$.  

Throughout the note, we exclude the case in which $X$ is the negative of a subordinator (i.e., $X$ is monotonically decreasing a.s.). This assumption implies that $\delta > 0$ when $X$ is of bounded variation.  

\subsection{Path variations and regularity} \label{section_regularity}

As defined in Definition 6.4 of \cite{Kyprianou_2006},
we call a point $x$ \emph{regular} for an open or closed set $B$ if $\p_x \{ T_B = 0 \} = 1$ where 
\begin{align*}
T_B:= \inf \{ t > 0: X_t \in B \},
\end{align*}
 and \emph{irregular} if $\p_x \{ T_B = 0 \} = 0$; here and throughout the note, let $\inf \varnothing = \infty$.  By Blumenthal's zero-one law, the probability $\p_x \{ T_B = 0 \}$ is either $0$ or $1$, and hence any point is either regular or irregular.

As summarized in Section 8 of \cite{Kyprianou_2006}, for any spectrally negative \lev process $X$, the point $0$ is regular for $(0, \infty)$, meaning that, if the process starts at $0$, it enters $(0, \infty)$ immediately.  On the other hand, $0$ is regular for $(-\infty, 0)$ if and only if the process has paths of unbounded variation.

We shall see in later sections that the smoothness of the value function at (free) boundaries depends on their regularity.

\subsection{Scale functions}
Fix $q \geq 0$. For any spectrally negative \lev process $X$, its $q$-scale function 
\begin{align*}
W^{(q)}: \R \rightarrow [0,\infty), 
\end{align*}
is a function that is zero on $(-\infty,0)$, continuous and strictly increasing on $[0,\infty)$, and is characterized by the Laplace transform:
\begin{align}
\int_0^\infty e^{-s x} W^{(q)}(x) \diff x = \frac 1
{\psi(s)-q}, \qquad s > \lapinv, \label{scale_function_laplace}
\end{align}
where
\begin{equation}
\lapinv :=\sup\{\lambda \geq 0: \psi(\lambda)=q\}. \notag
\end{equation}
Here, the Laplace exponent $\psi$ in \eqref{laplace_spectrally_positive} is known to be zero at the origin and convex on $[0,\infty)$.   We also define, for $x \in \R$,
\begin{align*}
\overline{W}^{(q)}(x) &:=  \int_0^x W^{(q)}(y) \diff y, \\
Z^{(q)}(x) &:= 1 + q \overline{W}^{(q)}(x),  \\
\overline{Z}^{(q)}(x) &:= \int_0^x Z^{(q)} (z) \diff z = x + q \int_0^x \int_0^z W^{(q)} (w) \diff w \diff z.
\end{align*}
Because $W^{(q)}(x) = 0$ for $-\infty < x < 0$, we have
\begin{align}
\overline{W}^{(q)}(x) = 0, \quad Z^{(q)}(x) = 1  \quad \textrm{and} \quad \overline{Z}^{(q)}(x) = x, \quad x \leq 0.  \label{z_below_zero}
\end{align}
We shall also define, when $\psi'(0+) > -\infty$,
\begin{align*}
R^{(q)} (x) := \overline{Z}^{(q)}(x) + \frac {\psi'(0+)} q, \quad x \in \R.
\end{align*}

In Figure \ref{figure_scale_function}, we show  sample plots of the scale function $W^{(q)}$ on $[0, \infty)$ for the cases of bounded and unbounded variation.  Its behaviors as $x \downarrow 0$ and $x \uparrow \infty$ are reviewed later in this section. 

 \begin{figure}[htbp]
\begin{center}
\begin{minipage}{1.0\textwidth}
\centering
\begin{tabular}{c}
 \includegraphics[scale=0.4]{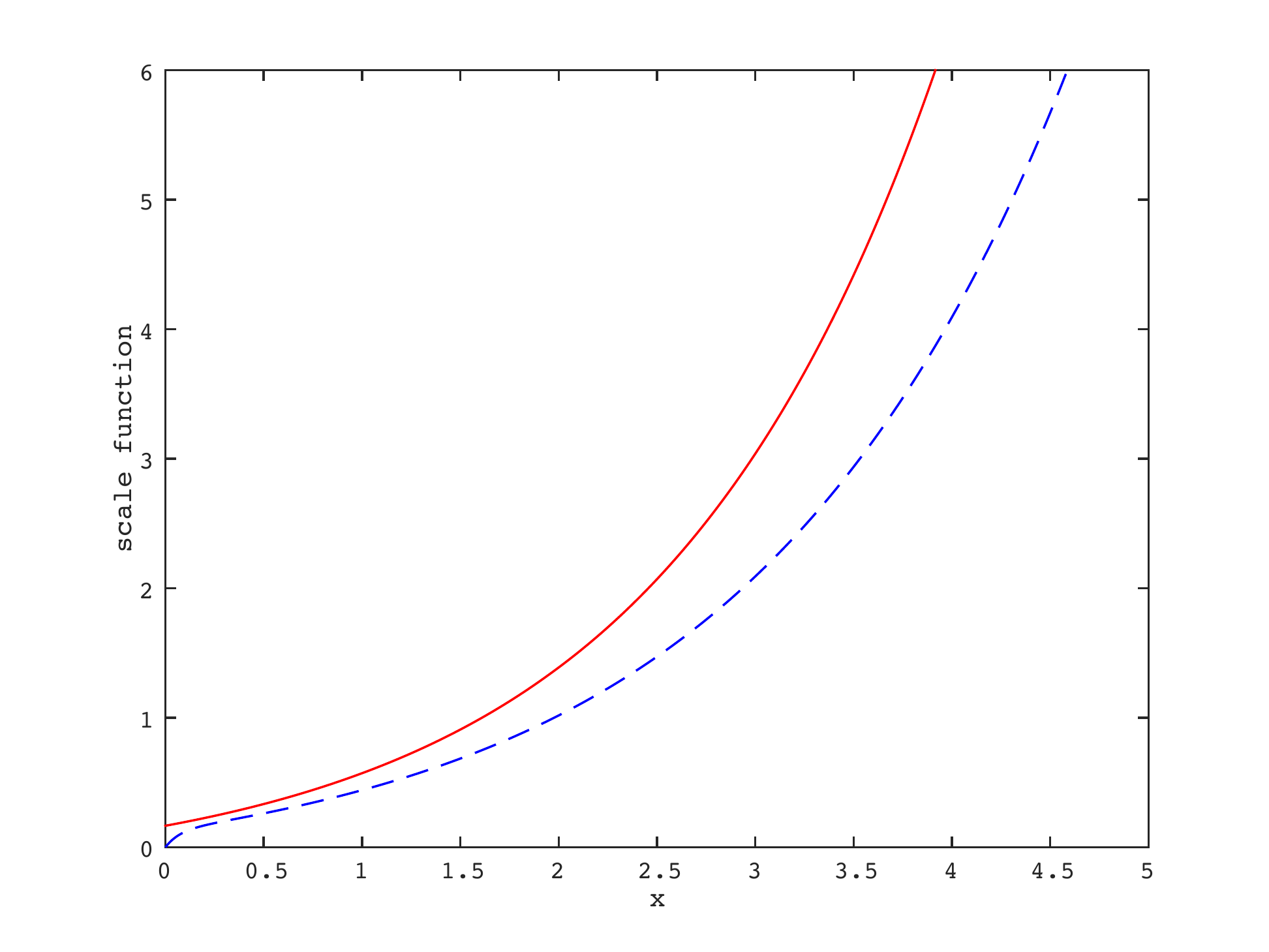}
 \end{tabular}
\end{minipage}
\caption{Plots of the scale function $W^{(q)}$ on $[0, \infty)$. The solid red curve is for the case of bounded variation; the dotted blue curve is for the case of unbounded variation (with $\sigma > 0$). As reviewed in \eqref{eq:Wq0}, its behaviors around zero depend on the path variation of the process.  In addition, as in \eqref{scale_function_asymptotics}, it increases exponentially as $x \rightarrow \infty$.}  \label{figure_scale_function}
\end{center}
\end{figure}

\subsection{Smoothness of scale functions} \label{subsection_smoothness}


A particularly important property of the scale function, which is helpful in applying continuous/smooth fit, is its behaviors around zero: as in Lemmas 3.1 and 3.2 of \cite{Kuznetsov2013}, 
\begin{align}\label{eq:Wq0}
W^{(q)} (0) &= \left\{ \begin{array}{ll} 0, & \textrm{if $X$ is of unbounded
variation,} \\ \frac 1 {\delta}, & \textrm{if $X$ is of bounded variation,}
\end{array} \right. \\
\label{eq:Wqp0}
W^{(q)\prime} (0+) &:= \lim_{x \downarrow 0}W^{(q)\prime} (x) =
\left\{ \begin{array}{ll}  \frac 2 {\sigma^2}, & \textrm{if }\sigma > 0, \\
\infty, & \textrm{if }\sigma = 0 \; \textrm{and} \; \nu(-\infty,0) = \infty, \\
\frac {q + \nu(-\infty, 0)} {\delta^2}, &  \textrm{if }\sigma = 0 \; \textrm{and} \; \nu(-\infty, 0) < \infty.
\end{array} \right.
\end{align}
Note that these can be confirmed in Figure \ref{figure_scale_function}.

As we shall see in later sections, when considering continuity/smoothness at the lower barrier, the difference between the right-hand and left-hand limits often becomes the product of  $W^{(q)}(0)$ and some function, say $\Lambda(a,b)$, of the two parameters (barriers) $(a,b)$ to be selected: for these to match, the parameters $(a,b)$ must be chosen so that either $W^{(q)}(0)$ or $\Lambda(a,b)$ vanishes.  

When $W^{(q)}(0) = 0$ (or equivalently $X$ is of unbounded variation), then the value function is expected to be smoother.  Repeating the same procedure for its derivative, one gets that the difference between the right-hand and left-hand limits becomes the product of $W^{(q) \prime}(0+)$ and $\Lambda(a,b)$; in this case, $(a,b)$ must be chosen so that $\Lambda(a,b) = 0$.

At the upper boundary, the smoothness tends to be the same for both bounded and unbounded variation cases: this gives another equation $\lambda(a,b) = 0$ where $\lambda(a,b)$ is the partial derivative of $\Lambda(a,b)$ with respect to $b$.

Regarding the smoothness of the scale function on $\R \backslash\{0\}$, we have the following; see \cite{Chan_2009} for more comprehensive results.  These smoothness results are important in order to apply It\^o's formula where the  (candidate) value function must be $C^2$ (resp.\ $C^1$) for the case of unbounded (resp.\ bounded) variation.
\begin{remark} \label{remark_smoothness}
If $X$ is of unbounded variation or the \lev measure does not have an atom, then it is known that $W^{(q)}$ is $C^1(\R \backslash \{0\})$.  Hence, 
\begin{enumerate}
\item $Z^{(q)}$ is $C^1 (\R \backslash \{0\})$ and $C^0 (\R)$ for the bounded variation case, while it is $C^2(\R \backslash \{0\})$ and $C^1 (\R)$ for the unbounded variation case,
\item $\overline{Z}^{(q)}$ is $C^2(\R \backslash \{0\})$ and $C^1 (\R)$ for the bounded variation case, while it is $C^3(\R \backslash \{0\})$ and $C^2 (\R)$ for the unbounded variation case.
\end{enumerate}
In addition, if $\sigma > 0$, then $W^{(q)}$ is $C^2(\R \backslash \{0\})$.
\end{remark}

\subsection{Fluctuation identities for spectrally negative \lev processes} \label{fluctuations_underlying}

Here we shall list some fluctuation identities for the spectrally negative \lev process $X$.
\subsubsection{Two-sided exit}
The most well-known application of the scale function is as follows. Let us define the first down- and up-crossing times, respectively, of $X$ by
\begin{align}
\label{first_passage_time}
T_b^- := \inf \left\{ t > 0: X_t < b \right\} \quad \textrm{and} \quad T_b^+ := \inf \left\{ t > 0: X_t >  b \right\}, \quad b \in \R.
\end{align}
Then, for any $b > 0$ and $x \leq b$,
\begin{align}
\begin{split}
\E_x \left[ e^{-q T_b^+} 1_{\left\{ T_b^+ < T_0^- \right\}}\right] &= \frac {W^{(q)}(x)}  {W^{(q)}(b)}, \\
 \E_x \left[ e^{-q T_0^-} 1_{\left\{ T_b^+ > T_0^- \right\}}\right] &= Z^{(q)}(x) -  Z^{(q)}(b) \frac {W^{(q)}(x)}  {W^{(q)}(b)}, \\
 \E_x \left[ e^{-q T_0^-} \right] &= Z^{(q)}(x) -  \frac q {\Phi(q)} W^{(q)}(x).
\end{split}
 \label{laplace_in_terms_of_z}
\end{align}
\subsubsection{Resolvent measures}
The scale function can express concisely the $q$-resolvent  (potential) measure.  As summarized in Theorem 8.7 and Corollaries 8.8 and 8.9 of \cite{Kyprianou_2006} (see also Bertoin \cite{Bertoin_1997}, Emery \cite{Emery_1973}, and Suprun \cite{Suprun_1976}),  we have
\begin{align} \label{resolvent_density}
\begin{split}
\E_x \Big[ \int_0^{T_{0}^- \wedge T^+_b} e^{-qt} 1_{\left\{ X_t \in \diff y \right\}} \diff t\Big] &= \Big[ \frac {W^{(q)}(x) W^{(q)} (b-y)} {W^{(q)}(b)} -W^{(q)} (x-y) \Big] \diff y,  \quad b > 0, \; x \leq b, \\
\E_x \Big[ \int_0^{T_{0}^-} e^{-qt} 1_{\left\{ X_t \in \diff y \right\}} \diff t\Big] &=\left[ e^{- \Phi(q)y} W^{(q)} (x) -W^{(q)} (x-y) \right] \diff y,  \\
\E_x \Big[ \int_0^\infty e^{-qt} 1_{\left\{ X_t \in  \diff y \right\}} \diff t\Big] &=  \left[  \frac {e^{\Phi(q) (x-y)}} {\psi'(\Phi(q))} -W^{(q)} (x-y) \right] \diff y.
\end{split}
\end{align}

Now define, for any measurable function $h$ and $s \in \R$,
\begin{align*} 
\begin{split}
\Psi(s;h) &:= \int_0^\infty e^{- \Phi(q) y}   h(y+s) \diff y = \int_s^\infty e^{- \Phi(q) (y-s)}   h(y) \diff y, \\
\varphi_s (x;h) &:= \int_{s}^x W^{(q)} (x-y) h(y) \diff y, \quad x \in \R.
\end{split}
\end{align*}
Here $\varphi_s (x;h) = 0$ for any $x \leq s$ because $W^{(q)}$ is uniformly zero on $(-\infty,0)$. 
Then it is clear that
\begin{align*} 
\begin{split}
\E_x \Big[ \int_0^{T_{a}^- \wedge T^+_b} e^{-qt} h(X_t) \diff t\Big] &= \frac {W^{(q)}(x-a)} {W^{(q)}(b-a)}\varphi_a(b; h)  - \varphi_a(x; h),  \quad b > a,  \; x \leq b,\\
\E_x \Big[ \int_0^{T_{a}^-} e^{-qt} h(X_t) \diff t\Big] &= \Psi(a;h) W^{(q)}(x-a) - \varphi_a(x; h), \quad x, a \in \R,
\end{split}
\end{align*}
where we assume for the latter that $\Psi(a; h)$ is well-defined and finite.


\subsection{Fluctuation identities for the infimum and reflected processes}

Let us define  the \emph{running infimum and supremum processes} 
\begin{align*}
\underline{X}_t := \inf_{0 \leq t' \leq t} X_{ t'} \quad \textrm{and} \quad \overline{X}_t := \sup_{0 \leq t' \leq t} X_{ t'}, \quad  t \geq 0.
\end{align*}
Then, the processes reflected from above at $b$ and below at $a$ are given, respectively, by
\begin{align*}
\bar{Y}_t^b &:= X_t - D_t^b \quad \textrm{and} \quad \underline{Y}_t^a := X_t + U_t^a, \quad  t \geq 0,
\end{align*}
where
\begin{align*}
D_t^b := (\overline{X}_t -b) \vee 0 \quad \textrm{and} \quad U_t^a := (a - \underline{X}_t) \vee 0, \quad t \geq 0,
\end{align*}
are the cumulative amounts of reflections that push the processes downward and upward, respectively.

\subsubsection{Fluctuation identities for the infimum process}

By Corollary 2.2 of \cite{Kuznetsov2013}, 
\begin{align*}
\E \Big[ \int_0^{\infty} e^{-qt} 1_{\left\{ - \underline{X}_t \in \diff y \right\}} \diff t \Big] = \frac 1 {\Phi(q)} W^{(q)} (\diff y) -  W^{(q)} (y) \diff y = \frac 1 {\Phi(q)}[\Theta^{(q)}(y) \diff y + W^{(q)}(0) \delta_0(\diff y)],  \end{align*}
where $W^{(q)}(\diff y)$ is the measure such that $W^{(q)}(y) = \int_{[0,y]}W^{(q)}(\diff z)$  (see  \cite[(8.20)]{Kyprianou_2006}) and $\delta_0$ is the Dirac measure at zero.  Here, for all $y > 0$,
\begin{align} \label{def_theta}
\begin{split}
\Theta^{(q)}(y) &:= W^{(q)\prime} (y+)- \Phi(q) W^{(q)} (y) 
> 0.
\end{split}
\end{align}
See another probabilistic interpretation of this function in  Section 3.3 in \cite{surya_yamazaki_2014}.  
This function often appears in stochastic control.  See in particular Sections \ref{game_existence_example} and  \ref{section_impulse_control} below and also \cite{surya_yamazaki_2014}.


\subsubsection{Fluctuation identities for $\bar{Y}_t^b$}
Fix $a < b$.
Define  the first down-crossing time of $\bar{Y}_t^b$ as:
\begin{align*}
\overline{\tau}_{a,b} := \inf \{ t > 0: \bar{Y}_t^b < a\}.
\end{align*}
First, the Laplace transform of $\overline{\tau}_{a,b}$ is given, as in  Proposition 2(ii) of \cite{Pistorius_2004}, by
\begin{align*}
\E_x [ e^{- q \overline{\tau}_{a,b}} ] =Z^{(q)}(x-a) - q W^{(q)}(b-a)  \frac {W^{(q)}(x-a)} {W^{(q)\prime}((b-a)+)}, \quad x \leq b.
\end{align*}
Second, using its resolvent  given in Theorem 1(ii) of \cite{Pistorius_2004}, we have, for $x \leq b$,
\begin{align*}
\E_x \Big[ \int_0^{\overline{\tau}_{a,b}} e^{-qt}  h (\bar{Y}_t^b) \diff t   \Big] 
& = \frac {W^{(q)}(x-a)} {W^{(q) \prime}((b-a)+)}  \left[  W^{(q)}(0) h(b) + \int_a^b h(y) W^{(q) \prime}(b-y)  \diff y \right] - \varphi_a(x; h).
\end{align*}
Finally, as in Proposition 1 of \cite{Avram_et_al_2007}, the discounted cumulative amount of reflection from above is given by
\begin{align*}
\E_x \Big[ \int_{[0,\overline{\tau}_{a,b}]} e^{-qt} \diff D_t^b \Big] = \frac {W^{(q)}(x-a)} {W^{(q)\prime}((b-a)+)}, \quad x \leq b.
\end{align*}

\subsubsection{Fluctuation identities for $\underline{Y}_t^a$}
Fix $a < b$. Define  the first up-crossing time of $\underline{Y}_t^a$ as:
\begin{align*}
\underline{\tau}_{a,b} := \inf \{ t > 0: \underline{Y}_t^a > b\}.
\end{align*}
First, as in page 228 of \cite{Kyprianou_2006}, its Laplace transform is concisely given by
\begin{align*}
\E_x [ e^{- q \underline{\tau}_{a,b}} ] = \frac {Z^{(q)}(x-a)} {Z^{(q)}(b-a)}, \quad x \leq b.
\end{align*}
Second, by Theorem 1(i) of \cite{Pistorius_2004}, for any $x \leq b$,
\begin{align*}
\E_x \Big[ \int_0^{\underline{\tau}_{a,b}} e^{-qt}  h (\underline{Y}_t^a) \diff t   \Big] 
&=  \frac {Z^{(q)}(x-a)} {Z^{(q)}(b-a)}\varphi_a (b;h) -  \varphi_a (x;h).
\end{align*}
Finally, as in the proof of Theorem 1 of \cite{Avram_et_al_2007}, the discounted cumulative amount of reflection from below, given $\psi'(0+) > -\infty$, is 
\begin{align*}
\E_x \Big[ \int_0^{{\underline{\tau}_{a,b}}} e^{-qt} \diff  U_t^a \Big] &= - R^{(q)}(x-a)  + Z^{(q)}(x-a) \frac {R^{(q)}(b-a) } {Z^{(q)}(b-a)}, \quad  x \leq b.
\end{align*}

\subsection{Fluctuation identities for doubly reflected \lev processes} Fix $a < b$.
As a variant of the reflected processes addressed above, the
 \emph{doubly reflected \lev process} is given by 
\begin{align}
Y_t^{a,b} := X_t + U_t^{a,b} - D_t^{a,b}, \quad t \geq 0. \label{doubly_reflected_def}
\end{align}
This process is reflected at the two barriers $a$ and $b$ so as to stay on the interval $[a,b]$;  see page 165 of \cite{Avram_et_al_2007} for the construction of the processes $U^{a,b}$, $D^{a,b}$, and $Y^{a,b}$.  To put it simply, $U^{a,b}$ is activated whenever $Y^{a,b}$ attempts to downcross $a$ so that it stays at or above $a$; similarly, $D^{a,b}$ is activated so that $Y^{a,b}$ stays at or below $b$. 

First, as in Theorem 1 of \cite{Avram_et_al_2007}, for $x \leq b$,
\begin{align} \label{reflection_double_reflected}
\begin{split}
\E_x \left[ \int_{[0,\infty)} e^{-qt} \diff D_t^{a,b} \right] &= \frac {Z^{(q)}(x-a)} {q W^{(q)}(b-a)}, \\
\E_x \left[ \int_{[0,\infty)} e^{-qt} \diff U_t^{a,b} \right] &= - R^{(q)}(x-a)  + \frac {Z^{(q)}(b-a)} {q W^{(q)}(b-a)} Z^{(q)}(x-a),
\end{split}
\end{align}
where we assume $\psi'(0+) > -\infty$ for the latter.

Second, using the $q$-resolvent density of $Y^{a,b}$ given in Theorem 1 of \cite{Pistorius_2003}, we have, for $x \leq b$,
\begin{align} \label{resolvent_doubly_reflected}
\E_x \left[ \int_{[0,\infty)} e^{-qt} h(Y_t^{a, b}) \diff t \right] &= \int_a^b h(y) \left[ \frac {Z^{(q)}(x-a) W^{(q)\prime}(b-y)} {q W^{(q)}(b-a)} - W^{(q)}(x-y) \right] \diff y \\ &+  h(b) \Big[ Z^{(q)}(x-a) \frac {W^{(q)}(0)} {q W^{(q)}(b-a)} \Big].
\end{align}



\subsection{Other properties of the scale function}  Here we list some other properties of the scale function that are often useful in solving stochastic control problems.

\subsubsection{Asymptotics as $x \rightarrow \infty$} \label{subsection_asymptotics} Suppose $q > 0$.  It is known that the scale function $W^{(q)}$ increases exponentially: we have
\begin{align}
 W^{(q)} (x) / e^{\Phi(q)x} \xrightarrow{x \rightarrow \infty} \psi'(\Phi(q))^{-1}. \label{scale_function_asymptotics}
\end{align}
By this, the following limits are also immediate:
\begin{align*}
\lim_{x \rightarrow \infty}\frac {W^{(q) \prime}(x+)} {W^{(q)}(x)} = \Phi(q), \quad \lim_{x \rightarrow \infty}\frac {Z^{(q)}(x)} {W^{(q)}(x)} = \frac q {\Phi(q)} \quad \textrm{and} \quad \lim_{x \rightarrow \infty}\frac {\overline{Z}^{(q)}(x)} {W^{(q)}(x)} = \frac q {\Phi^2(q)}.
\end{align*}
Note also that, for $s \in \R$ and any measurable function $h$ such that $\Psi(s; h)$ is well-defined,
\begin{align}
\lim_{x \rightarrow \infty}\frac {\varphi_s (x;h)} {W^{(q)}(x-s)} = \Psi(s;h). \label{conv_phi_psi}
\end{align}

\subsubsection{Log-concavity}  \label{section_log_concavity} The scale function $W^{(q)}$ is known to be log-concave: as in (8.18) and Lemma 8.2 of \cite{Kyprianou_2006},
\begin{align*}
 \frac {W^{(q)\prime}(y+)} {W^{(q)}(y)} \leq \frac {W^{(q)\prime}(x+)} {W^{(q)}(x)},  \quad  y > x > 0.
\end{align*}
In addition, $W^{(q)\prime}(x-) \geq W^{(q)\prime}(x+)$ for all $x > 0$.  These properties are sometimes needed for the monotonicity of related functions; see Sections \ref{remark_other_impulse} and \ref{game_verification_example} below.

\subsubsection{Martingale properties} \label{subsection_martigale_properties}

Let $\mathcal{L}$ be the infinitesimal generator associated with
the process $X$ applied to a \emph{sufficiently smooth} function $h$  (i.e.\ $C^1$ [resp.\ $C^2$] for the case $X$ is of bounded [resp.\ unbounded] variation): for $x \in \R$,
\begin{align} \label{generator}
\begin{split}
\mathcal{L} h(x) &:= \gamma h'(x) + \frac 1 2 \sigma^2 h''(x) + \int_{(-\infty,0)} \left[ h(x+z) - h(x) -  h'(x) z 1_{\{-1 < z < 0\}} \right] \nu(\diff z), \\
\textrm{(resp. }\mathcal{L} h(x) &:= \delta h'(x) +  \int_{(-\infty,0)} \left[ h(x+z) - h(x)  \right] \nu(\diff z)\textrm{).}
\end{split}
\end{align}
The variational inequalities are written using this generator with $h$ replaced with the candidate value function.  Typically, it makes sense (except at the selected [free] boundaries), thanks to its smoothness that can be confirmed by that of the scale function as in Remark \ref{remark_smoothness}.  At the boundaries,  for optimal stopping and impulse control, the function may not be smooth enough and hence \eqref{generator} is not well-defined, although its right and left limits normally exist and are finite. In such cases, the Meyer-It\^o formula (see, e.g., Theorem 71 of Protter \cite{MR2273672}) is used in the proof of verification lemma.

One useful known fact regarding the generator \eqref{generator} is as follows. By Proposition 2 of \cite{Avram_et_al_2007} and as in the proof of Theorem 8.10 of \cite{Kyprianou_2006}, the processes 
\begin{align*}
e^{-q (t \wedge T^-_{0} \wedge T^+_B)} Z^{(q)}(X_{t \wedge T^-_{0} \wedge T^+_B}) \quad \textrm{and} \quad e^{-q (t \wedge T^-_{0} \wedge T^+_B)}  R^{(q)} (X_{t \wedge T^-_{0} \wedge T^+_B}), \quad t \geq 0,
\end{align*}
for any $B > 0$ are martingales, where we assume $\psi'(0+) > -\infty$ for the latter.
Thanks to the smoothness of $Z^{(q)}$ and $\overline{Z}^{(q)}$ on $(0,\infty)$ as in Remark \ref{remark_smoothness}, we obtain 
\begin{align}
(\mathcal{L}-q) Z^{(q)}(y) =(\mathcal{L}-q) R^{(q)}(y) = 0, \quad y > 0. \label{martingale_Z_R}
\end{align}
The same result holds for $W^{(q)}$ and 
\begin{align}
(\mathcal{L}-q) W^{(q)}(y) = 0, \quad y > 0,  \label{martingale_W}
\end{align}
 on condition that it is sufficiently smooth.

Another useful known fact is that, as in the proof of Lemma 4.5 of \cite{Egami-Yamazaki-2010-1}, if $h$ is continuous,
\begin{align}
(\mathcal{L}-q) \varphi_{s} (x;h) = h(x), \quad x > s.
\end{align}
These properties are often sufficient to prove that the candidate value function is harmonic in the waiting (non-controlling) region.

\subsection{Some further notations} Before closing this section, we shall define, if they exist, the following threshold levels.

\begin{definition} \label{def_a_bar}
Given a closed interval $\mathcal{I} \subset \R$ and a measurable function $h$, let  $\bar{a} = \bar{a}(h) \in \mathcal{I}$ be such that $h(x) < 0$
for  $x \in (-\infty, \bar{a}) \cap \mathcal{I}$, and $h(x) > 0$ for $x \in (\bar{a},\infty) \cap \mathcal{I}$, if such a value exists.  If $h(x) < 0$ for $x \in \mathcal{I}$, then we set $\bar{a} = \bar{a}(h) = \overline{\mathcal{I}}$.   If $h(x) > 0$ for $x \in \mathcal{I}$, then we set $\bar{a} = \bar{a}(h) = \underline{\mathcal{I}}$.
\end{definition}

\begin{definition} \label{def_a_bar_under}
Given a closed interval $\mathcal{I} \subset \R$ and a measurable function $h$ such that $\Psi(x; h)$ is well-defined and finite for all $x \in \mathcal{I}$, let  $\underline{a} = \underline{a}(h) \in \mathcal{I}$ be such that  $\Psi(x;h) < 0$
for  $x \in (-\infty, \underline{a}) \cap \mathcal{I}$, and $\Psi(x;h) > 0$ for $x \in (\underline{a},\infty) \cap \mathcal{I}$, if such a value exists.   If $\Psi(x; h) < 0$ for $x \in \mathcal{I}$, then we set $\underline{a} = \underline{a}(h) = \overline{\mathcal{I}}$.   If $\Psi(x; h) > 0$ for $x \in \mathcal{I}$, then we set $\underline{a} = \underline{a}(h) = \underline{\mathcal{I}}$.
\end{definition}

These values  for a suitably chosen (often monotone) function $h$ give us particularly important information.  Typically, as in the examples shown in later sections, the values of $\underline{a}$ and $\bar{a}$ can act as upper or lower bounds of the two parameters $(a^*, b^*)$ to be chosen.  See, in particular,  Sections \ref{two_sided_control_existence_example}, \ref{inventory_existence_example} and \ref{game_existence_example} and also Tables \ref{table_two_sided}, \ref{table_impulse}, \ref{table_game}.

In addition, the value $\underline{a}$ can be understood as the optimal parameter $a^*$ when the other parameter is $b^* = \infty$. We will also see that the value $\bar{a}$ is important in the verification step; see Lemmas \ref{lemma_easy_results_doubly_reflected}(2), \ref{lemma_easy_results_impulse}(2), and \ref{lemma_verification_games}(2).

\section{Two-sided Singular Control} \label{section_singular_control}
In this section, we consider the singular control problem where one can increase and also decrease the underlying process.
An admissible strategy $\pi := \left\{ (U_t^{\pi}, D_t^{\pi}); t \geq 0 \right\}$ is given by a pair of nondecreasing, right-continuous, and $\mathbb{F}$-adapted processes with $U^\pi_{0-}=D^\pi_{0-}=0$ such that the controlled process
 \begin{align*}
 Y_{t}^\pi := X_t + U_t^\pi - D_t^\pi, \quad t \geq 0,
 \end{align*}
stays in some given closed interval $\mathcal{I}$ uniformly in time. 
Let $\Pi$ be the set of all admissible strategies.

We consider the sum of the running and controlling costs; its expected NPV is given by
\begin{align*}
v^\pi (x) := \E_x \Big[ \int_0^\infty e^{-qt}  f (Y_{t}^\pi) \diff t  + \int_{[0,\infty)}e^{-qt}\left(C_U \diff U^\pi_{t} + C_D \diff D^\pi_{t} \right) \Big], \quad x \in \R,
\end{align*}
for $q > 0$, some continuous and piecewise continuously differentiable function $f$ on $\mathcal{I}$ and fixed constants $C_U, C_D \in \R$ satisfying \begin{align}
C_U + C_D > 0. \label{assump_C_sum}
\end{align}
Here, if $x < \underline{\mathcal{I}}$ (resp.\ $x > \overline{\mathcal{I}}$), then $U_0^\pi = \Delta U_0^\pi  = \underline{\mathcal{I}} - x$ (resp.\ $D_0^\pi = \Delta D_0^\pi  = x - \overline{\mathcal{I}}$) so that $Y_0^\pi \in \mathcal{I}$.

The problem is to compute the value function given by
\begin{align*}
v(x) := \inf_{\pi \in \Pi} v^\pi(x), \quad x \in \R,
\end{align*}
 and the optimal strategy that attains it, if such a strategy exists. 
 
 Throughout this and next sections, let us also use the slope-changed version of $f$ given by
 \begin{align}
\tilde{f}(x) &:= f(x) + C_U q x, \quad x \in \R. \label{def_f_tilde}
\end{align}
The roles and significance of this function will be clear shortly.  We also assume the following so that the expected NPV associated with $U_t^\pi$ is finite.
\begin{assump} \label{finiteness_X_1}
We assume $\E X_1 = \psi'(0+) > -\infty$.
\end{assump}

\begin{example} \label{example_capital_injection}In the optimal dividend problem with capital injections driven by a spectrally negative \lev process, it is required that the controlled risk process stay nonnegative uniformly in time (i.e.\ $\mathcal{I} = [0, \infty)$).  One wants to maximize the expected NPV of discounted dividends minus that for capital injections. This is a maximization problem with $U_t^\pi$ and $D_t^\pi$ being, respectively, the cumulative amounts of capital injections and dividends until $t \geq 0$.  We can formulate this as a minimization problem as above by setting $C_D = -1$ and $C_U = \beta$ where $\beta > 1$ is the unit cost of capital injection.  Here $f$ is assumed to be zero.  This problem has been solved by Avram et al.\  \cite{Avram_et_al_2007} for a general spectrally negative \lev process.
\end{example}
\begin{example} \label{example_capital_injection_dual} In the dual model of Example \ref{example_capital_injection}, it is assumed that the underlying process is a spectrally positive \lev process.
By flipping the processes  with respect to the origin, it is easy to see that the problem is equivalent to the above formulation driven by a spectrally negative \lev process with $\mathcal{I} = (-\infty, 0]$, $C_D = \beta$ and $C_U = -1$. This problem has been solved by  Bayraktar et al.\ \cite{Bayraktar_2012} for a general spectrally positive \lev process.
%
%
\end{example}

\begin{example} \label{example_two_sided_control} A version of continuous-time inventory control considers the case where inventory can be increased (replenished) and decreased (sold).  With the absence of fixed costs and if backorders are allowed, the problem can be formulated as above with $\mathcal{I} = \R$.
 In currency rate control (see, e.g., \cite{jeanblanc1993impulse, mundaca1998optimal}), where a central bank controls the currency rate so as to prevent it from going too high or too low, can also be modeled in the same way.
The classical Brownian motion and continuous diffusion models have been solved by \cite{MR716123} and \cite{matomaki2012solvability}, respectively.  In Baurdoux and Yamazaki \cite{Baurdoux_Yamazaki_2015}, it has been solved for a general spectrally negative \lev process.  In this note, we assume that $f$ is convex for this example.


\end{example}

\subsection{The double reflection strategy} 

In all the examples above, the optimal strategy is shown to be a \emph{double barrier strategy} $\pi_{a,b} := \{ U^{a,b}, D^{a,b}\}$ with the resulting controlled process being the
 doubly reflected \lev process given in \eqref{doubly_reflected_def}.

By \eqref{reflection_double_reflected} and \eqref{resolvent_doubly_reflected}, we can directly compute, for $a < b$,
\begin{align*}
v_{a,b} (x) &:= \E_x \Big[ \int_0^\infty e^{-qt}  f (Y_{t}^{a,b}) \diff t  + \int_{[0,\infty)}e^{-qt} (C_U \diff U^{a,b}_{t} + C_D \diff D^{a,b}_{t} ) \Big], \quad x \in \R.
\end{align*}
For $x \leq b$, it is given by
\begin{align} \label{expression_v_a_b}
\begin{split}
v_{a,b} (x) &=\frac  {\Lambda(a,b)} {q W^{(q)}(b-a)}  {Z^{(q)}(x-a)} -C_U R^{(q)}(x-a) + \frac {f(a)} q Z^{(q)} (x-a) - \varphi_{a}(x; f) \\
\end{split}
\end{align}
where
\begin{align} \label{about_gamma}
\begin{split}
\Lambda(a,b) 
&:= C_D + C_U + \varphi_a (b;\tilde{f}'), \quad b \geq a.
\end{split}
\end{align}
For $x > b$, we have $v_{a,b} (x) = v_{a,b} (b) + C_D (x-b)$.

\begin{remark} In particular, when $f \equiv 0$ (as in Examples \ref{example_capital_injection} and \ref{example_capital_injection_dual} above), for $a < b$,
\begin{align*}
\Lambda(a,b) &= C_D + C_U Z^{(q)}(b-a), \\
v_{a,b} (x) 
&= \frac {C_D + C_U Z^{(q)}(b-a) } {q W^{(q)}(b-a)}{Z^{(q)}(x-a)} -C_U R^{(q)}(x-a), \quad x \leq b;
\end{align*}
see \cite{Avram_et_al_2007}  and \cite{Bayraktar_2012}.
\end{remark}

\subsection{Smoothness of the value function} \label{smoothness_double_barrier}

Focusing on the set of double barrier strategies, the first step is to narrow down to a candidate optimal strategy by deciding on the threshold values, say $a^*$ and $b^*$.  Because the spectrally negative \lev process can reach any point with positive probability, we must have that  $[a^*, b^*] \subset \mathcal{I}$.

As we have discussed in Section \ref{subsection_selection_two_param}, the two parameters can be  identified by the first-order condition or the smooth fit condition.  The first approach uses the first-order conditions at $a^*$ and $b^*$; because $a^*$ and $b^*$ must minimize $v_{a,b}$ over $a$ and $b$,  partial derivatives $\partial v_{a,b} (x)/ \partial a |_{a = a^*, b = b^*}$ and $\partial v_{a,b} (x)/ \partial b |_{a = a^*, b = b^*}$ must vanish, at least when the minimizers are in the interior of $\mathcal{I}$.  The second approach uses the condition that the value function is smooth.  Here, we focus on the second smoothness approach because the computation is slightly easier, and  we need to confirm  the smoothness of $v_{a^*, b^*}$ after all when we verify its optimality.


In singular control, the value function normally admits twice continuous differentiability (resp.\ continuous differentiability)  at each interior point in $\mathcal{I}$ when it is regular (resp.\ irregular).  Thanks to the smoothness of the scale function as in Remark \ref{remark_smoothness},  the only points of $v_{a^*,b^*}$ we need to pay attention are $a^*$ and $b^*$ where the functions are pasted together.
Due to the asymmetry of the spectrally negative \lev process, what we observe at these two points will be different.  Here, recall the definition of regularity and its relation  with the path variation of the process as reviewed in Section \ref{section_regularity}.

Regarding the smoothness of the value function at the lower barrier $a^*$,
\begin{enumerate}
\item if $a^*$ is regular for $(-\infty, a^*)$ (or equivalently $X$ is of unbounded variation), then the twice continuous differentiability at $a^*$ is expected;
\item if $a^*$ is irregular for $(-\infty, a^*)$ (or equivalently $X$ is of bounded variation), then the continuous differentiability at $a^*$ is expected.
\end{enumerate}
Regarding the smoothness at the upper barrier $b^*$, because it is always regular for $(b^*, \infty)$, twice-differentiability is expected at $b^*$ regardless of the path variation of $X$.

These procedures can be carried out in a straightforward fashion by using the expression \eqref{expression_v_a_b} in terms of the scale function.
By taking derivatives in \eqref{expression_v_a_b} and using \eqref{def_f_tilde},
\begin{align} \label{v_derivative_general}
\begin{split}
v_{a,b}' (x)&= \frac { \Lambda(a,b)} {W^{(q)}(b-a)} {W^{(q)}(x-a)} -C_U  -\varphi_a (x;\tilde{f}'), \quad a < x < b, \\
v_{a,b}'' (x+)&= \frac {\Lambda(a,b)} {W^{(q)}(b-a)} W^{(q)\prime}((x-a)+)  -\int_a^x W^{(q)\prime}(x-y) \tilde{f}'(y) \diff y -  \tilde{f}'(x+) W^{(q)}(0), \quad a < x < b.
\end{split}
\end{align}
In view of the former of \eqref{v_derivative_general},  by \eqref{about_gamma},
\begin{align}\label{smooth_fit_all_b}
\begin{split}
v_{a,b}' (b-)&= C_D = v_{a,b}'(b+), \\
v_{a,b}' (a+)&=   \frac {\Lambda(a,b)} {W^{(q)}(b-a)}  {W^{(q)}(0)} -C_U = \frac {\Lambda(a,b)} {W^{(q)}(b-a)}  {W^{(q)}(0)} + v'_{a,b} (a-). 
\end{split}
\end{align}
In other words, the continuous differentiability of $v_{a,b}$ holds at $b$ regardless of the path variation.  On the other hand, in view of \eqref{eq:Wqp0}, while the differentiability at $a$ holds for the case of unbounded variation, it only holds if
\begin{align}
\mathfrak{C}_a: \frac {\Lambda(a,b)} {W^{(q)}(b-a)} = 0 \label{smoothness_condition1}
\end{align}
for the case of bounded variation. Here, the case $b = \infty$ is understood as $\lim_{b \rightarrow \infty}\Lambda(a,b)/W^{(q)}(b-a) = 0$ where by \eqref{conv_phi_psi} we can show that 
\begin{align}
\lim_{b \rightarrow \infty} \frac {\Lambda(a,b)} {W^{(q)}(b-a)} = \Psi(a; \tilde{f}'). \label{limit_Gamma_a_b}
\end{align}

In view of the latter of \eqref{v_derivative_general},
\begin{align*}
v_{a,b}'' (b-) &= \frac {\Lambda(a,b) } {W^{(q)}(b-a)} {W^{(q)\prime}((b-a)-)}  -\lambda(a,b), \\
v_{a,b}'' (a+)&= \frac {\Lambda(a,b) }  {W^{(q)}(b-a)}  {W^{(q)\prime}(0+)} - \tilde{f}'(a+) W^{(q)}(0),
\end{align*}
where
\begin{align}
\lambda(a,b) :=  \frac \partial {\partial b} \Lambda(a,b-) =  \int_a^b W^{(q)\prime}(b-y) \tilde{f}'(y) \diff y +  \tilde{f}'(b-) W^{(q)}(0), \quad b > a. \label{small_gamma}
\end{align}
For the unbounded variation case where the continuous differentiability at $a$ automatically holds, again by 
\eqref{eq:Wqp0}, its twice continuous differentiability holds on condition that $\mathfrak{C}_a$ holds.  Now, for both the bounded and unbounded variation cases, the twice continuous differentiability at $b$ holds if
\begin{align}
\mathfrak{C}_b: \frac {\Lambda(a,b) } {W^{(q)}(b-a)} {W^{(q)\prime}((b-a)-)}  -\lambda(a,b) = 0. \label{smoothness_condition2}
\end{align}  
In particular, \emph{on condition that $\mathfrak{C}_a$ holds}, the condition $\mathfrak{C}_b$ can be simplified to 
\begin{align}
\mathfrak{C}_b': \lambda(a,b) = 0. \label{smoothness_condition2_prime}
\end{align}  

\begin{remark}
When $f \equiv 0$, the conditions $\mathfrak{C}_a$ and $\mathfrak{C}_b$, respectively, are simplified to 
\begin{align}
\mathfrak{C}_a^0&: \frac {C_D + C_U Z^{(q)}(b-a)} {W^{(q)}(b-a)} = 0, \label{smoothness_condition1_f0} \\
\mathfrak{C}_b^0&: \frac {C_D + C_U Z^{(q)}(b-a)} {W^{(q)}(b-a)} {W^{(q)\prime}((b-a)-)}  - q C_U W^{(q)}(b-a) = 0. \label{smoothness_condition2_f0}
\end{align}
\end{remark}

These conditions on $a$ and $b$ can be used to identify the pairs $(a^*, b^*)$.  However, these do not necessarily hold unless $a^*, b^* \in int(\mathcal{I})$.  Here, we give examples where $a^*$ and/or $b^*$ become boundaries of $\mathcal{I}$.
\begin{remark} \label{remark_a_b_single}
\begin{enumerate}
\item In Example \ref{example_capital_injection}, it is expected, because $\beta > 1$ (the unit cost of capital injection is higher than the unit reward of dividend), that capital is injected only when it is necessary to make the company alive, and hence $a^* = 0$.  
\item Similarly, under the formulation with the underlying spectrally negative \lev process described in Example \ref{example_capital_injection_dual},  it is expected that $b^* = 0$.
\item In Example \ref{example_two_sided_control}, if the increment of $f$ as $|x| \rightarrow \infty$ is at most linear and small in comparison to the unit controlling costs $C_U$ and $C_D$, it may not be desirable to activate at all the processes $U^\pi$ and/or $D^\pi$. Hence, $a^* = -\infty$ and/or $b^* = \infty$. 
\end{enumerate}
\end{remark}

%
%


\subsection{Existence of $(a^*, b^*)$}  \label{subsection_existence_a_b}

The first challenge is to show the existence of such $(a^*, b^*)$.  Here, we assume the following.

\begin{assump} \label{assump_f_tilde} 
We assume that $\bar{a} \equiv \bar{a}(\tilde{f}')$ (see Definition \ref{def_a_bar}) exists and is finite, where $\tilde{f}'$ is understood as its right-hand derivative if not differentiable.
\end{assump}
We shall see that $\bar{a}$ is a point such that $a^*$ lies on the left of $\bar{a}$ and $b^*$ lies on its right; see Table \ref{table_two_sided}.

\subsubsection{The case of Example \ref{example_capital_injection}} \label{subsection_example_capital_injection}
 It is clear that Assumption \ref{assump_f_tilde} is satisfied with $\bar{a} = 0$.  As in Remark \ref{remark_a_b_single}(1), $a^* = 0 = \bar{a} = \underline{\mathcal{I}}$.  
Therefore, the condition $\mathfrak{C}_a^0$ has no effect and we only require $\mathfrak{C}_b^0$ which reduces to
\begin{align}
\frac {C_D + C_U Z^{(q)}(b)} {W^{(q)}(b)} {W^{(q)\prime}(b-)}  - q C_U W^{(q)}(b) = 0. \label{gamma_condition_SN_dividend}
\end{align}
Hence, $b^* > 0 = \bar{a} = \overline{\mathcal{I}}$ can be chosen as the smallest value of $b$ such that \eqref{gamma_condition_SN_dividend} holds.
This matches the condition given in (5.6) of \cite{Avram_et_al_2007}.

\subsubsection{The case of Example \ref{example_capital_injection_dual}} \label{subsection_example_capital_injection_dual}
 Again, Assumption \ref{assump_f_tilde} is satisfied with $\bar{a} = 0$.
 Because $C_D = \beta$ and $C_U = -1$, there is a unique $a^* < 0 = \bar{a}$ that satisfies $\mathfrak{C}_a^0$ or equivalently that
 \begin{align}
C_D + C_U Z^{(q)}(-a^*) = 0. \label{gamma_condition_SP_dividend}
\end{align}
Hence, the candidate optimal strategy is given by $a^* =-(Z^{(q)})^{-1} (- C_D / C_U)=-(Z^{(q)})^{-1} (\beta)$ and $b^* = 0$.  This matches the result in \cite{Bayraktar_2012}.  
 

\subsubsection{The case of Example \ref{example_two_sided_control}}  \label{two_sided_control_existence_example} For Example \ref{example_two_sided_control}, 
we  want  a pair $(a^*, b^*)$ such that \eqref{smoothness_condition1} and \eqref{smoothness_condition2} hold simultaneously. Equivalently, we want $(a^*, b^*)$ such that the function $b \mapsto \Lambda(a^*, b)$ attains a minimum $0$ at $b^*$ (if $b^* < \infty$).  Note that, for any $a \in \R$,  $b \mapsto \Lambda(a,b)$ starts at $\Lambda(a,a) = C_D + C_U > 0$.

In this case, $\bar{a}$ always exists by the assumption that $f$ is convex.  Assumption \ref{assump_f_tilde} requires that it is finite.  Recall now Definition \ref{def_a_bar_under}.  The convexity assumption and Assumption \ref{assump_f_tilde} guarantees that $\underline{a} =\underline{a}(\tilde{f}')$ also exists and is finite (with the understanding that $\tilde{f}'$ is the right-hand derivative if it is not differentiable). Note that necessarily $\underline{a} < \overline{a}$.

%
%
%
%
%


 \begin{figure}[htbp]
\begin{center}
\begin{minipage}{1.0\textwidth}
\centering
\begin{tabular}{cc}
 \includegraphics[scale=0.4]{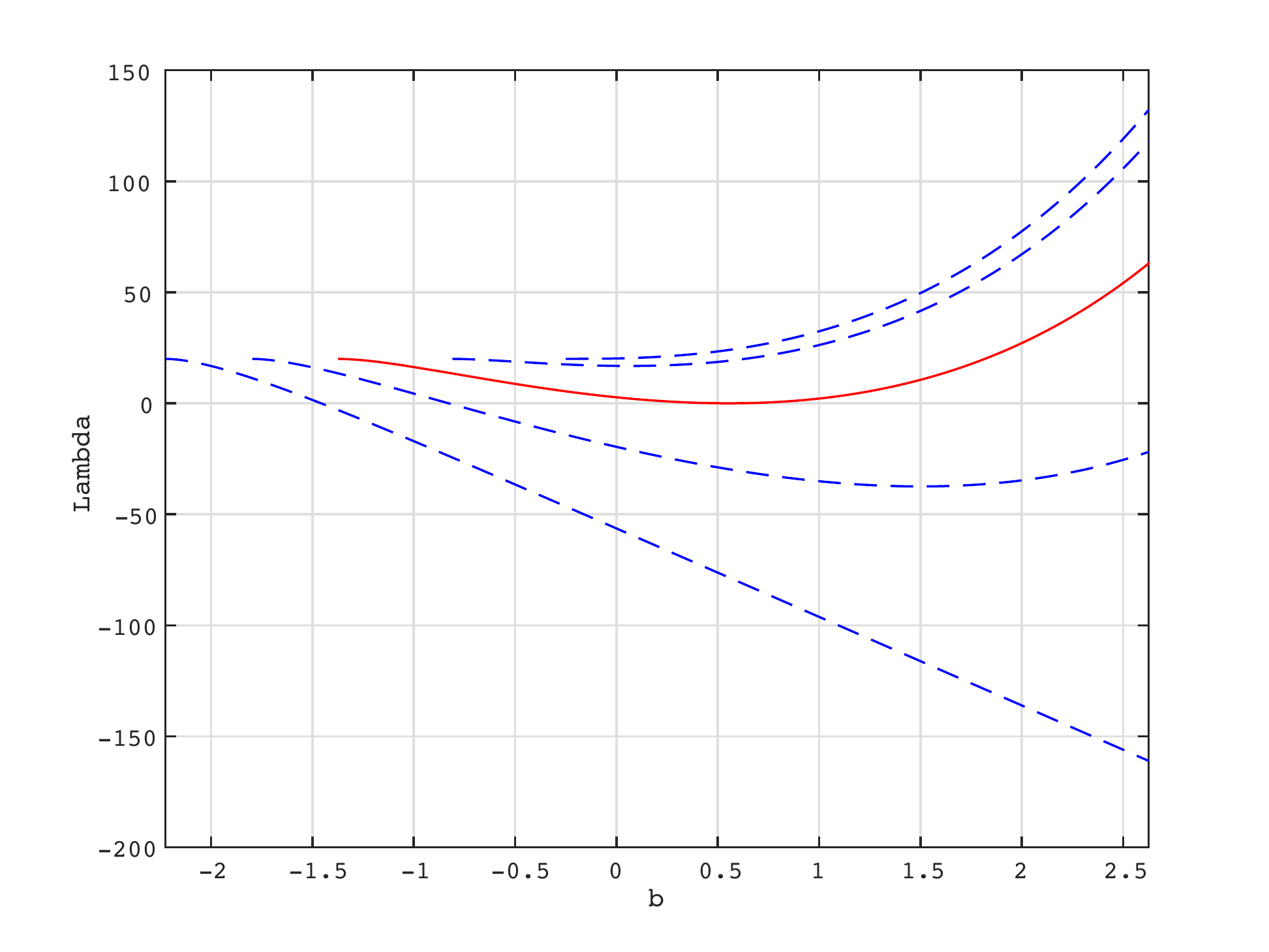} & \includegraphics[scale=0.4]{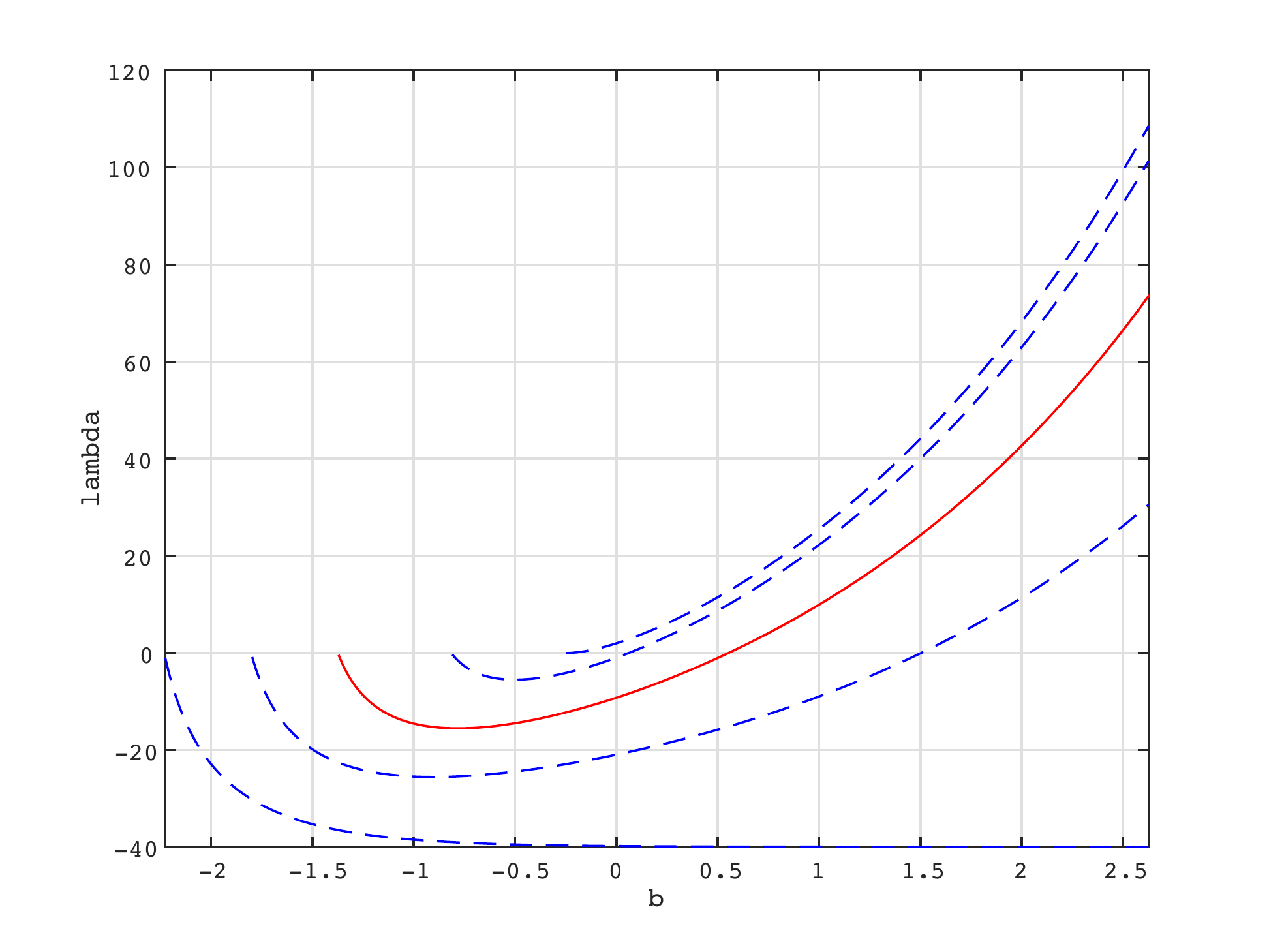}  \\
 $b \mapsto \Lambda(a,b)$ & $b \mapsto \lambda(a,b)$
\end{tabular}
\end{minipage}
\caption{Existence of $(a^*, b^*)$ for Example \ref{example_two_sided_control}. Plots of $b \mapsto \Lambda(a, b)$ on $[a, \infty)$ for the starting values $a = \underline{a}, (\underline{a} + a^*)/2, a^*, (a^* + \overline{a})/2, \overline{a}$ are shown.  The solid curve in red corresponds to the one for $a = a^*$; the point at which $\Lambda(a^*, \cdot)$ is tangent to the x-axis (or $\lambda(a^*, \cdot)$ vanishes) becomes $b^*$. The function $\Lambda(\underline{a},\cdot)$ is monotonically decreasing while $\Lambda(\bar{a},\cdot)$ is monotonically increasing. Equivalently, $\lambda(\underline{a},\cdot)$ is uniformly negative while $\lambda(\bar{a},\cdot)$ is uniformly positive. }  \label{Gamma_plot_quadratic}
\end{center}
\end{figure}

Figure \ref{Gamma_plot_quadratic} shows some sample plots of $b \mapsto \Lambda(a,b)$ and $b \mapsto \lambda(a,b)$. As observed in these plots, we shall show that $a^*$ must lie on $[\underline{a}, \bar{a})$.  

To see this,  when $a \geq \bar{a}$, then $\Lambda(a, \cdot)$ is uniformly positive because $\lambda(a,b) \geq 0$ for $b > a$ in view of \eqref{small_gamma}. 
In addition, by the convergence \eqref{limit_Gamma_a_b} and how $\underline{a}$ is chosen, $\lim_{b \rightarrow \infty }\Lambda(a,b) = \infty$ if $a > \underline{a}$, $\lim_{b \rightarrow \infty }\Lambda(a,b) = -\infty$ if $a < \underline{a}$, and \eqref{limit_Gamma_a_b} vanishes if $a = \underline{a}$.
On the other hand, for any $a < \overline{a}$ and $a < b$,
\begin{align}
\frac \partial {\partial a}\Lambda(a+,b) 
&= -  \tilde{f}'(a+) W^{(q)}(b-a) >  0. \label{derivative_Gamma_a}
\end{align}
This implies that the infimum $a \mapsto \inf_{b > a} \Lambda(a,b)$ is monotonically increasing. Hence, the desired $a^*$ such that $\Lambda(a^*, \cdot)$ touches the x-axis, if it exists, must lie on $(\underline{a}, \bar{a})$. 

By these observations, one can attempt to decrease the value of $a$ starting at $\bar{a}$ until we arrive at (1) a point $a^*$ such that $\inf_{b > a^*} \Lambda(a^*,b) = 0$ or (2) the point $\underline{a}$, whichever comes first.
%
%
For each case, we set $(a^*, b^*)$ as follows.
\begin{enumerate}
\item We set $(a^*, b^*)$ be such that $0 = \inf_{b > a^*} \Lambda(a^*,b) = \Lambda(a^*,b^*)$. Hence, $\mathfrak{C}_a$ holds. If in addition,  $b \mapsto \lambda(a^*, b)$ is continuous at $b^*$, then $\mathfrak{C}_b'$ also holds as well.
\item We set $a^* = \underline{a}$ and $b^* = \infty$.  By \eqref{limit_Gamma_a_b}, $\lim_{b \rightarrow \infty}\Lambda(a^*,b)/W^{(q)}(b-a^*) = 0$, or equivalently $\mathfrak{C}_a$ holds.
\end{enumerate}

\begin{remark} \label{remark_Lambda_positive_a_b}
In Examples \ref{example_capital_injection_dual} and \ref{example_two_sided_control}, by construction, $\Lambda(a^*,x) \geq 0$ for $x \in [a^*, b^*]$.
\end{remark}

\subsection{Variational inequalities and verification} \label{verification_singular}
Below, we shall focus on the case $a^* \in int(\mathcal{I})$ and hence $\mathfrak{C}_a$ is satisfied (this excludes Example \ref{example_capital_injection}): the
value function becomes, by \eqref{expression_v_a_b}, for all $x \leq b^*$,
\begin{align} \label{expression_value_function}
\begin{split}
v_{a^*,b^*} (x) 
&= -C_U R^{(q)}(x-a^*) + \frac {f(a^*)} q Z^{(q)} (x-a^*) - \varphi_{a^*}(x; f) \\
&= -C_U \Big( \frac {\psi'(0+)} q  + x \Big)+ \frac {\tilde{f}(a^*)} q Z^{(q)} (x-a^*) -  \varphi_{a^*} (x;\tilde{f}).
\end{split}
\end{align}
By \eqref{about_gamma} and \eqref{v_derivative_general},
\begin{align}
v_{a^*,b^*}'(x) = -\Lambda(a^*,x) + C_D, \quad a^* \leq x \leq b^*. \label{derivative_value_function}
\end{align}

The verification of optimality requires that our candidate value function $v_{a^*, b^*}$ satisfies the variational inequalities: 
\begin{align} \label{VI_system_double_reflected}
\begin{split}
&(\mathcal{L}-q) v_{a^*,b^*}(x) + f(x)\geq 0,  \quad x \in \mathcal{I}^o, \\
& \min ( v'_{a^*, b^*}(x) + C_U, C_D - v'_{a^*, b^*}(x)) \geq 0, \quad x \in (-\infty, \overline{\mathcal{I}}], \\
&[(\mathcal{L}-q) v_{a^*,b^*}(x) + f(x)]  \min ( v'_{a^*, b^*}(x) + C_U, C_D - v'_{a^*, b^*}(x))= 0, \quad x \in \mathcal{I}^o.
\end{split}
\end{align}
Notice that, when $\underline{\mathcal{I}} > -\infty$, the middle condition is required to hold for the extended set $(-\infty, \overline{\mathcal{I}}]$  because $X$ can jump instantaneously to the region $(-\infty, \underline{\mathcal{I}})$ (and then immediately pushed up to $\mathcal{I}$).  Here, the generator $\mathcal{L} v_{a^*, b^*}$ makes sense due to the smoothness obtained above of $v_{a^*, b^*}$ and because $v_{a^*, b^*}$ is linear below $a^*$ and Assumption \ref{finiteness_X_1} is given.

In order to show that these are sufficient conditions for optimality, in general we need additional assumptions on the tail property of $f$ and the \lev measure.   This is necessary because verification arguments  first localize in order to use It\^o's formula.  After the localization arguments, one needs to  interchange the limits over expectations.  To this end, it is typically required that $|f|$ only increases moderately and/or the \lev measure does not have a heavy tail.

Showing \eqref{VI_system_double_reflected} is the main challenge and the proof needs to be customized for each problem.  However, some inequalities of \eqref{VI_system_double_reflected} are easily shown without strong assumptions on the function $f$.

\begin{lemma} \label{lemma_easy_results_doubly_reflected}Suppose $\mathfrak{C}_a$ holds.
\begin{enumerate}
\item We have $(\mathcal{L}-q)v_{a^*, b^*}(x) + f(x)= 0$ for $a^* < x < b^*$.  
\item  If Assumption \ref{assump_f_tilde} holds with $a^* \leq \bar{a}$, then $(\mathcal{L}-q)v_{a^*,b^*}(x) + f(x) \geq 0$ on $(-\infty, a^*)$.
\item  If  $\Lambda(a^*,x) \geq 0$ for $x \in [a^*, b^*]$, then $v_{a^*,b^*}'(x) \leq C_D$ on $(-\infty, \overline{\mathcal{I}}]$. 
\end{enumerate}
\end{lemma}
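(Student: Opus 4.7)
The plan is to handle each item by directly exploiting the scale-function representation \eqref{expression_value_function} of $v_{a^*,b^*}$, combined with the martingale identities listed in Section \ref{subsection_martigale_properties}. For (1), rewrite
\[ v_{a^*,b^*}(x) = -C_U R^{(q)}(x-a^*) + \frac{f(a^*)}{q}\, Z^{(q)}(x-a^*) - \varphi_{a^*}(x;f). \]
Each summand is annihilated by $\mathcal{L}-q$ on $(a^*, b^*)$ up to the contribution of $f$: identity \eqref{martingale_Z_R} together with the translation invariance of $\mathcal{L}$ gives $(\mathcal{L}-q)R^{(q)}(x-a^*) = (\mathcal{L}-q)Z^{(q)}(x-a^*)=0$ for $x > a^*$, while the identity stated at the end of Section \ref{subsection_martigale_properties} gives $(\mathcal{L}-q)\varphi_{a^*}(x;f) = f(x)$ for $x > a^*$. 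Summing yields $(\mathcal{L}-q)v_{a^*,b^*}(x) = -f(x)$ on $(a^*,b^*)$, which is (1).

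For (2), the starting observation is that for $x \leq a^*$ the formula \eqref{expression_value_function} collapses: because $W^{(q)}$ vanishes on $(-\infty,0)$, we have $\varphi_{a^*}(x;f)=0$ and $Z^{(q)}(x-a^*)=1$, and from \eqref{z_below_zero} also $\overline{Z}^{(q)}(x-a^*)=x-a^*$, so that
\[ v_{a^*,b^*}(x) = -C_U x + C_U a^* - \frac{C_U\psi'(0+)}{q} + \frac{f(a^*)}{q}, \qquad x \leq a^*. \]
In particular $v_{a^*,b^*}$ is affine on $(-\infty, a^*]$. The core computation is then $(\mathcal{L}-q)$ applied to an affine function. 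Using $\mathcal{L}(y\mapsto y) = \psi'(0+)$ (finite by Assumption \ref{finiteness_X_1}) and the crucial fact that $x+z \leq a^*$ for all $z<0$ when $x \leq a^*$ (so the integrand in \eqref{generator} is evaluated only in the affine region), one obtains after cancellation
\[ (\mathcal{L}-q)v_{a^*,b^*}(x) + f(x) = \tilde f(x) - \tilde f(a^*), \qquad x < a^*. \]
The hypothesis $a^* \leq \bar a$ combined with Definition \ref{def_a_bar} applied to $\tilde f'$ yields $\tilde f' \leq 0$ on $(-\infty,a^*)$, so $\tilde f$ is nonincreasing on $(-\infty,a^*]$, and the right-hand side is nonnegative.

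Part (3) is a short case analysis on the location of $x$. On $[a^*,b^*]$, identity \eqref{derivative_value_function} together with the hypothesis $\Lambda(a^*,x) \geq 0$ gives $v'_{a^*,b^*}(x) = C_D - \Lambda(a^*,x) \leq C_D$. On $(-\infty,a^*)$, the affine representation from the previous paragraph gives $v'_{a^*,b^*}(x) = -C_U$, and the standing assumption \eqref{assump_C_sum} guarantees $-C_U \leq C_D$. Finally, on $(b^*, \overline{\mathcal{I}}]$ the value function is extended linearly below \eqref{expression_v_a_b} with slope $C_D$, so $v'_{a^*,b^*}(x) = C_D$.

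The main obstacle is expected to be the bookkeeping in (2): one must carefully track the $\psi'(0+)/q$ pieces appearing both inside $R^{(q)}$ and from $\mathcal{L}(y\mapsto y)$, and verify that the large-jump tail ($z \leq -1$) of the L\'evy integral in \eqref{generator} combines with the drift $\gamma$ to reproduce $\psi'(0+)$. Once these pieces line up the final identity $\tilde f(x) - \tilde f(a^*)$ falls out cleanly, and the sign conclusion is immediate from the defining property of $\bar a$.
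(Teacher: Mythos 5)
Your proof is correct and follows essentially the same route as the paper: part (1) via the martingale identities for $Z^{(q)}$, $R^{(q)}$ and $\varphi_{a^*}(\cdot;f)$ applied to the first equality of \eqref{expression_value_function}, part (2) via the affine form of $v_{a^*,b^*}$ below $a^*$ reducing the generator to $\tilde f(x)-\tilde f(a^*)$, and part (3) via the same three-case analysis of the slope. You simply spell out more of the bookkeeping (the $\psi'(0+)$ cancellation and the fact that downward jumps from $x\le a^*$ stay in the affine region) than the paper's terse proof does.
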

\begin{proof}
(1) This is immediate by the results summarized in Section \ref{subsection_martigale_properties} in view of the first equality of \eqref{expression_value_function}.

%

(2) By the second equality of \eqref{expression_value_function}, $v_{a^*, b^*}(x) =  [-C_U {\psi'(0+)}  + {\tilde{f}(a^*)}] / q- C_U x$, for $x < a^*$, and hence $(\mathcal{L}-q)v_{a^*, b^*}(x) + f(x) = \tilde{f}(x) - \tilde{f}(a^*)$.
This is positive by $x \leq a^* < \overline{a}$ and by how $\overline{a}$ is chosen.

(3) In view of \eqref{derivative_value_function}, this inequality holds for $x \in [a^*, b^*]$.  For $x \in (-\infty, a^*)$, we have  $v_{a^*,b^*}'(x) = - C_U$, which is smaller than $C_D$ by \eqref{assump_C_sum}.  Finally, for $x \in (b^*, \infty) \cap \mathcal{I}$, we have  $v_{a^*,b^*}'(x) = C_D$.
\end{proof}

 \begin{figure}[htbp]
\begin{center}
\begin{minipage}{1.0\textwidth}
\centering
\begin{tabular}{c}
 \includegraphics[scale=0.4]{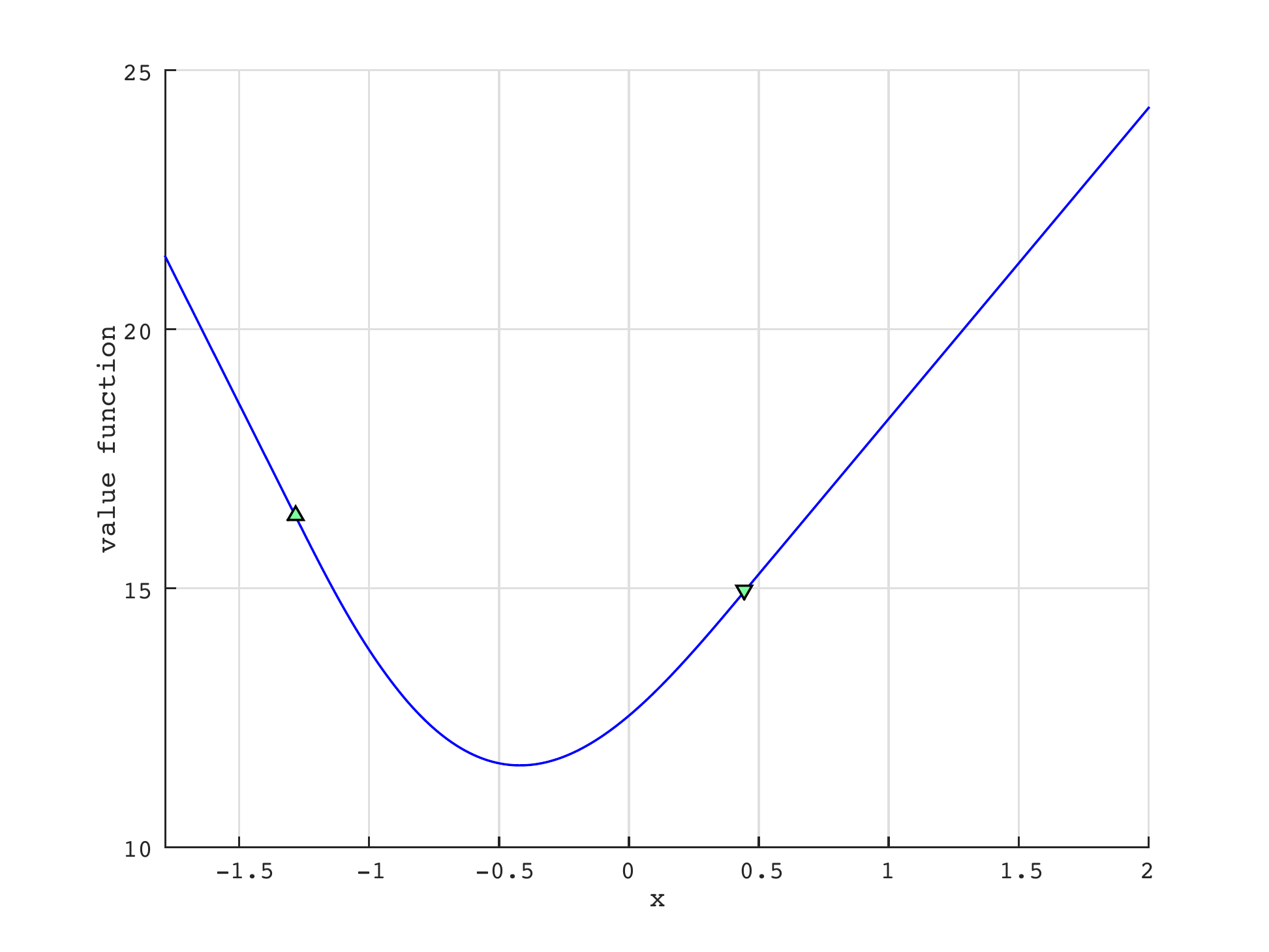}
 \end{tabular}
\end{minipage}
\caption{A sample plot of the value function for Example \ref{example_two_sided_control} when $X$ is of unbounded variation. The up-pointing and down-pointing triangles show the points at $a^*$ and $b^*$, respectively. It can be confirmed that it is twice differentiable at $a^*$ and $b^*$.}  
\end{center}
\end{figure}

For Examples \ref{example_capital_injection_dual} and \ref{example_two_sided_control}, by the fact that $a^* < \bar{a}$ as discussed in Sections \ref{subsection_example_capital_injection} and \ref{subsection_example_capital_injection_dual}, and also by Remark \ref{remark_Lambda_positive_a_b}, the conditions in Lemma \ref{lemma_easy_results_doubly_reflected} hold. Hence, the only pieces left to show in \eqref{VI_system_double_reflected} are 
\begin{enumerate}
\item[(1')] $-C_U \leq v_{a^*,b^*}'(x)$ for all $x \in (a^*,b^*)$,
\item[(2')] $(\mathcal{L}-q)v_{a^*, b^*}(x) + f(x) \geq 0$ for $x  \in (b^*, \infty) \cap \mathcal{I}^o$.  
\end{enumerate}
These conditions unfortunately do not hold generally and must be checked individually.  Here we give brief illustrations on how these hold for Examples \ref{example_capital_injection_dual} and \ref{example_two_sided_control}.

In Example \ref{example_capital_injection_dual}, (1') holds immediately because, with $C_U = -1 < 0$, 
\begin{align*}
v_{a^*,b^*}'(x) = -C_U Z^{(q)}(x-a^*) \geq -C_U. 
\end{align*}
In addition, (2') holds trivially because $(b^*, \infty) \cap \mathcal{I}^o = \varnothing$. 

In Example \ref{example_two_sided_control}, 
thanks to the assumption that $f$ is convex, $x \mapsto \Lambda(a^*, x)$ is first decreasing and decreasing (see Figure \ref{Gamma_plot_quadratic}).  This together with \eqref{derivative_value_function} and the smoothness at $a^*$ and $b^*$, the function $v_{a^*, b^*}$ is convex on $\R$ and hence (1') holds.

The hardest part is to show (2'); the difficulty comes from the fact that the process can jump from $(b^*, \infty)$ to the regions $(-\infty, a^*)$ and $(a^*, b^*)$ where the form of $v_{a^*, b^*}$ changes.  In \cite{Baurdoux_Yamazaki_2015} under the convexity assumption, they use contradiction arguments  similar to \cite{Hernandez_Yamazaki_2013, Loeffen_2008}, where they show, for $x > b^*$,
\begin{align}
(\mathcal{L}-q) (v_{a^*,b^*}-v_{a(x),x})(x-) := \lim_{y \uparrow x}(\mathcal{L}-q) (v_{a^*,b^*}-v_{a(x),x})(y) \geq 0, \label{generator_positive_hypothesis}
\end{align}
where $a(x)$ is the unique value of $a$ such that $\Lambda(a, x) = 0$.
This implies (2') because if both \eqref{generator_positive_hypothesis} and $(\mathcal{L}-q) v_{a^*,b^*}(x) + f(x)< 0$ hold simultaneously, then
\begin{align*}
0 > (\mathcal{L}-q) v_{a^*,b^*}(x) + f(x) \geq (\mathcal{L}-q) v_{a(x),x}(x-) + f(x),
\end{align*}
which contradicts with $ (\mathcal{L}-q) v_{a(x),x}(x-) + f(x) = 0$ that can be shown similarly to Lemma \ref{lemma_easy_results_doubly_reflected}(1).
The proof depends heavily on the convexity of $f$, with which the function $y \mapsto \Lambda(x,y)$ is first decreasing and then increasing. We refer the reader to \cite{Baurdoux_Yamazaki_2015} for more careful analysis.

We conclude this section with a summary of the functions and parameters that played key roles in Examples  \ref{example_capital_injection_dual} and \ref{example_two_sided_control}.   Some similarities and differences with the problems to be considered in later sections can be seen by comparing with Tables \ref{table_impulse} and \ref{table_game} below. 
\begin{table}[ht]
\centering
\begin{tabular}{l l}
\hline
$\Lambda(a,b)$ &$:= C_D + C_U Z^{(q)}(b-a)$ \\
$\tilde{f}'(b)$ & $:=C_U q$ \\ [0.5ex] 
\hline
$a^*$& $:=a^*$ of $(a^*, 0)$ such that  $\mathfrak{C}_a^0$ holds \\
$< \bar{a}$& $:=0=\bar{a}(\tilde{f}')$  \\
$= b^*$ & $:=0 = \overline{\mathcal{I}}$  \\
\hline
\end{tabular}  \\
Example \ref{example_capital_injection_dual}  
\\ \vspace{0.3cm}
%
\begin{tabular}{l l}
\hline
$\Lambda(a,b)$ &$:= C_D + C_U + \varphi_a (b;\tilde{f}')$  \\
$\tilde{f}'(b)$ & $:=f'(b) +C_U q$ \\ [0.5ex] 
\hline
$\underline{a}$& $:=\underline{a}(\tilde{f}')$ \\
$\leq a^*$& $:=a^*$ of $(a^*, b^*)$ such that  $\mathfrak{C}_a$ and $\mathfrak{C}_b$ hold simultaneously\\
$< \bar{a}$& $:=\bar{a}(\tilde{f}')$  \\
$< b^*$ & $:=b^*$ of $(a^*, b^*)$ such that  $\mathfrak{C}_a$ and $\mathfrak{C}_b$ hold simultaneously  \\
\hline
\end{tabular} \\
Example \ref{example_two_sided_control} 
\vspace{0.3cm}
\caption{Summary of the key functions and parameters in Examples  \ref{example_capital_injection_dual} and \ref{example_two_sided_control}. For Example \ref{example_two_sided_control},  when $b^* = \infty$, $a^* =\underline{a}$.}
\label{table_two_sided}
\end{table}

\section{Impulse Control} \label{section_impulse_control}

In impulse control, a strategy $\pi := \left\{ U_t^{\pi}; t \geq 0 \right\}$ is given by $U_t^\pi = \sum_{i: T_i^{\pi} \leq t} u_i^\pi$, $t \geq0$, where $\{ T_i^\pi; i \geq 1 \}$ is an increasing sequence of $\mathbb{F}$-stopping times and $u_i^\pi$,  for $i \geq 1$, is an $\F_{T_i^\pi}$-measurable random variable such that $u_i^\pi \in \mathcal{A}$, $i \geq 1$, a.s.\ for some $\mathcal{A} \subset \R$.

The corresponding controlled process is given by $Y^\pi = \{Y_t^\pi; t \geq 0 \}$ where $Y_{0-}^\pi=0$ and
\begin{align*}
Y_t^\pi := X_t + U_t^\pi, \quad t \geq 0.
\end{align*}
The time horizon is given by $T_{\mathcal{I}^c}^\pi := \inf \{ t > 0: Y_t^\pi \notin \mathcal{I} \}$ for some given closed interval $\mathcal{I}$
and $U^\pi$ must be such that
\begin{align}
Y_{t-}^\pi + \Delta U_t^\pi \in \mathcal{I}, \quad 0 \leq t \leq T_{\mathcal{I}^c}^\pi \quad a.s. \label{admissibility_dividend}
\end{align}
Let $\Pi$ be the set of all admissible strategies.

With $f$, some continuous and piecewise continuously differentiable function  on $\mathcal{I}$, and $q > 0$,
the problem is to compute the value function
\begin{align*}
v(x) := \inf_{\pi \in \Pi}v^\pi (x) 
\end{align*}
where
\begin{align*}
v^\pi (x) 
&:= \E_x \Big[ \int_0^{T_{\mathcal{I}^c}^\pi} e^{-qt} f (Y^{\pi}_{t}) \diff t +  \sum_{0 \leq t \leq T_{\mathcal{I}^c}^\pi} e^{-q t} [C_U | \Delta U_t^{\pi} | +K] 1_{\{ |\Delta U_t^{\pi}| > 0 \}}\Big], \quad x \in \R, 
\end{align*}
and to obtain an admissible strategy that minimizes it, if such a strategy exists.  
The constant $C_U$ is the \emph{proportional cost}, which is not necessarily restricted to be a positive value.  On the other hand, 
$K$ is the \emph{fixed cost} and must be strictly positive.  Again in this section, we assume Assumption \ref{finiteness_X_1} (note that this is not necessarily needed for Example \ref{example_dividedn_fixed_costs} below).


\begin{example} \label{example_dividedn_fixed_costs}In the optimal dividend problem with fixed costs driven by a spectrally negative \lev process, each time dividend is paid, a fixed cost $K$ is incurred.  In addition, the problem is terminated at ruin (i.e.\ $\mathcal{I} = [0, \infty)$).  The condition \eqref{admissibility_dividend} means that one cannot pay more than the remaining surplus.

The objective is to maximize the total expected discounted dividends minus that for fixed costs.  We can formulate this as a minimization problem as above by setting $C_U = -1$,  $U_t^\pi$ being the negative of the cumulative amount of dividends until $t \geq 0$, and $\mathcal{A} = (-\infty, 0)$.  Here, $f$ is assumed to be zero.
  This problem has been solved by Loeffen \cite{Loeffen_2009_2}  for a spectrally negative \lev process with a completely monotone \lev density.
\end{example}

\begin{example} \label{example_dividedn_fixed_costs_dual} In the dual model of Example \ref{example_dividedn_fixed_costs}, it is assumed that the underlying process is a spectrally positive \lev process.
By flipping the processes  with respect to the origin, it is easy to see that it is equivalent to the above formulation driven by a spectrally negative \lev process with $\mathcal{A} = (0, \infty)$, $\mathcal{I}= (-\infty, 0]$ and $C_U = -1$. This problem has been solved by  Bayraktar et al.\ \cite{Bayraktar_2013} for a general spectrally positive \lev process.
%
%
\end{example}

\begin{example} \label{example_inventory_control}Continuous-time inventory control often uses this model.  Here, the function $f$ corresponds to the cost of holding and shortage when $x > 0$ and $x < 0$, respectively.  With the assumption that backorders are allowed, the problem is infinite-horizon ($\mathcal{I} = \R$).
Bensoussan et al.\ \cite{Bensoussan_2009, Bensoussan_2005} considered the case of a spectrally negative compound Poisson process perturbed by a Brownian motion with $\mathcal{A} = (0, \infty)$.  It has been generalized by Yamazaki \cite{Yamazaki_2013} to a general spectrally negative \lev model.  As in Example \ref{example_two_sided_control}, we assume that $f$ is convex.
\end{example}

\subsection{The $(s,S)$-strategy} With the fixed cost $K > 0$ incurred each time the control $U^\pi$ is activated, it is clear that the reflection strategy is no longer feasible; instead one needs to solve the tradeoff between controlling the process and minimizing the number of activation of $U^\pi$.  In this sense, the $(s,S)$-strategy is a natural candidate for an optimal strategy: whenever the process goes below (resp.\ above) a level $s$, it pushes the process up (resp.\ down) to $S$ when $s < S$ (resp.\ $S < s$).

Suppose $\pi^{s,S} := \{ U^{s,S}_t; t \geq 0 \}$ is the $(s,S)$-strategy, and $Y^{s,S}$ and $T^{s,S}_{\mathcal{I}^c}$ are the corresponding controlled process and the termination time, respectively.  By using the results summarized in Section \ref{fluctuations_underlying}, it is a simple exercise to compute the corresponding expected NPV of costs:
\begin{align} \label{def_v_c}
v_{s,S} (x) 
&:= \E_x \Big[ \int_0^{T^{s,S}_{\mathcal{I}^c}} e^{-qt} f (Y^{s,S}_{t}) \diff t +  \sum_{0 \leq t \leq T^{s,S}_{\mathcal{I}^c}} e^{-q t} [C_U | \Delta U_t^{s,S}| +K] 1_{\{ |\Delta U_t^{s,S}| > 0 \}}\Big], \quad x \in \R.
\end{align}
To see this, for the case $s < S$, it is noted (from the construction of the process $Y^{s,S}$) that  $\p_x$-a.s.,  $Y_t^{s,S} = X_t$ for $0 \leq t < T_s^-$ 
 and $\Delta U_{T^-_s}^{s,S}   = S- X_{T^-_s}$ on $\{ T^-_s < T_{\mathcal{I}^c}^{s,S} \}$.
By these and  the strong Markov property of  $Y^{s,S}$,
the expectation \eqref{def_v_c} must satisfy, for every  $x > s$,
\begin{align}
v_{s,S} (x) &= \E_x \Big[ \int_0^{T_{s}^- \wedge T^{s,S}_{\mathcal{I}^c}} e^{-qt}f(X_{t}) \diff t \Big] + \E_x \left[ e^{-q T_{s}^-}  (C_U (S - X_{T_{s}^-})+K) 1_{\{T_s^- < T^{s,S}_{\mathcal{I}^c}\}}  \right] \\ &+ \E_x \left[ e^{-q T_{s}^-} 1_{\{T_s^- < T^{s,S}_{\mathcal{I}^c} \}}   \right] v_{s,S}(S).   \label{v_recursion}
\end{align}
Here the expectations on the right hand side can be computed by the identities given in Section \ref{section_review_levy}.
By setting $x = S$ on both sides, we can solve for $v_{s,S}(S)$; substituting this back in, we obtain $v_{s,S}(x)$ for $x \in \R$.  In particular, for the computation when $\mathcal{I} = \R$, see \eqref{QVI_system} below.

The case $s > S$ is even simpler because then there is no overshoot at the time it reaches $s$:  we have, for $x < s$,
\begin{align*}
v_{s,S} (x) &= \E_x \Big[ \int_0^{T_{s}^+ \wedge T^{s,S}_{\mathcal{I}^c}} e^{-qt}f(X_{t}) \diff t \Big] 
+ \E_x \left[ e^{-q T_{s}^+} 1_{\{T_s^+ < T^{s,S}_{\mathcal{I}^c} \}}  \right] [v_{s,S}(S) + C_U (s-S)+K].   
\end{align*}
We can similarly obtain first $v_{s,S}(S)$ and then, by substituting this back in, $v_{s,S}(x)$, for $x \in \R$. See, e.g., \cite{Loeffen_2009_2} for explicit expressions when $f \equiv 0$.

\begin{remark} The same technique can be used to compute also the two-sided extension (i.e.\ $\mathcal{A} = \R \backslash \{0\}$) of the $(s,S)$-strategy: in this case, the strategy is specified by four parameters, say, $(d,D,U,u)$. The controller pushes the process up to $D$ as soon as it goes below $d$
and pushes down to $U$ as soon as it goes above $u$, while he does not intervene whenever it is within the set $(d, u)$.  See \cite{Yamazaki_cash} for the fluctuation identities.
\end{remark}

\subsection{Smoothness of the value function}  \label{smoothness_impulse}
Focusing on the set of $(s,S)$-strategies, the first step again is to narrow down to a candidate optimal strategy by deciding on the values of $s$ and $S$, which we call $s^*$ and $S^*$.  Again, as there are two values to be identified, naturally we need two equations to identify these.

\begin{enumerate}
\item As is clear from what we have seen in the previous section, the value function is expected to satisfy some continuity/smoothness at the point $s^*$.  In comparison to the case of singular control, \emph{the degree of smoothness is decreased by one} in the case of impulse control.  This can be summarized as follows:

When $s^* < S^*$ (where $v_{s^*,S^*}$  is linear below $s^*$ and hence $v_{s^*,S^*}'(s^*-) = -C_U$),
\begin{enumerate}
\item if $s^*$ is regular for $(-\infty, s^*)$ (or equivalently $X$ is of unbounded variation), then the continuous differentiability at $s^*$ is expected;
\item if $s^*$ is irregular for $(-\infty, s^*)$ (or equivalently $X$ is of bounded variation), then the continuity at $s^*$ is expected.
\end{enumerate}

When $s^* > S^*$ (where $v_{s^*,S^*}$  is linear above $s^*$ and hence $v_{s^*,S^*}'(s^*+) = C_U$), because $s^*$ is regular for $(s^*, \infty)$ for any spectrally negative \lev process, the  continuous differentiability at $s^*$ is expected. 

It is noted that alternatively one can use the first-order condition on $s^*$ so that $\partial v_{s,S}/ \partial s |_{s = s^*, S= S^*}$ vanishes: we typically arrive at the same equation.
\item The other equation can be obtained by what we postulate at the point $S^*$. This is less intuitive than (1). However, if we consider the first-order condition at $S^*$ so that $\partial v_{s,S}/ \partial S |_{s=s^*, S=S^*}$ vanishes, easy computation derives that it tends to be equivalent to the condition $v_{s^*,S^*}'(S^*) = -C_U$ (resp.\ $v_{s^*,S^*}'(S^*) = C_U$) when $s^* < S^*$ (resp.\ $s^* > S^*$). 
\end{enumerate}

From the above discussions, when $s^* < S^*$, except for the case $X$ is of bounded variation, we arrive at the function that satisfies
\begin{align*}
v_{s^*,S^*}'(s^*) = v_{s^*,S^*}'(S^*) = -C_U.
\end{align*}
Due to this fact, it is often easier if we deal  with a modified function 
\begin{align}
\tilde{v}_{s,S}(x) := v_{s,S}(x) + C_U x; \label{def_v_tilde}
\end{align}
by this, some terms tend to disappear and computation gets simplified.
  When $S^* < s^*$, then the sign of the coefficient of $C_U$ is flipped.

In impulse control, while the two equations that identify the two unknown parameters $(s^*,S^*)$ are slightly different from the singular control case for $(a^*,b^*)$ as in Section  \ref{smoothness_double_barrier}, we shall see that these two equations possess a similar relation to those obtained for $(a^*,b^*)$.  Namely, the desired pair $(s^*,S^*)$  is such that a function of two variables and its partial derivative with respect to one of the parameters vanish simultaneously.

\subsubsection{The case of Example \ref{example_inventory_control}} \label{inventory_existence_example}
For Example \ref{example_inventory_control}, we shall see that the desired $(s^*, S^*)$ are those $(s,S)$ such that 
\begin{align}
\mathfrak{C}_s&: \frac {\Lambda(s,S)} {\overline{\Theta}^{(q)}(S-s)} = 0,  \label{G_zero}\\
\mathfrak{C}_S&: \frac {\Theta^{(q)}(S-s)} {\overline{\Theta}^{(q)}(S-s)}\Lambda(s,S)  -\lambda(s,S) = 0, \label{H_zero}
\end{align}
where $\Theta^{(q)}$ is as defined in \eqref{def_theta} with its antiderivative $\overline{\Theta}^{(q)}$ given by 
\begin{align*}
\overline{\Theta}^{(q)}(x) &:= W^{(q)}(x)    - \Phi(q) \overline{W}^{(q)}(x) > 0,
\end{align*}
and 
\begin{align}  
\label{def_G}
\Lambda(s,x) 
&:= \Phi(q)   \Psi(s;\tilde{f}) \overline{W}^{(q)}(x-s) + K -  \varphi_s(x; \tilde{f}), \quad x, s \in \R, \\ \label{def_H}
\lambda(s,x) &:= \frac \partial {\partial x} \Lambda(s,x), \quad x > s.
\end{align} 

Here, we shall confirm briefly how this is so.  Note that when $\mathfrak{C}_s$ is satisfied, then $\mathfrak{C}_S$ is equivalent to the condition:
\begin{align}
\mathfrak{C}_S'&: \lambda(s,S) = 0. \label{H_zero_simple}
\end{align}
\begin{remark}
We note the similarity between $\mathfrak{C}_s$ and $\mathfrak{C}_S$ (or $\mathfrak{C}'_S$) with the conditions $\mathfrak{C}_a$ and $\mathfrak{C}_b$ (or $\mathfrak{C}'_b$) as in \eqref{smoothness_condition1}, \eqref{smoothness_condition2} (or \eqref{smoothness_condition2_prime}) in the two-sided singular control case. \end{remark}

First, by using the technique (using the equation \eqref{v_recursion}) discussed above, we can compute \eqref{def_v_tilde}: for all $s < S$, \begin{align}
\begin{split}
\tilde{v}_{s,S} (S)   &= \frac  {\Phi(q)} {q  \overline{\Theta}^{(q)}(S-s)}  \left[ \overline{\Theta}^{(q)}(S-s) \left[ \Psi(s;\tilde{f}) - \frac q {\Phi(q)} \left( K + \frac {C_U \psi'(0+)} q \right) \right] + \Lambda(s,S) \right], \\
\tilde{v}_{s,S} (x) 
&=  \left\{ \begin{array}{ll} - \frac {\overline{\Theta}^{(q)}(x-s)} {\overline{\Theta}^{(q)}(S-s)}\Lambda(s,S)  + \Lambda(s,x)  + \tilde{v}_{s,S}(S), & x \geq s, \\
K +  \tilde{v}_{s,S}(S), & x < s. \end{array} \right. 
\end{split} \label{v_tilde_with_G}
\end{align}

Differentiating \eqref{v_tilde_with_G},
\begin{align} \label{derivative_with_G}
\tilde{v}_{s,S}' (x) 
&= - \frac {\Theta^{(q)}(x-s)} {\overline{\Theta}^{(q)}(S-s)}\Lambda(s,S)  + \lambda(s,x), \quad  s <  x < S.
\end{align}

From these expressions, we shall see that the conditions $\mathfrak{C}_s$ and $\mathfrak{C}_S$ as in \eqref{G_zero} and \eqref{H_zero} guarantee the desired smoothness/slope conditions described above: namely,
\begin{enumerate}
\item $\tilde{v}_{s^*,S^*}(\cdot)$ is continuous (resp.\ differentiable) at $s^*$ when $X$ is of bounded (resp.\ unbounded) variation, 
\item $\tilde{v}_{s^*,S^*}' (S^*) = 0$.
\end{enumerate}

(1) Regarding the continuity at $s$, by \eqref{v_tilde_with_G},
\begin{align*}
\tilde{v}_{s,S} (s+) 
&= - \frac {\overline{\Theta}^{(q)}(0)} {\overline{\Theta}^{(q)}(S-s)}\Lambda(s,S)  +K + \tilde{v}_{s,S}(S) = - \frac {\overline{\Theta}^{(q)}(0)} {\overline{\Theta}^{(q)}(S-s)}\Lambda(s,S)  + \tilde{v}_{s,S}(s-),
\end{align*}
where $\overline{\Theta}^{(q)}(0) = 0$ if and only if $X$ is of unbounded variation in view of  \eqref{eq:Wq0}.  Hence, the continuity at $x = s$ holds if and only if $\mathfrak{C}_s$ holds for the case of bounded variation. On the other hand, it holds automatically for the unbounded variation case. 

For the case of unbounded variation, we further pursue the differentiability at $x=s$.  The equation \eqref{derivative_with_G} gives $\tilde{v}_{s,S}' (s+) = - \frac {\Theta^{(q)}(0)} {\overline{\Theta}^{(q)}(S-s)}\Lambda(s,S)$, and hence $\mathfrak{C}_s$ leads to the differentiability at $s$.


(2) Regarding the slope condition at $S$, we have $\tilde{v}_{s,S}' (S) 
=   - \frac {\Theta^{(q)}(S-s)} {\overline{\Theta}^{(q)}(S-s)}\Lambda(s,S)  + \lambda(s,S)$.
Hence $\mathfrak{C}_S$ guarantees $\tilde{v}_{s,S}' (S) = 0$ as desired.




\underline{Existence of $(s^*, S^*)$}: 
We now illustrate how the existence of $(s^*, S^*)$ that satisfy $\mathfrak{C}_s$ and $\mathfrak{C}_S$ can be shown.  
Here, as in Example \ref{example_two_sided_control}, we shall assume Assumption \ref{assump_f_tilde}: then,
\begin{align*}
\underline{a} \equiv \underline{a}(\tilde{f}') \quad \textrm{and} \quad \bar{a} \equiv \bar{a}(\tilde{f}')
\end{align*}
are well-defined and finite as in the discussion given in Section \ref{two_sided_control_existence_example}.

We shall see that the desired $s^*$ lies on the left of $\underline{a}$ while $S^*$ lies on its right.  As $K$ decreases, the distance between $s^*$ and $S^*$ is expected to shrink and converge to $\underline{a}$, which is the optimal barrier in Example \ref{example_two_sided_control} for the case $b^* =\infty$.

To show the existence of $(s^*, S^*)$, we shall first write
\begin{align} \label{G_H_different_expressions}
\begin{split}
\Lambda(s,S)  &=  \int_{s}^{S}  \Psi(y; \tilde{f}') \overline{\Theta}^{(q)}(S-y)  \diff y  + K,  \quad s, S \in \R, \\
\lambda(s,S)
&= \Psi(S; \tilde{f}')  W^{(q)}(0)  + \int_{s}^S \Psi(y; \tilde{f}')  \Theta^{(q)}(S-y)   \diff y, \quad S > s.
\end{split}
\end{align}
In Figure \ref{G_plot}, we show sample plots of the functions $S \mapsto \Lambda(s,S)$ and $S \mapsto \lambda(s,S)$ for several values of starting points $s$, including $\underline{a}$ and $a^*$.  

As can be confirmed by the figure and also clear from \eqref{G_H_different_expressions}, by how $\underline{a}$ is chosen, we have the following properties:
\begin{enumerate}
\item When $s > \underline{a}$, $\lambda(s,S) > 0$ for $S > s$ and hence $S \mapsto \Lambda(s,S)$ is monotonically increasing on $[s, \infty)$.
\item When $s < \underline{a}$, $\partial \Lambda(s,S) / {\partial s}  =  -\Psi(s; \tilde{f}') \overline{\Theta}^{(q)}(S-s) \geq 0$ by how $\underline{a}$ is chosen.
\item For every fixed $s \in \R$,  $\lim_{S \uparrow \infty}\Lambda(s,S) = \infty$.  \item For every fixed $S \in \R$,  $\lim_{s \downarrow -\infty}\Lambda(s,S) = -\infty$.
\item For any $s \in \R$, $\Lambda(s,s) = K > 0$. 
\end{enumerate}

\begin{figure}[htbp]
\begin{center}
\begin{minipage}{1.0\textwidth}
\centering
\begin{tabular}{cc}
 \includegraphics[scale=0.4]{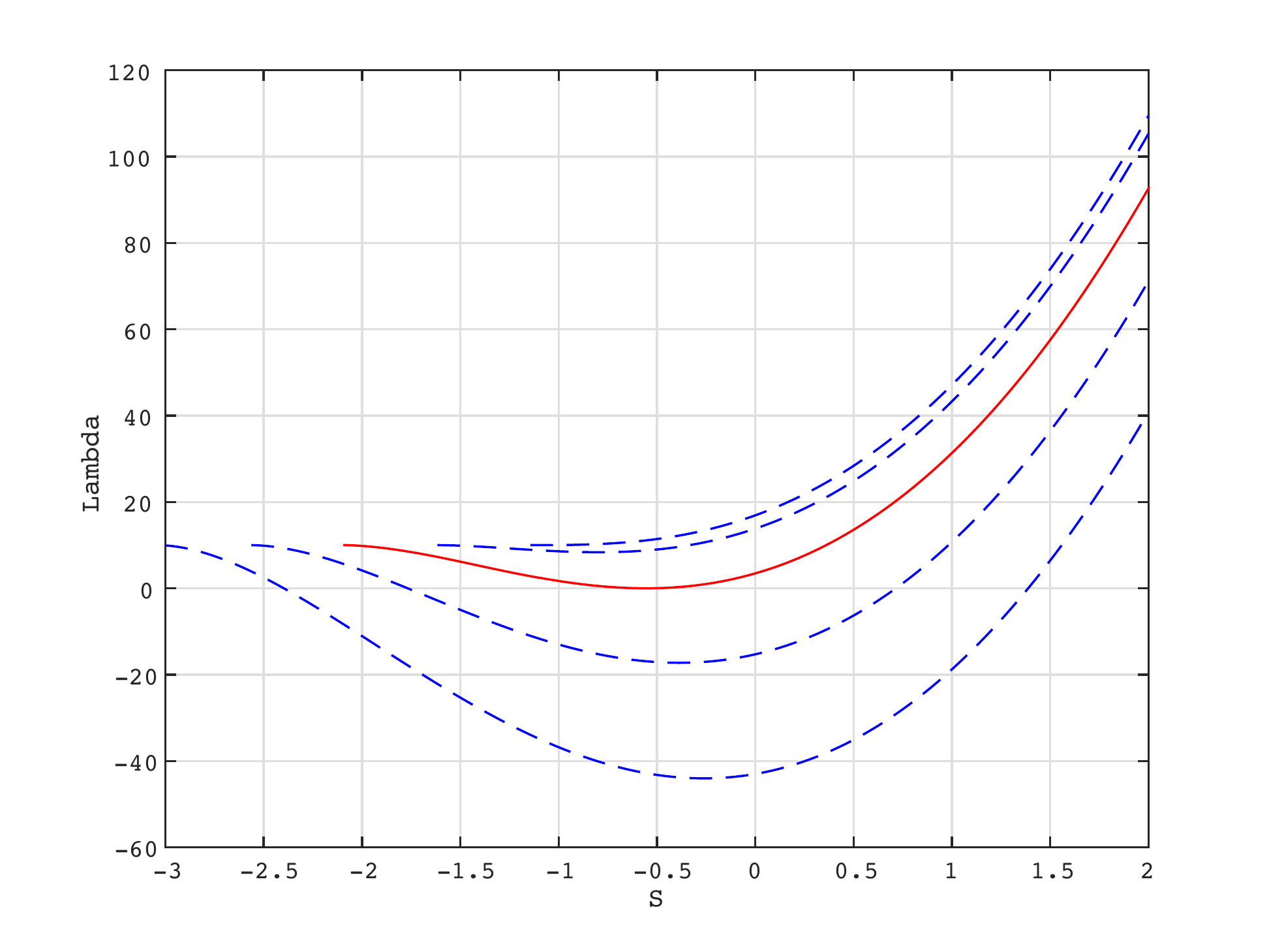} & \includegraphics[scale=0.4]{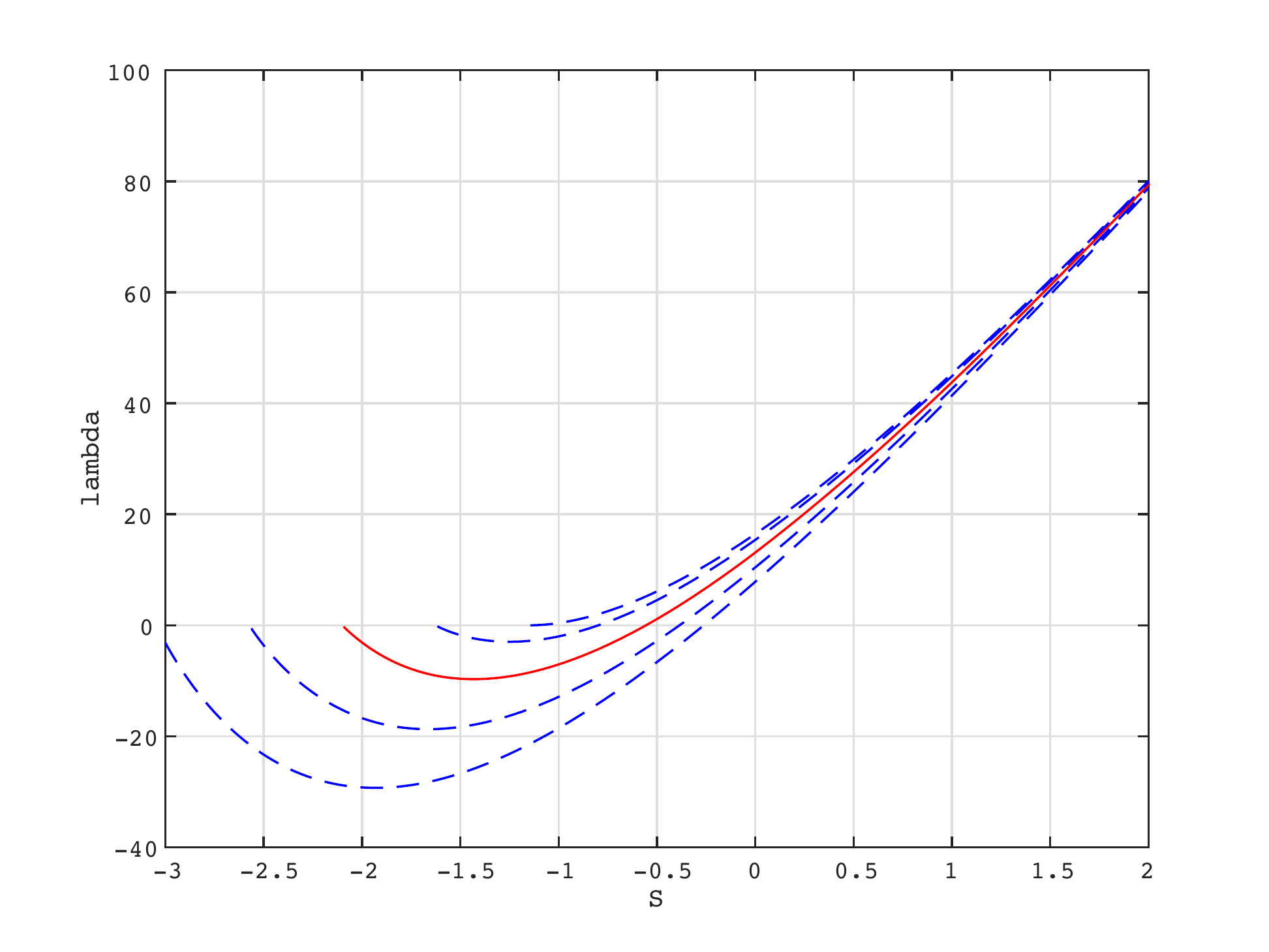}  \\
 $S \mapsto \Lambda(s,S)$ & $S \mapsto \lambda(s,S)$
\end{tabular}
\end{minipage}
\caption{Existence of $(s^*, S^*)$ for Example \ref{example_inventory_control}. Plots of $S \mapsto \Lambda(s, S)$ and $S \mapsto \lambda(s, S)$ on $[s, \infty)$ for five values of $s$ are shown.    The line in red corresponds to the one for $s = s^*$; the point at which $\Lambda(s^*, \cdot)$ is tangent to the x-axis becomes $S^*$. The rightmost curve corresponds to the one with $s = \underline{a}$; it is confirmed that $\Lambda(\underline{a}, \cdot)$ is monotonically increasing and $\lambda(\underline{a}, \cdot)$ is uniformly positive.}  \label{G_plot}
\end{center}
\end{figure}

It is now clear how to obtain the desired $(s^*, S^*)$.  Similarly to Example \ref{example_two_sided_control}, starting at $s = \underline{a}$, we decrease the value of $s$ until we arrive at $s^*$ such that $\inf_{S > s^*} \Lambda(s^*,S)=0$. This exists because the function $s \mapsto \inf_{S > s}\Lambda(s,S)$, $s < \underline{a}$, is increasing by the property (2) above and goes to $-\infty$ as $s \downarrow -\infty$ by the property (4). Note that, because  \eqref{G_H_different_expressions} implies $\lambda(s^*,S) < 0$ for $S \in (s^*, \underline{a})$, we must have $S^* > \underline{a}$.
Because $\inf_{S > s^*} \Lambda(s^*,S)=0$ attains a local minimum at $S=S^*$, we must have $\lambda(s^*,S^*) = \Lambda(s^*,S^*) = 0$, as desired.

\subsubsection{Brief remarks on the cases of Examples \ref{example_dividedn_fixed_costs} and \ref{example_dividedn_fixed_costs_dual}.}  In \cite{Loeffen_2009_2} and \cite{Bayraktar_2013}, they use the first-order conditions to obtain $(s^*, S^*)$ in Examples \ref{example_dividedn_fixed_costs} and \ref{example_dividedn_fixed_costs_dual}, respectively.  To this end, they used the argument that the surface $(s,S) \mapsto v_{s,S}(x)$ has a global minimum (if formulated as a minimization problem).

The difficulty in their case is that because $\mathcal{I}$ has a finite boundary $0$, it can happen that $S^*$ (or both $s^*$ and $S^*$) is zero.  This means that the $(s^*, S^*)$-strategy, once activated, moves the controlled process to the default boundary.  In Example \ref{example_dividedn_fixed_costs_dual} where $0$ is regular for ${\mathcal{I}}^c = (0, \infty)$, ruin then occurs immediately.  On the other hand, in Example \ref{example_dividedn_fixed_costs}, it is regular for $\mathcal{I}^c = (-\infty, 0)$ if and only if $X$ is of unbounded variation. Hence, while ruin occurs immediately for the unbounded variation case, it stays above $0$ for a positive amount of time a.s.  This suggests one difficulty in solving the spectrally negative \lev case.

If $S^* \neq 0$, then for both problems,  the slope condition $v_{s^*, S^*}' (S^*) = - C_U = 1$ (resp.\ $v_{s^*, S^*}' (S^*) =  C_U = -1$) is satisfied for Example \ref{example_dividedn_fixed_costs_dual} (resp.\ Example \ref{example_dividedn_fixed_costs}).  Similarly, if $s^* \neq 0$, then
the smoothness condition  $v_{s^*, S^*}' (s^*) = - C_U = 1$ (resp.\ $v_{s^*, S^*}' (s^*) =  C_U = -1$) is satisfied for Example \ref{example_dividedn_fixed_costs_dual} (resp.\ Example \ref{example_dividedn_fixed_costs}).

\subsection{Quasi-variational inequalities and verification}  

The verification of optimality requires that the candidate value function $v_{s^*, S^*}$ satisfies the QVI (quasi-variational inequalities):
\begin{align} \label{QVI_system}
\begin{split}
&(\mathcal{L}-q) v_{s^*,S^*}(x) + f(x)\geq 0,  \quad x \in \mathcal{I}^o \backslash \{s^*\},\\
&v_{s^*,S^*}(x) \leq K + \inf_{u \in \mathcal{A}, x+ u \in \mathcal{I}} \left[ C_U |u| + v_{s^*,S^*}(x+u) \right],  \quad x \in \mathcal{I}, \\
&[(\mathcal{L}-q) v_{s^*,S^*}(x) + f(x)] \big[v_{s^*,S^*}(x) - K - \inf_{u \in \mathcal{A}, x+ u \in \mathcal{I}} \left[ C_U |u| + v_{s^*,S^*}(x+u) \right] \big] = 0, \quad x \in \mathcal{I}^o \backslash \{s^*\}.
\end{split}
\end{align}
For its proof, 
 see \cite{Bensoussan_Lions_1984, Bensoussan_2005}.  Similarly to the singular control case, in general we need additional assumptions on the tail growth of $f$ and the \lev measure. In particular, in  \cite{Bensoussan_2005, Yamazaki_2013}, it is assumed that the growth of $f$ in the tail is at most polynomial.
 
 \subsubsection{The case of Example \ref{example_inventory_control}}

With $(s^*, S^*)$ that satisfy $\mathfrak{C}_s$, the function \eqref{v_tilde_with_G} simplifies to, for $x \in \R$,
\begin{align} 
\tilde{v}_{s^*,S^*}(S^*) &= \frac {\Phi(q)} q \Psi(s^*; \tilde{f})  - K  - \frac  {C_U \psi'(0+)} q, \label{v_tilde_final_fixed} \\
\tilde{v}_{s^*,S^*} (x) 
&= \Lambda(s^*,x)  + \tilde{v}_{s^*,S^*} (S^*),  \label{v_tilde_final}
\end{align}
or equivalently
\begin{align} \label{v_tilde_optimal}
\begin{split}
v_{s^*,S^*} (x) 
&= \left( \frac {\Phi(q)} q  \Psi(s^*; f) + \frac {C_U} {\Phi(q)}\right) Z^{(q)}(x-s^*)   - C_U R^{(q)}(x-s^*) - \varphi_{s^*} (x; f). 
\end{split}
\end{align}
See Figure \ref{fig_value_function_impulse} for a sample plot of $v_{s^*, S^*}$.

 \begin{figure}[htbp]
\begin{center}
\begin{minipage}{1.0\textwidth}
\centering
\begin{tabular}{c}
 \includegraphics[scale=0.4]{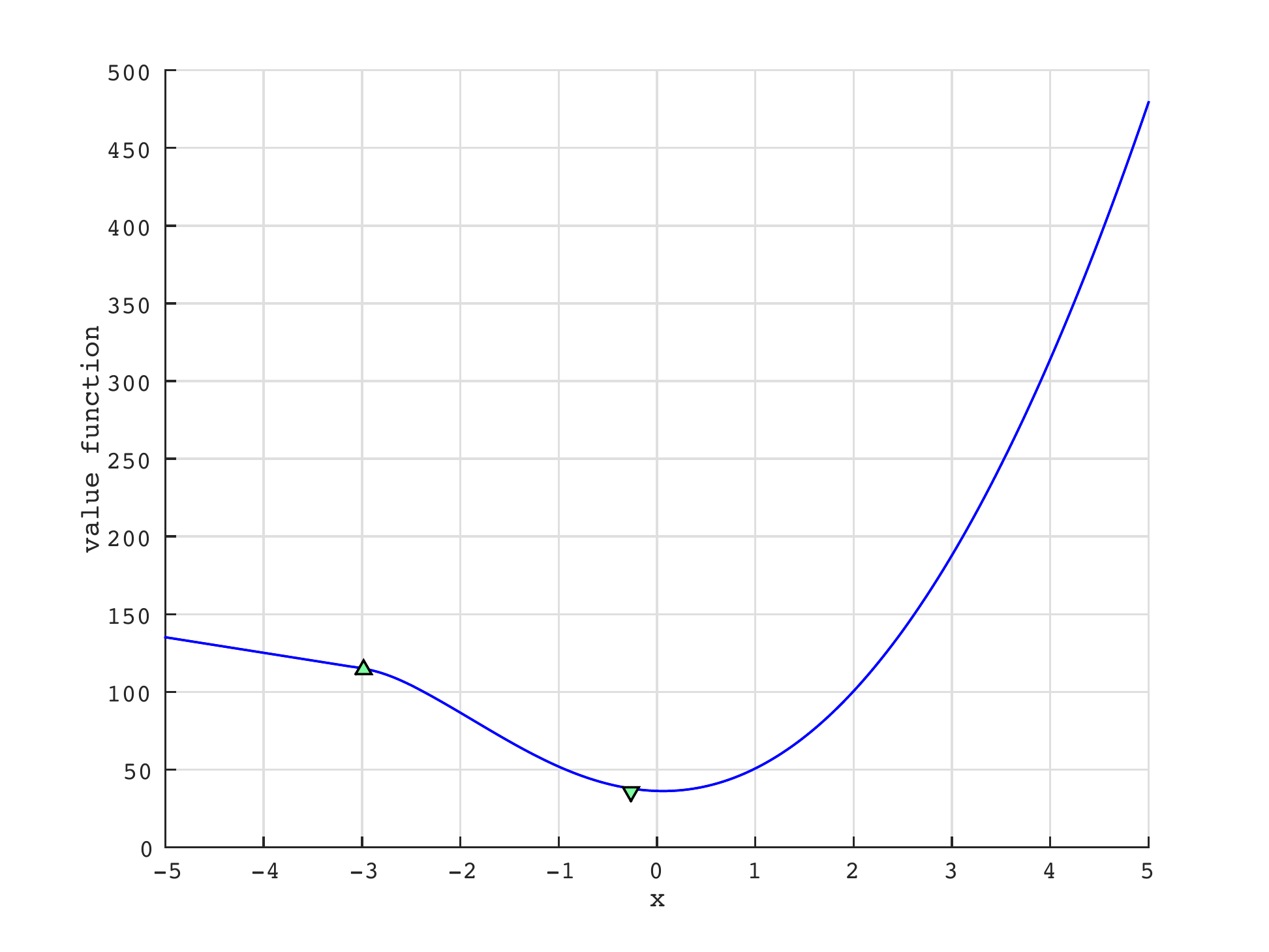}
 \end{tabular}
\end{minipage}
\caption{A sample plot of the value function $v_{s^*, S^*}$ for Example \ref{example_inventory_control} when $X$ is of unbounded variation. The up-pointing and down-pointing triangles show the points at $s^*$ and $S^*$, respectively.}  \label{fig_value_function_impulse}
\end{center}
\end{figure}
 
 Similarly to the singular control case (see Lemma \ref{lemma_easy_results_doubly_reflected}), some inequalities of \eqref{QVI_system} are easily shown with minor  assumptions on the function $f$.

\begin{lemma} \label{lemma_easy_results_impulse}Suppose $\mathfrak{C}_s$ holds.
\begin{enumerate}
\item We have $(\mathcal{L}-q)v_{s^*, S^*}(x) + f(x)= 0$ for $x > s^*$.  
\item  If Assumption \ref{assump_f_tilde} holds and $\underline{a}$ is well-defined and finite with $s^* \leq \underline{a} < \bar{a}$, then $(\mathcal{L}-q)v_{s^*,S^*}(x) + f(x) \geq 0$ on $(-\infty, s^*)$.
\end{enumerate}
\end{lemma}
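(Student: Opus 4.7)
Part (1) follows from a direct application of $(\mathcal{L}-q)$ to the closed form \eqref{v_tilde_optimal}. Crucially, that formula represents $v_{s^*,S^*}$ on all of $\R$ and not merely on $[s^*,\infty)$: it is obtained from \eqref{v_tilde_final} via the substitution $\tilde f = f + C_Uq\cdot\mathrm{id}$, and because $\Lambda(s^*,x) = K$ for $x < s^*$, it correctly reproduces the linear branch of \eqref{v_tilde_with_G} below $s^*$. With the formula in hand, the martingale identities \eqref{martingale_Z_R} give $(\mathcal{L}-q)Z^{(q)}(x-s^*) = (\mathcal{L}-q)R^{(q)}(x-s^*) = 0$ for $x>s^*$, and the identity at the end of Section \ref{subsection_martigale_properties} gives $(\mathcal{L}-q)\varphi_{s^*}(x;f) = f(x)$ for $x>s^*$. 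Linearity assembles these into $(\mathcal{L}-q)v_{s^*,S^*}(x) = -f(x)$, proving (1).

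Part (2) is the heart of the lemma. For $x<s^*$, equation \eqref{v_tilde_with_G} reduces $v_{s^*,S^*}$ to the affine expression $v_{s^*,S^*}(x) = [K + \tilde v_{s^*,S^*}(S^*) + C_US^*] - C_Ux$, and because $X$ has only negative jumps every evaluation point $x+z$ in the generator integral remains in the same affine branch. A direct calculation, whose only delicate ingredient is Assumption \ref{finiteness_X_1} (used to control the integrability of $z$ in the large-jump tail), yields $\mathcal{L}v_{s^*,S^*}(x) = -C_U\psi'(0+)$. Substituting the explicit value of $\tilde v_{s^*,S^*}(S^*)$ from \eqref{v_tilde_final_fixed} and collecting terms produces the pivotal identity
\begin{align*}
(\mathcal{L}-q)v_{s^*,S^*}(x) + f(x) \;=\; \tilde f(x) - \Phi(q)\,\Psi(s^*;\tilde f), \qquad x<s^*.
\end{align*}

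To control the sign I would integrate by parts in the defining integral $\Psi(s^*;\tilde f) = \int_{s^*}^{\infty} e^{-\Phi(q)(y-s^*)}\tilde f(y)\,\diff y$ to obtain the key relation $\Phi(q)\Psi(s^*;\tilde f) = \tilde f(s^*) + \Psi(s^*;\tilde f')$, which rewrites the right-hand side above as $\tilde f(x) - \tilde f(s^*) - \Psi(s^*;\tilde f')$. The two sign facts now come from $s^* \leq \underline a < \bar a$: Definition \ref{def_a_bar_under} forces $\Psi(s^*;\tilde f') \leq 0$, while Definition \ref{def_a_bar} makes $\tilde f'$ strictly negative on $(-\infty,\bar a) \supset (-\infty,s^*]$, so $\tilde f$ is nonincreasing on $(-\infty,s^*]$ and $\tilde f(x) \geq \tilde f(s^*)$ for every $x<s^*$. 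Adding the two inequalities yields the required nonnegativity.

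The main technical obstacle I anticipate is the boundary term at $+\infty$ in the integration by parts: its vanishing requires a growth control such as $\tilde f(y)\,e^{-\Phi(q) y} \to 0$ as $y\to\infty$. This condition is not spelled out in the statement of the lemma itself, but it is implicit in the very finiteness of $\Psi(s^*;\tilde f)$ and is automatic under the polynomial-growth convention on $f$ used in \cite{Yamazaki_2013}. A secondary subtlety is that $\tilde f$ is only piecewise $C^1$, so the integration by parts should be performed on each smooth piece and assembled, with $\tilde f'$ interpreted as the right-hand derivative at the countable set of corners — this does not affect the signs above. All remaining steps amount to bookkeeping with the scale-function identities already collected in Sections \ref{section_review_levy} and \ref{section_impulse_control}.
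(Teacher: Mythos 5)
Your proof is correct and follows essentially the same route as the paper's: part (1) is the harmonicity identities of Section \ref{subsection_martigale_properties} applied to the representation \eqref{v_tilde_optimal}, and part (2) computes the generator on the affine branch below $s^*$ and reduces, via \eqref{v_tilde_final_fixed} and integration by parts, to $\tilde f(x)-\tilde f(s^*)-\Psi(s^*;\tilde f')\geq 0$, whose sign follows from how $\underline a$ and $\bar a$ are chosen. Your remarks on the vanishing boundary term at infinity and the piecewise differentiability of $\tilde f$ are sensible refinements of points the paper leaves implicit.
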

\begin{proof}
(1) In view of \eqref{v_tilde_optimal}, this is immediate by the results summarized in Section \ref{subsection_martigale_properties}.

%

(2) 
Because $\tilde{v}_{s^*, S^*}(x) = K + \tilde{v}_{s^*, S^*} (S^*)$ for $x < s^*$ and by  \eqref{v_tilde_final_fixed},
\begin{align*}
(\mathcal{L}-q) v_{s^*,S^*}(x) + f(x)  = - q (K + \tilde{v}_{s^*, S^*} (S^*)) - C_U\psi'(0+) + C_Uq x + f(x) = \tilde{f}(x) - \tilde{f}(s^*) - \Psi(s^*; \tilde{f}').
\end{align*}
This is positive by $x < s^* < \underline{a} \leq \bar{a}$ and how $\underline{a}$ and $\bar{a}$ are chosen.
\end{proof}

%


In view of Lemma \ref{lemma_easy_results_impulse}, the remaining task is to show that
\begin{align} \label{verification_bensoussan_difficult}
\begin{split}
v_{s^*,S^*}(x) &= K + \inf_{u \geq 0} \left[ C_Uu + v_{s^*,S^*}(x+u) \right], \quad x \leq s^*, \\
v_{s^*,S^*}(x) &\leq K + \inf_{u \geq 0} \left[ C_U u + v_{s^*,S^*}(x+u) \right], \quad x > s^*.
\end{split}
\end{align}
These can be shown for $x \leq \underline{a}$ easily as follows.
For $x \leq s^*$, in view of \eqref{v_tilde_final} and because $S^*$ minimizes $\Lambda(s^*,x)$ over $x \in \R$, we must have 
\begin{align} \label{v_s_S_minimum}
\tilde{v}_{s^*,S^*}(S^*) = \inf_{x \in \R} \tilde{v}_{s^*,S^*}(x).
\end{align}
Hence, 
\begin{align}
\tilde{v}_{s^*,S^*}(x)  = \tilde{v}_{s^*,S^*}(s^*) = \tilde{v}_{s^*,S^*}(S^*)+K = K + \inf_{u \geq 0} \left[ C_U u + v_{s^*,S^*}(x+u) \right], \quad x \leq s^*. \label{v_s_S_less_than_s}
\end{align}
The case $s^* \leq x \leq \underline{a}$ also holds by \eqref{v_s_S_minimum} and because $\tilde{v}_{s^*,S^*}' (x) = \lambda(s^*, x) < 0$ on $[s^*, \underline{a}]$ in view of how $\underline{a}$ is chosen and \eqref{G_H_different_expressions}.    

Unfortunately, the proof of \eqref{verification_bensoussan_difficult} for $x > \underline{a}$ is difficult and, we need a nonstandard technique.  As the fluctuation theory and scale function do not simplify the proof to our best knowledge, it is out of scope of this note.  We refer the reader to the proof of Theorem 1(iii) of Benkherouf and Bensoussan \cite{Bensoussan_2009}.

Below, we summarize the functions and parameters that played important roles in characterizing the optimal solution in Examples  \ref{example_inventory_control}.

\begin{table}[ht]
\centering
\begin{tabular}{l l}
\hline
$\Lambda(a,b)$ &$:= \Phi(q)   \Psi(s;\tilde{f}) \overline{W}^{(q)}(S-s) + K -  \varphi_s(S; \tilde{f})$  \\ $\tilde{f}'(b)$ & $:=f'(b) +C_U q$ \\ [0.5ex] 
\hline
$s^*$& $:=s^*$ of $(s^*, S^*)$ such that  $\mathfrak{C}_s$ and $\mathfrak{C}_S$ hold simultaneously\\
$< \underline{a}$& $:=\underline{a}(\tilde{f}')$ \\
$< S^*$ & $:=S^*$ of $(s^*, S^*)$ such that  $\mathfrak{C}_s$ and $\mathfrak{C}_S$ hold simultaneously  \\
\hline
\end{tabular} \vspace{0.3cm}
\caption{Summary of the key functions and parameters in Example \ref{example_inventory_control}. It can be shown that $s^*, S^* \rightarrow \underline{a}$ as $K \downarrow 0$.}
\label{table_impulse}
\end{table}

\subsubsection{Brief remarks on the cases of Examples \ref{example_dividedn_fixed_costs} and \ref{example_dividedn_fixed_costs_dual}.
} \label{remark_other_impulse} As in the singular control case, verification is in general harder for the spectrally negative case than for the spectrally positive case.  

For Example \ref{example_dividedn_fixed_costs_dual}, the variational inequalities \eqref{QVI_system}  can be shown without much difficulty.  Similarly to Example \ref{example_inventory_control} above, the generator part of \eqref{QVI_system} holds trivially; this is due to the fact that in this case the controlling region is $(-\infty, s^*)$ and the waiting region is $(s^*, 0]$; the process does not jump from the former to the latter and hence the results similar to Lemma \ref{lemma_easy_results_impulse} hold.  The other parts of \eqref{QVI_system} can be shown using the log-concavity of the scale function as in Section \ref{section_log_concavity}, which essentially shows that $-v_{s^*, S^*}'(x) < -C_U$ if and only if $x \in (s^*, S^*)$; see Lemma 5.3 of \cite{Bayraktar_2013}.

On the other hand, the verification for Example \ref{example_dividedn_fixed_costs} can only be done for a subset of spectrally negative \lev processes.  This is again due to the fact, in this case, that the controlling region is $(s^*, \infty)$ and the waiting region is $[0,s^*)$; the process can jump from the former to the latter, where the form of $v_{s^*, S^*}$ changes.

\section{Zero-sum games between two-players} \label{section_game}

In this section, we consider optimal stopping games between two players: the \emph{inf player} and the \emph{sup player}, whose strategies are given by stopping times $\theta$ and $\tau$, respectively.
Here, a common expected payoff is minimized by the former and is maximized by the latter.  
The problem is terminated at the time either of the two players decides to stop or
at the first exit time from some closed interval $\mathcal{I}$:
\[T_{\mathcal{I}^c}:=\inf\{\,t >  0\,:\, X_t \notin \mathcal{I} \,\}.\]
Without loss of generality, these can be assumed to satisfy
\begin{align}
\theta, \tau \leq T_{\mathcal{I}^c}, \quad a.s. \label{tau_sigma_bounded}
\end{align}

Let $q > 0$ be the discount factor and  the terminal payoff be given by
\begin{enumerate}
\item $g_I$: when the inf player stops first,
\item $g_S$:  when the sup player stops first,
\item $g$: when both players stop simultaneously (including the case $\theta = \tau = T_{\mathcal{I}^c}$),
\end{enumerate}
such that $g(x) = 0$ for $x \notin \mathcal{I}$.
Then given any pair of strategies
$(\theta, \tau)$, the expected cost (resp.\ reward) for the inf (resp.\ sup) player is
\begin{align}
v(x; \theta, \tau) :=\E_x \Big[  1_{\{\theta < \tau \}} e^{- q \theta} g_I(X_\theta)  + 1_{\{\tau < \theta \}} e^{- q \tau} g_S(X_\tau)  + 1_{\{\tau = \theta < \infty  \}} e^{- q \tau} g(X_\tau)   
\Big].\label{def_V}
\end{align} 
The objective is to determine, if it exists, a pair of stopping times $(\theta^*, \tau^*)\subset \mathcal{S}$, called the \emph{saddle point}, that constitutes the \emph{Nash equilibrium}:
\begin{align}
v(x; \theta^*, \tau) \leq v(x; \theta^*, \tau^*)  \leq v(x; \theta, \tau^*), \quad \forall \, \theta, \tau \in \S, \label{saddle_pt}
\end{align}
where $\mathcal{S}$ is  the set of stopping times satisfying \eqref{tau_sigma_bounded}.

\begin{example} \label{example_CDS_game}Egami et al.\ \cite{Leung_Yamazaki_2011} considered several games in the setting of a credit default swap (CDS) contract as extensions to the optimal stopping problem considered in Leung and Yamazaki \cite{leung_yamazaki_2013}.

As in a usual perpetual CDS contract, the sup player (protection buyer) pays premium continuously and whenever the default event $\{X < 0 \}$ happens, the sup player receives from the inf player (seller) a fixed default payment $1$, and the contract is terminated. 

In their \emph{cancellation game}, they added a feature that the sup player and inf player both have an option to cancel the contract before default  for a fee, whoever cancels first.  
Specifically, 
\begin{enumerate}
\item the  sup player begins by paying premium at rate $p$ over time  for a notional amount $1$ to be paid at default;
\item prior to default, the sup player and the inf player can select a time to cancel the contract;
\item when the sup player cancels, he is incurred the fee  $\costb$ to be paid to the inf player; when the inf player cancels, he is incurred $\costs$ to be paid to the sup player;
\item if the
sup player and the inf player exercise simultaneously, then  both pay the fee upon exercise. 
\end{enumerate}
For the game to make sense, these parameters are assumed to satisfy 
\begin{align}
\acheck >  \costs\geq 0, \quad \pcheck >0, \quad \costb +\costs >0.
\end{align}

Namely, the inf player wants to minimize while the sup player wants to maximize the common expectation:
\begin{multline}
V(x; \theta, \tau) :=\E_x \left[ -\int_0^{\tau \wedge \theta} e^{-qt} p\,\diff t  \right. \\ \left. + 1_{\{\tau \wedge \theta < \infty\}}\bigg(  e^{-q T_{(-\infty,0)}} 1_{\{{\tau = \theta} =T_{(-\infty,0)} \}} + 1_{\{\tau \wedge \theta < T_{(-\infty,0)} \}} e^{-q (\tau \wedge \theta)}  \left(- \costb 1_{\{\tau \leq \theta \}} +  \costs 1_{\{\tau \geq \theta \}}  \right) \bigg) \right], \label{def_V_cds}
\end{multline} 
by choosing stopping times $\theta$ and $\tau$, respectively.

Let \begin{align}
 C(x;p) &:=\E_x \left[ -\int_0^{{T_{(-\infty,0)}} } e^{-qt} p\, \diff t +  e^{-q {T_{(-\infty,0)}}}     \right] =  \left(\frac{p}{q} +1\right) \lap(x) -\frac{p}{q}, \quad x > 0, \label{cds} 
\end{align}
where, by \eqref{first_passage_time},  
\begin{align*}
\lap(x) &:= \E_x \left[  e^{-q{T_{(-\infty,0)}}}\right] = Z^{(q)}(x) -  \frac q {\Phi(q)} W^{(q)}(x), \quad x \in  \R. 
\end{align*}
Then, by the strong Markov property, \eqref{def_V_cds} can be written
\begin{align*}V(x; \theta, \tau) &= C(x;p)+ v(x; \theta, \tau), \quad x > 0, \end{align*}
where \begin{align}
v(x; \theta, \tau) &:= \E_x \left[ e^{-q (\tau\wedge \theta)} \left( g_S(X_{\tau}) 1_{\{\tau < \theta \}} + g_I(X_{ \theta}) 1_{\{\tau > \theta \}} + g(X_{\tau }) 1_{\{\tau = \theta \}} \right) 1_{\{\tau \wedge \theta < \infty\}} \right], \label{definition_v}
\end{align}
 with, for $x \in \R$, 
\begin{align}\label{hx}g_S(x) &:=  1_{\{x > 0 \}}  \Big[ \Big(\frac{p}{q} - \costb \Big) - \Big(\frac{p}{q} +1 \Big) \lap(x)\Big], \\
\label{gx}g_I(x) &:= 1_{\{x > 0 \}}  \Big[ \Big(\frac{p}{q} +\costs \Big) - \Big(\frac{p}{q} +1\Big) \lap(x)\Big], \\
\label{fx}g(x) &:= 1_{\{x > 0 \}}  \Big[ \Big(\frac{p}{q} - \costb +\costs \Big) - \Big(\frac{p}{q} +1\Big) \lap(x)\Big].
\end{align}
In other words, the problem is to identify the pair of strategies $(\theta^*, \tau^*)$ such that \eqref{saddle_pt} holds.

%
%

\end{example}

\subsection{Threshold strategies}  
If the (common) payoff functions have some monotonicity with respect to the position of $X$ as in the examples given in Section \ref{section_one_parameter}, it is expected that both implement threshold strategies where one of them stops when $X$ is sufficiently high while the other stops when it is sufficiently low.  Hence, it is a reasonable conjecture that the equilibrium is characterized by two boundaries: $\alpha < \beta$ or $\beta < \alpha$.

We shall now consider a pair of strategies $(\theta_\alpha, \tau_\beta)$ such that

\begin{enumerate}
\item if $\alpha < \beta$, then $\theta_\alpha := \inf \{ t > 0: X_t < \alpha \}$ and $\tau_\beta := \inf \{ t > 0: X_t > \beta \}$,
\item if $\beta < \alpha$, then $\theta_\alpha := \inf \{ t > 0: X_t > \alpha \}$ and $\tau_\beta := \inf \{ t > 0: X_t < \beta \}$.
\end{enumerate}
In order to satisfy the condition \eqref{tau_sigma_bounded}, we must have $\underline{\mathcal{I}} \leq  \alpha < \beta \leq \overline{\mathcal{I}}$ and $\underline{\mathcal{I}} \leq \beta < \alpha \leq \overline{\mathcal{I}}$ for (1) and (2), respectively.

In this case, the players' expected NPVs of reward/cost \eqref{def_V} becomes
\begin{align*}
v_{\alpha, \beta}(x) :=\E_x \Big[  1_{\{\theta_\alpha < \tau_\beta \}} e^{- q \theta_\alpha} g_I(X_{\theta_\alpha})  + 1_{\{\tau_\beta < \theta_\alpha \}} e^{- q \tau_\beta} g_S(X_{\tau_\beta})   \Big].
\end{align*} 
By the reviewed results in Section \ref{fluctuations_underlying}, this can be computed by the scale function and the \lev measure.  

Focusing on strategy pairs given by $(\theta_\alpha, \tau_\beta)$, the first step again is to choose a candidate barrier pair $(\alpha^*, \beta^*)$ using two equations.
The expected degree of smoothness is the same as the impulse control case (see Section \ref{smoothness_impulse})  and is one less than the singular control case (see Section \ref{smoothness_double_barrier}).   More precisely, we have the following for the case $\alpha^* < \beta^*$ (the case $\beta^* < \alpha^*$ holds in the same way by swapping the roles of $\alpha^*$ and $\beta^*$):
\begin{enumerate}
\item 
Regarding the smoothness of the value function at the lower barrier $\alpha^*$,
\begin{enumerate}
\item if $\alpha^*$ is regular for $(-\infty, \alpha^*)$ (or equivalently $X$ is of unbounded variation), then the continuous differentiability at $\alpha^*$ is expected;
\item if $\alpha^*$ is irregular for $(-\infty, \alpha^*)$ (or equivalently $X$ is of bounded variation), then the continuity at $\alpha^*$ is expected.
\end{enumerate}
\item Regarding the smoothness at the upper barrier $\beta^*$, because it is always regular for $(\beta^*, \infty)$, continuous differentiability is expected at $\beta^*$ regardless of the path variation.
\end{enumerate}

\subsubsection{The case of Example \ref{example_CDS_game} } \label{game_existence_example}

In the cancellation game, the sup player has an incentive to cancel the contract when default  is less likely, or equivalently when $X$ is sufficiently high. On the other hand, the inf player tends to  cancel it when default is likely to occur, or equivalently when $X$ is sufficiently small. Because $\mathcal{I} = [0, \infty)$, we can conjecture that the sup player and the inf player choose the strategies $\tau_{\beta^*}$ and $\theta_{\alpha^*}$ for some values $0 \leq \alpha^* < \beta^* \leq \infty$.  Regarding the cases $\alpha^* = 0$ and $\beta^* = \infty$, see the interpretations given in Remark \ref{remark_alpha_zero}. 

For $0<\alpha<x < \beta <\infty$, it is straightforward to write
\begin{align} \label{delta_by_upsilon}
\begin{split}
v_{\alpha,\beta}(x) - g_S(x)  &=\Upsilon(x;\alpha,\beta) - \frac p q  + \costb, \\
v_{\alpha,\beta}(x) - g_I(x)  &=\Upsilon(x;\alpha,\beta) - \frac p q -  \costs,
\end{split}
\end{align}
where
\begin{align}
\begin{split}
\Upsilon(x; \alpha, \beta) &:=  - \costb \E_x\left[  e^{-q (\theta_\alpha \wedge \tau_\beta)} 1_{\{\tau_\beta < \theta_\alpha \}} \right]  +\costs \E_x\left[  e^{-q (\theta_\alpha \wedge \tau_\beta)} 1_{\{\tau_\beta > \theta_\alpha \, \textrm{or} \;  \theta_\alpha = \tau_\beta = T_{(-\infty, 0)} \}} \right] \\ &\; -\costs \E_x\left[  e^{-q (\theta_\alpha \wedge \tau_\beta)}1_{\{\theta_\alpha = \tau_\beta = {T_{(-\infty, 0)}} \}} \right]. \end{split} \label{definition_upsilon}
\end{align}
By the results in Section \ref{fluctuations_underlying} together with the compensation formula (see Theorem 4.4 of  \cite{Kyprianou_2006}), we can write
\begin{align}
\begin{split}
\Upsilon(x;\alpha,\beta) &=  W^{(q)}(x-\alpha) \frac {\Lambda(\alpha,\beta)} {W^{(q)}(\beta-\alpha)} - \Lambda(\alpha,x) + \frac p q - \gamma_S 
, \quad \beta > x > \alpha > 0,
\end{split}  \label{upsilon_in_terms_of_scale_function}
\end{align}
 where, for $0 < \alpha < \beta < \infty$,
\begin{align}
\Lambda(\alpha,\beta) &:= \frac p q - \costb  - \Big(\frac p q + \costs \Big)  Z^{(q)} (\beta-\alpha) +  \frac {1-\costs} q \int_{(-\infty, -\alpha)}  \left( Z^{(q)}(\beta-\alpha) - Z^{(q)}(\beta+u) \right) \nu(\diff u) . \label{def_Psi} 
\end{align}
We also define the derivative of \eqref{def_Psi} as, for $0 < \alpha < \beta < \infty$,
\begin{align*}
\lambda(\alpha, \beta) &:= \frac \partial {\partial \beta}\Lambda(\alpha,\beta) \\ &=   - \left( \pcheck +\costs q \right) W^{(q)} (\beta-\alpha)   + \left( \acheck - \costs \right) \int_{(-\infty, -\alpha)}  \left( W^{(q)}(\beta-\alpha) - W^{(q)}(\beta+u) \right) \nu(\diff u) .
\end{align*}

We begin with establishing the continuous fit condition. First, by taking limits in \eqref{delta_by_upsilon}, we have, for $0 < \alpha < \beta < \infty$
\begin{align}
v_{\alpha,\beta}(\beta-)-g_S(\beta) &= \Upsilon(\beta-; \alpha, \beta) + \costb  =0, \label{eq_continuous_fit_B} \\
v_{\alpha,\beta}(\alpha+)-g_I(\alpha) &=W^{(q)}(0) \frac {\Lambda (\alpha,\beta)} {W^{(q)}(\beta-\alpha)}. \label{eq_continuous_fit_A}
\end{align}
This means that  continuous fit holds automatically at $\beta$. On the other hand, at $\alpha$, while continuous fit holds automatically for the case of unbounded variation, it holds if and only if 
\begin{align}
\mathfrak{C}_\alpha: \frac {\Lambda (\alpha,\beta)} {W^{(q)}(\beta-\alpha)} = 0 \label{cond_at_alpha}
\end{align} 
for the bounded variation case.

Now, by taking the derivative of \eqref{upsilon_in_terms_of_scale_function}, we obtain, for $\alpha < x < \beta$,
\begin{align*}
v_{\alpha,\beta}'(x+)-g_S'(x) = v_{\alpha,\beta}'(x+)-g_I'(x) = \Upsilon'(x+;\alpha,\beta) = W^{(q)\prime}((x-\alpha)+) \frac {\Lambda (\alpha,\beta)} {W^{(q)}(\beta-\alpha)}  - \lambda(\alpha,x). 
\end{align*}
Hence, the smooth fit at $\beta$ holds if and only if 
\begin{align*}
\mathfrak{C}_\beta:  W^{(q)\prime}((\beta-\alpha)-) \frac {\Lambda (\alpha,\beta)} {W^{(q)}(\beta-\alpha)}  - \lambda(\alpha, \beta)  = 0.
\end{align*}
Assuming that it has paths of unbounded variation ($W^{(q)}(0)=0$), then we obtain
\begin{align*}
v_{\alpha,\beta}'(\alpha+)-g'(\alpha)  &= W^{(q)\prime}(0+)  \frac {\Lambda (\alpha,\beta)} {W^{(q)}(\beta-\alpha)}, \quad 0 < \alpha < \beta.
\end{align*}
Therefore, $\mathfrak{C}_\alpha$ is also a sufficient condition for smooth fit at $\alpha$ for the unbounded variation case.  In addition, if $\mathfrak{C}_\alpha$ holds, then $\mathfrak{C}_\beta$ simplifies to
\begin{align*}
\mathfrak{C}_\beta':   \lambda(\alpha, \beta)  = 0.
\end{align*}

We conclude that
\begin{enumerate}
\item if $(\alpha^*, \beta^*)$ satisfy $\mathfrak{C}_\alpha$, then continuous fit at $\alpha^*$ holds for the bounded variation case and both continuous and smooth fit  at $\alpha^*$ holds for the unbounded variation case;
\item if $(\alpha^*, \beta^*)$ satisfy $\mathfrak{C}_\beta$, then both continuous and smooth fit conditions at $\beta^*$  hold for all cases.
\end{enumerate}
\begin{remark}
Note that the conditions $\mathfrak{C}_\alpha$ and $\mathfrak{C}_\beta$ (or $\mathfrak{C}'_\beta$) are the same as $\mathfrak{C}_a$ and $\mathfrak{C}_b$ (or $\mathfrak{C}_b'$) as in  \eqref{smoothness_condition1} and \eqref{smoothness_condition2} (or \eqref{smoothness_condition2_prime}) in the two-sided singular control case and are similar to $\mathfrak{C}_s$ and $\mathfrak{C}_S$ (or $\mathfrak{C}_S'$) as in \eqref{G_zero} and \eqref{H_zero} (or \eqref{H_zero_simple}) in the impulse control case, except that the form of $\Lambda$ is different.
\end{remark}



In order to show the existence of a pair that satisfy $\mathfrak{C}_\alpha$ and $\mathfrak{C}_\beta$, consider the function, for $0 < \alpha < \beta$,
\begin{align*}
\widehat{\lambda}(\alpha,\beta)  := \frac {\lambda (\alpha,\beta)} {W^{(q)}(\beta-\alpha)} = -\left( \pcheck + q \costs  \right) +  \big( \acheck - \costs \big)\int_{(-\infty, -\alpha)}  \left( 1 - \frac {W^{(q)}(\beta+u)} {W^{(q)}(\beta-\alpha)} \right) \nu (\diff u).
\end{align*}
By using the log-concavity of the scale function as in Section \ref{section_log_concavity}, the following can be easily derived.

\begin{lemma} \label{remark_hat_psi_monotonicity}
\begin{enumerate}
\item  For fixed $0 < \beta < \infty$, $\alpha \mapsto \widehat{\lambda}(\alpha,\beta)$ is decreasing  on $(0,\beta)$.
\item For fixed $\alpha > 0$, $\beta \mapsto \widehat{\lambda}(\alpha,\beta)$ is decreasing on $(\alpha,\infty)$.
\end{enumerate}
\end{lemma}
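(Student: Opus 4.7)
The plan is to read off both monotonicity claims directly from the integral representation
\begin{align*}
\widehat{\lambda}(\alpha,\beta) = -(\pcheck + q\costs) + (\acheck - \costs)\int_{(-\infty, -\alpha)} \Big(1 - \frac{W^{(q)}(\beta+u)}{W^{(q)}(\beta-\alpha)}\Big) \nu(\diff u),
\end{align*}
using two structural observations: the prefactor $\acheck - \costs > 0$, and for every $u \in (-\infty,-\alpha)$ one has $\beta+u < \beta-\alpha$, so the integrand is nonnegative because $W^{(q)}$ is nondecreasing. The only nontrivial input required will be the log-concavity of $W^{(q)}$ recalled in Section \ref{section_log_concavity}, and it will be used solely for part (2).

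For part (1), I would fix $\beta \in (0,\infty)$ and compare $\widehat{\lambda}(\alpha,\beta)$ at two values $\alpha < \alpha' < \beta$. Two monotonicities combine in the same direction. First, $\alpha' > \alpha$ gives $(-\infty, -\alpha') \subset (-\infty, -\alpha)$, so the integration domain shrinks. Second, for each fixed $u < -\alpha'$, increasing $\alpha$ decreases $\beta-\alpha$ and hence decreases $W^{(q)}(\beta-\alpha)$, which increases the ratio $W^{(q)}(\beta+u)/W^{(q)}(\beta-\alpha)$ and thus decreases the (nonnegative) integrand. Both effects reduce the integral, and multiplying by $\acheck - \costs > 0$ yields $\widehat{\lambda}(\alpha', \beta) \leq \widehat{\lambda}(\alpha, \beta)$. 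No log-concavity is needed here; only monotonicity of $W^{(q)}$ and a set-inclusion argument.

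For part (2), I would fix $\alpha > 0$ and note that this time the integration domain $(-\infty, -\alpha)$ is independent of $\beta$. It therefore suffices to show, for each fixed $u < -\alpha$, that $\beta \mapsto W^{(q)}(\beta+u)/W^{(q)}(\beta-\alpha)$ is nondecreasing on $(\alpha,\infty)$. Substituting $y := \beta - \alpha > 0$ and $z := -(u+\alpha) > 0$, this is equivalent to showing that $y \mapsto W^{(q)}(y-z)/W^{(q)}(y)$ is nondecreasing for each fixed $z > 0$, or equivalently that $y \mapsto \log W^{(q)}(y) - \log W^{(q)}(y-z)$ is nonincreasing. This is precisely the content of log-concavity: the inequality $W^{(q)\prime}(y+)/W^{(q)}(y) \leq W^{(q)\prime}(x+)/W^{(q)}(x)$ for $y > x > 0$ from Section \ref{section_log_concavity} says that the derivative of $\log W^{(q)}$ is nonincreasing, which integrated over any interval of length $z$ gives the desired comparison. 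Integrating the resulting pointwise inequality against $\nu(\diff u)$ and multiplying by the positive factor $\acheck - \costs$ completes the argument.

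The main step of substance is the log-concavity invocation in part (2); everything else is bookkeeping about signs, domain inclusions, and monotonicity of $W^{(q)}$, so I do not anticipate a genuine obstacle.
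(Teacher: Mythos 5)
Your proof is correct and follows the route the paper intends: the paper offers no written proof beyond the remark that the lemma ``can be easily derived'' from the log-concavity of $W^{(q)}$ in Section \ref{section_log_concavity}, and your argument supplies exactly that derivation --- log-concavity (equivalently, the nonincreasing logarithmic derivative $W^{(q)\prime}(y+)/W^{(q)}(y)$ integrated over shifted intervals of fixed length) for part (2), and the shrinking domain $(-\infty,-\alpha)$ together with the monotonicity of $W^{(q)}$ and the nonnegativity of the integrand for part (1). The only point worth noting, which does not affect correctness, is the trivial edge case $\beta+u\leq 0$ in part (2), where the ratio $W^{(q)}(\beta+u)/W^{(q)}(\beta-\alpha)$ vanishes and the claimed monotonicity holds automatically.
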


Using Lemma \ref{remark_hat_psi_monotonicity}(2) and \eqref{scale_function_asymptotics}, for $\alpha > 0$, we can extend $\widehat{\lambda}(\alpha, \beta)$ to the cases $\beta = \alpha$ and $\beta = \infty$ with
\begin{align*}
\widehat{\lambda}(\alpha) &\equiv \widehat{\lambda}(\alpha,\alpha+) := \lim_{\beta \downarrow \alpha}\widehat{\lambda}(\alpha,\beta)  = -\left( \pcheck + q \costs \right)  + (\acheck - \costs) \bar{\nu}(\alpha),  \\
\widehat{\lambda}(\alpha,\infty) &:=  \lim_{\beta \rightarrow \infty}\widehat{\lambda}(\alpha,\beta) = -\left( \pcheck + q \costs \right)  +  (\acheck - \costs)  \Phi(q) \Psi (\alpha; \bar{\nu}) = \Phi(q) \Psi (\alpha; \widehat{\lambda}),
\end{align*}
where 
\begin{align*}
\bar{\nu}(x) := \nu(-\infty, - x), \quad x > 0.
\end{align*}
We shall see that the function $\widehat{\lambda} (\cdot)$ plays the same role as $\tilde{f}' (\cdot)$ in Examples \ref{example_two_sided_control} and \ref{example_inventory_control}. Because $\widehat{\lambda}(\cdot)$ and $\Psi(\cdot, \widehat{\lambda})$ are monotonically decreasing, we can define $\overline{\alpha} := \overline{a}(-\widehat{\lambda})$ and $\underline{\alpha} := \underline{a}(-\widehat{\lambda})$ as in Definitions \ref{def_a_bar} and \ref{def_a_bar_under}, respectively.  These will serve as bounds on $\alpha^*$ and we will have $\underline{\alpha} \leq \alpha^* < \overline{\alpha}$.

Egami et al.\ \cite{Leung_Yamazaki_2011} show that there always exists a pair $(\alpha^*,\beta^*)$ belonging to one of the following four cases:
\begin{description}
\item[case 1] $0 < \alpha^*  < \beta^* < \infty$; 
\item[case 2] $0 < \alpha^* < \beta^* = \infty$; 
\item[case 3] $0 = \alpha^*  < \beta^* < \infty$; 
\item[case 4] $0 = \alpha^* < \beta^* = \infty$;
\end{description}
which satisfy $\mathfrak{C}_\alpha$ when $\alpha^* > 0$ and  $\mathfrak{C}_\beta$ when $\beta^* < \infty$.


Here, we only give a brief sketch of the proof that if
\begin{align}
\underline{\alpha} > 0 \quad \textrm{and} \quad \sup_{\beta > \underline{\alpha}} \Lambda(\underline{\alpha}, \beta) > 0, \label{assump_case1}
\end{align}
 then \textbf{case 1}  holds.  (If these are violated, $\alpha^* = 0$ and/or $\beta^* = \infty$; see Remark \ref{remark_alpha_zero} below.)  
To this end, observe that
\begin{align}
\begin{split}
\frac \partial {\partial \alpha}\Lambda(\alpha,\beta)  
&= - W^{(q)} (\beta-\alpha) \widehat{\lambda}(\alpha) 
\end{split}
\label{Psi_derivative_A}
\end{align}
 is negative for every $\alpha \in (0,\overline{\alpha})$ by how $\overline{\alpha}$ is chosen  as in Definition \ref{def_a_bar}. Hence, the function $\alpha \mapsto \sup_{\beta > \alpha} \Lambda(\alpha, \beta)$ is monotonically decreasing on $(0, \overline{\alpha})$.  Thanks to the continuity of $\Lambda(\alpha, \beta)$ and \eqref{assump_case1}, 
 if we can show that 
  $\sup_{\beta > \overline{\alpha}}\Lambda(\overline{\alpha}, \beta) < 0$, then there must exist $\alpha^* \in (\underline{\alpha}, \overline{\alpha})$ such that $\sup_{\beta > \alpha^*}\Lambda(\alpha^*, \beta) = 0$ with its local maximum attained at $\beta^*$.
%
%
 Indeed, by Lemma \ref{remark_hat_psi_monotonicity}(2) and how $\overline{\alpha}$ is chosen, $\widehat{\lambda}(\overline{\alpha}, \beta) \leq 0$ or equivalently $\lambda(\overline{\alpha}, \beta)  \leq 0$ for $\beta \in (\overline{\alpha}, \infty)$ and hence $\sup_{\beta > \overline{\alpha}}\Lambda(\overline{\alpha}, \beta) = \Lambda(\overline{\alpha}, \overline{\alpha}+) = - (\gamma_I + \gamma_S)< 0$.
 
These  properties of the shapes of $\lambda$ and $\Lambda$ can be confirmed by the numerical plots given in Figure \ref{figure_psi}.

\begin{remark} \label{remark_alpha_zero}While the details are omitted in this note,  when \eqref{assump_case1} does not hold, necessarily $\alpha^* = 0$ and/or $\beta^* = \infty$.  In the latter case, it can be shown that the sup player never stops in the equilibrium. 

In the case $\alpha^* = 0$, it may not yield the Nash equilibrium for the unbounded variation case.  To see this, we notice that a default happens as soon as $X$ goes below zero. Therefore, in the event that $X$ continuously passes (creeps) through  zero, the inf player would optimally seek to exercise  at a level as close to zero as possible. Nevertheless, this timing strategy is not admissible, though it can be approximated arbitrarily closely by admissible  stopping times. It can be shown that $\alpha^*=0$ is possible only if  the jump part $X^d$ of $X$ is of bounded variation.  


\end{remark}

\begin{figure}[htbp]
\begin{center}
\begin{minipage}{1.0\textwidth}
\centering
\begin{tabular}{cc}
\includegraphics[scale=0.4]{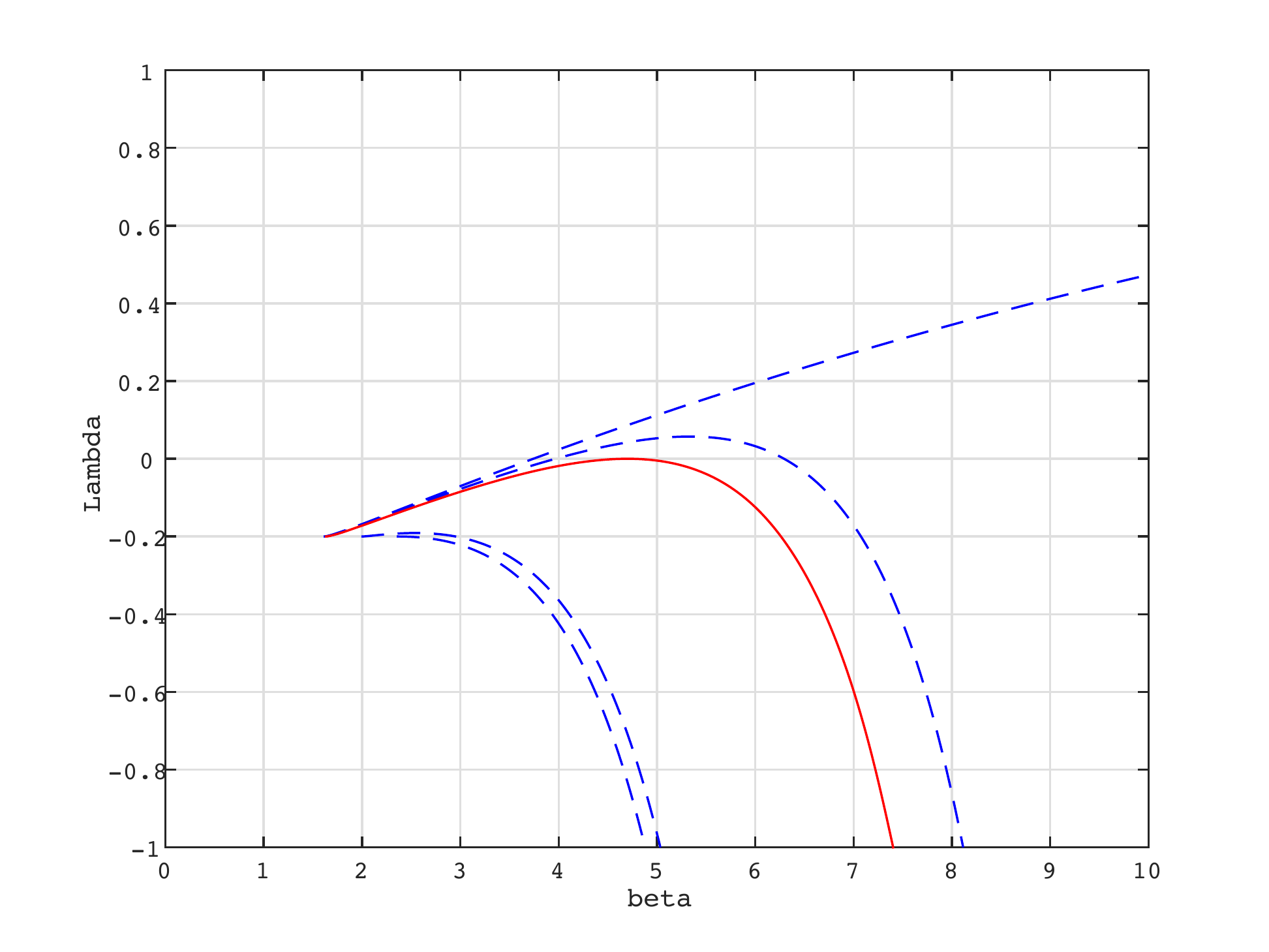}  & \includegraphics[scale=0.4]{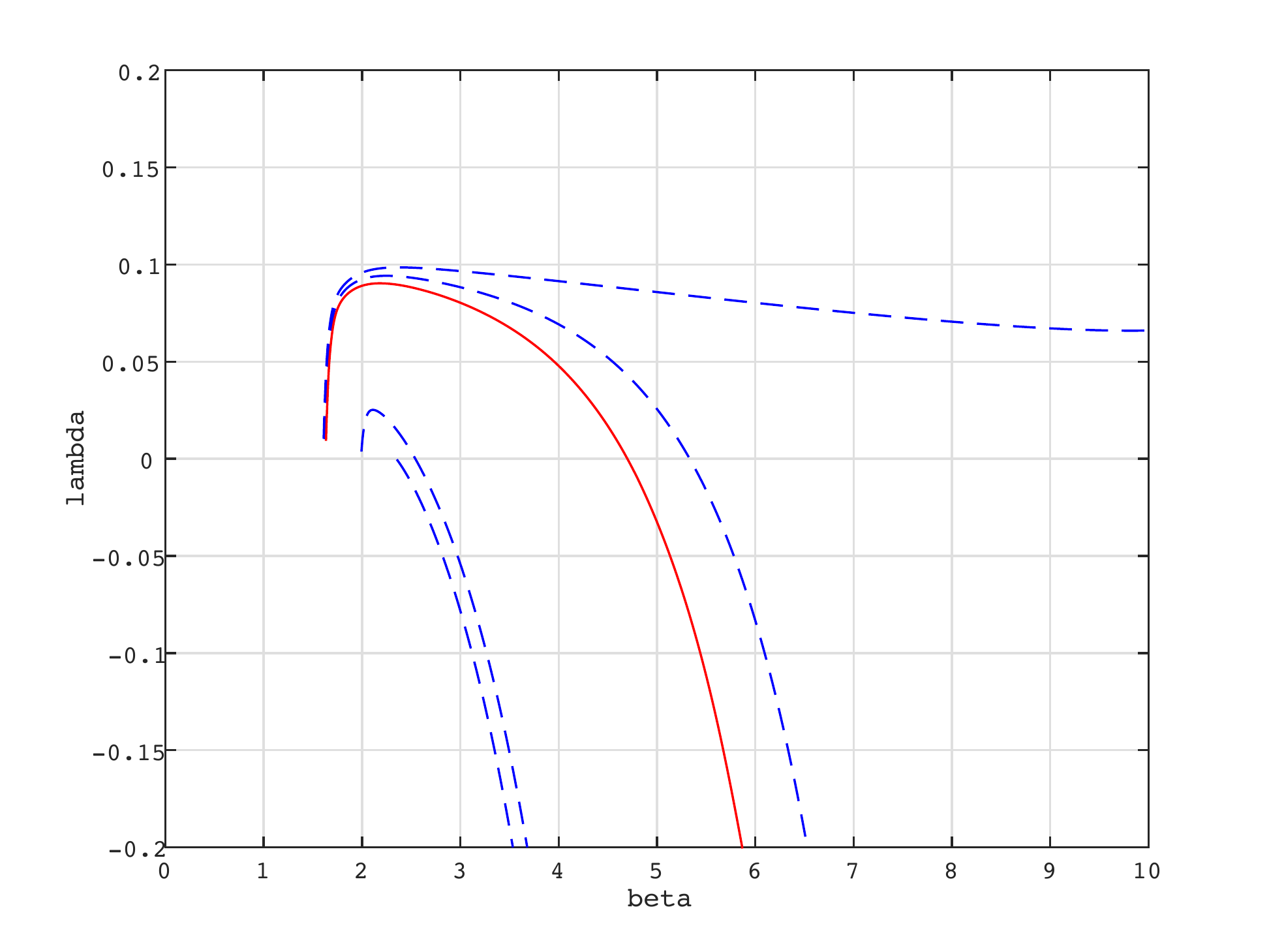} \\
$\beta \mapsto \Lambda(\alpha, \beta)$  & $\beta \mapsto \lambda(\alpha, \beta)$ 
\end{tabular}
\end{minipage}
\caption{Existence of $(\alpha^*, \beta^*)$ for Example \ref{example_CDS_game}. Plots of $\beta \mapsto \Lambda(\alpha, \beta)$ on $[\alpha, \infty)$ for the starting values $\alpha = \underline{\alpha}, (\underline{\alpha} + \alpha^*)/2, \alpha^*, (\alpha^* + \overline{\alpha})/2, \overline{\alpha}$.  The solid curve in red corresponds to the one for $\alpha = \alpha^*$; the point at which $\Lambda(\alpha^*, \cdot)$ is tangent to the x-axis (or $\lambda(\alpha^*, \cdot)$ vanishes) becomes $\beta^*$. The function $\Lambda(\underline{\alpha},\cdot)$ is monotonically increasing while $\Lambda(\overline{\alpha},\cdot)$ is monotonically decreasing. Equivalently, $\lambda(\underline{\alpha},\cdot)$ is uniformly positive while $\lambda(\overline{\alpha},\cdot)$ is uniformly negative. }  \label{figure_psi}
\end{center}
\end{figure}

\subsection{Variational inequalities and verification} \label{variational_inequality_games}
The verification of optimality (for both players) require that, when $\alpha^* < \beta^*$,
\begin{align} \label{variational_games}
\begin{split}
g_S(x) \leq v_{\alpha^*, \beta^*}(x)  &\leq g_I(x), \quad x \in \mathcal{I}, \\
(\mathcal{L}-q) v_{\alpha^*, \beta^*}(x)  &\geq 0, \quad  x  \in (-\infty, \alpha^*) \cap \mathcal{I}^o, 
\\
(\mathcal{L}-q) v_{\alpha^*, \beta^*}(x)   &= 0, \quad x \in (\alpha^*, \beta^*) \cap \mathcal{I}^o, 
\\
(\mathcal{L}-q) v_{\alpha^*, \beta^*}(x)  &\leq 0, \quad x \in (\beta^*, \infty) \cap \mathcal{I}^o. 
\end{split}
\end{align}
On the other hand, when $\alpha^* > \beta^*$, it requires that
\begin{align*}
g_S(x) \leq v_{\alpha^*, \beta^*}(x)  &\leq g_I(x), \quad x \in \mathcal{I}, \\
(\mathcal{L}-q) v_{\alpha^*, \beta^*}(x)  &\leq 0, \quad  x  \in (-\infty, \beta^*) \cap \mathcal{I}^o, \\
(\mathcal{L}-q) v_{\alpha^*, \beta^*}(x)   &= 0, \quad x \in (\beta^*, \alpha^*) \cap \mathcal{I}^o, \\
(\mathcal{L}-q) v_{\alpha^*, \beta^*}(x)  &\geq 0, \quad x \in (\alpha^*, \infty) \cap \mathcal{I}^o.
\end{align*}

Suppose $\alpha^* < \beta^*$.  From the inf player's perspective, assuming that the sup player's strategy is given by $\tau_{\beta^*}$ (so that the state space for the inf player is $\mathcal{I}_{\beta^*} := (-\infty, \beta^*) \cap \mathcal{I}$), the above variational inequalities satisfy those for the minimization problem for the inf player that
\begin{align*}
v_{\alpha^*, \beta^*}(x)  &\leq g_I(x), \quad x \in \mathcal{I}_{\beta^*}, \\
(\mathcal{L}-q) v_{\alpha^*, \beta^*}(x)  &\geq 0, \quad  x  \in (-\infty, \alpha^*) \cap \mathcal{I}_{\beta^*}^o, \\
(\mathcal{L}-q) v_{\alpha^*, \beta^*}(x)   &= 0, \quad x \in (\alpha^*, \beta^*). 
\end{align*}
Similarly, from the sup player's perspective, assuming that the inf player's strategy is given by $\theta_{\alpha^*}$ (so that the state space of the sup player is $\mathcal{I}_{\alpha^*} := (\alpha^*, \infty) \cap \mathcal{I}$), the above variational inequalities satisfy those for the maximization problem for the sup player that
\begin{align*}
v_{\alpha^*, \beta^*}(x)    &\geq g_S(x), \quad x \in \mathcal{I}_{\alpha^*},  \\
(\mathcal{L}-q) v_{\alpha^*, \beta^*}(x)  &\leq 0, \quad  x  \in (\beta^*, \infty) \cap \mathcal{I}_{\alpha^*}^o,\\
(\mathcal{L}-q) v_{\alpha^*, \beta^*}(x)   &= 0, \quad x \in (\alpha^*, \beta^*). 
\end{align*}
The case $\alpha^* > \beta^*$ is similar, and hence we omit the details.

This is a rough illustration on why these conditions are imposed for verification.  We refer the reader to \cite{Leung_Yamazaki_2011} and also \cite{ekstrom2008optimal, peskir2009optimal} for more rigorous arguments.    In general, if $v_{\alpha^*, \beta^*}$ is unbounded or $\mathcal{I}$ has a finite boundary at which $v_{\alpha^*, \beta^*}$ fails to be smooth/continuous, some localizing arguments are necessary.

%

\subsubsection{Verification for Example \ref{example_CDS_game}} \label{game_verification_example}

Here we shall illustrate a proof technique on how the candidate value function $v_{\alpha^*, \beta^*}$ solves the variational inequalities, focusing on  Example \ref{example_CDS_game} for the case $0 < \alpha^* < \beta^* < \infty$.

 \begin{figure}[htbp]
\begin{center}
\begin{minipage}{1.0\textwidth}
\centering
\begin{tabular}{c}
 \includegraphics[scale=0.4]{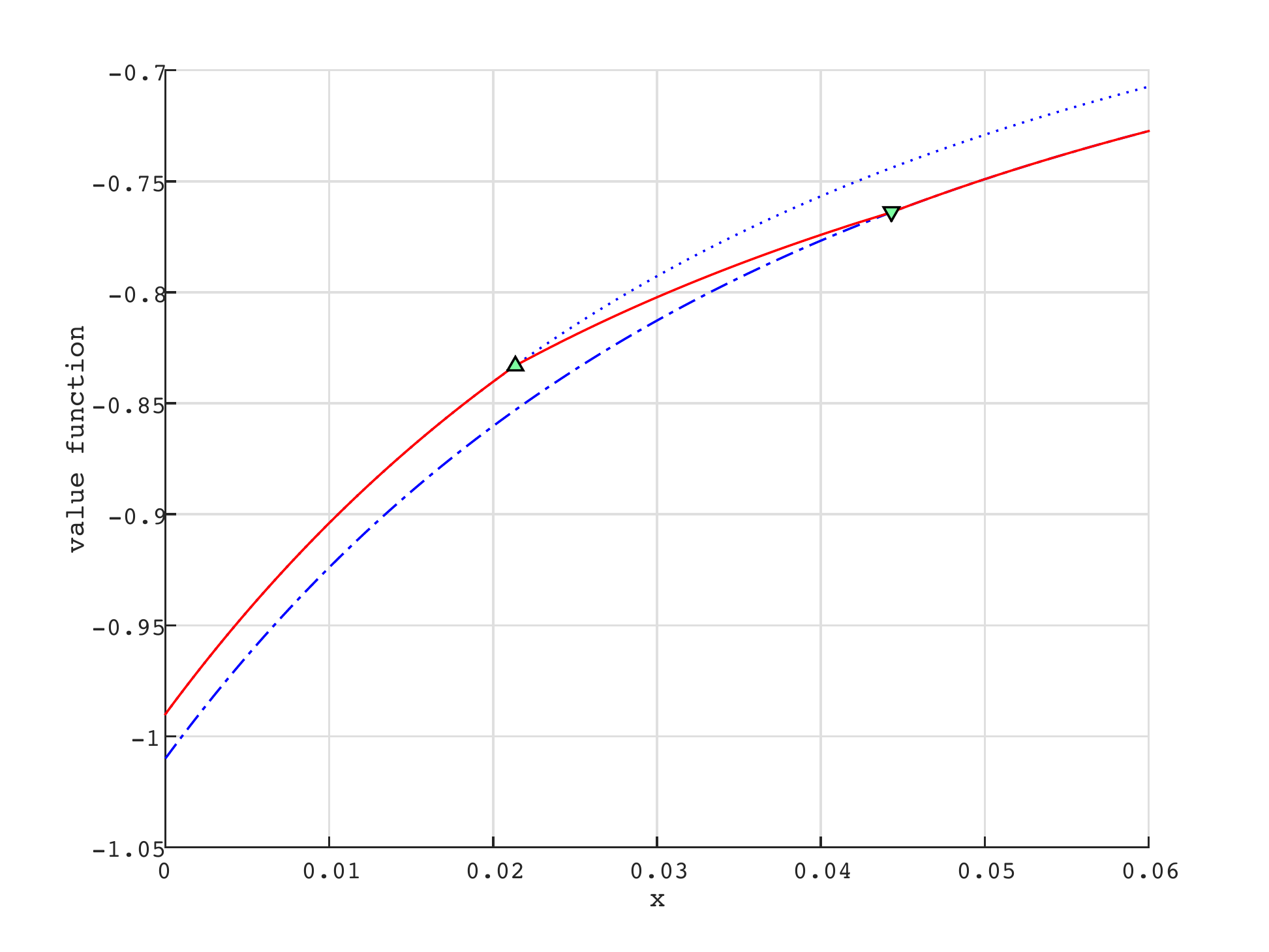}
 \end{tabular}
\end{minipage}
\caption{A sample plot of the value function $v_{\alpha^*, \beta^*}$ (solid red line) for Example \ref{example_CDS_game} when $X$ is of unbounded variation. The up-pointing and down-pointing triangles show the points at $\alpha^*$ and $\beta^*$, respectively. The two dotted lines show the stopping values $g_S$ and $g_I$.}   \label{figure_value_function_game}
\end{center}
\end{figure}

By \eqref{delta_by_upsilon}, we can write
\begin{align}
\begin{split}
v_{\alpha^*,\beta^*}(x) &= \left\{ \begin{array}{ll} g_S(x), & x \geq \beta^* \\
g_S(x) + (v_{\alpha^*,\beta^*}(x) - g_S(x)), & \alpha^* < x < \beta^* \\
g_I(x), & x \leq \alpha^*
\end{array}\right\} = - \Big(\frac{\pcheck}{q} +\acheck\Big) \zeta(x) + J(x)
\end{split} \label{value_function}
\end{align}
where
\begin{align} \label{def_J}
J(x) := \left\{ \begin{array}{ll} \frac{\pcheck}{q} - \costb, & x \geq \beta^*, \\
 \Upsilon(x; \alpha^*, \beta^*),  & \alpha^* < x < \beta^*, \\
\frac{\pcheck}{q} +\costs, & 0 \leq x < \alpha^*, \\ \frac{\pcheck}{q} +\acheck & x \leq 0.
\end{array}\right.
\end{align}
Here, by \eqref{cond_at_alpha},
\begin{align}
\begin{split}
\Upsilon(x;\alpha^*,\beta^*) &=  \Big(\frac p q + \costs \Big)  Z^{(q)} (x-\alpha^*) - \frac {1-\costs} q \int_{(-\infty, -\alpha^*)}  \left( Z^{(q)}(x-\alpha^*) - Z^{(q)}(x+u) \right) \nu(\diff u).
\end{split}  \label{upsilon_simplified}
\end{align}
See Figure \ref{figure_value_function_game} for a sample plot of the value function along with the stopping values.

Below, we show briefly that $v_{\alpha^*, \beta^*}$ solves \eqref{variational_games} when $0 < \alpha^* < \beta^* < \infty$.
\begin{lemma} \label{lemma_verification_games}
Suppose $W^{(q)}$ is sufficiently smooth on $(0, \infty)$ (i.e.\ $C^1$ when $X$ is of bounded variation and $C^2$ when it is of unbounded variation).  Then we have the following:
\begin{enumerate}
\item $g_S(x) \leq v_{\alpha^*, \beta^*}(x)  \leq g_I(x), \quad x \in [0, \infty)$, 
\item $(\mathcal{L}-q) v_{\alpha^*, \beta^*}(x)  \geq 0, \quad  x  \in (0, \alpha^*)$, 
\item $(\mathcal{L}-q) v_{\alpha^*, \beta^*}(x)   = 0, \quad x \in (\alpha^*, \beta^*)$, 
\item $(\mathcal{L}-q) v_{\alpha^*, \beta^*}(x)  \leq 0, \quad x \in (\beta^*, \infty)$. 
\end{enumerate}
\end{lemma}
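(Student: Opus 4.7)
The plan is to exploit the explicit decomposition $v_{\alpha^*,\beta^*}(x) = -(p/q+1)\zeta(x) + J(x)$ from \eqref{value_function}--\eqref{def_J}, where $J$ is the piecewise function equal to $p/q+1$ on $(-\infty,0]$, $p/q+\gamma_I$ on $(0,\alpha^*)$, $\Upsilon(\cdot;\alpha^*,\beta^*)$ on $(\alpha^*,\beta^*)$, and $p/q-\gamma_S$ on $[\beta^*,\infty)$. Since $\zeta = Z^{(q)} - (q/\Phi(q))W^{(q)}$ satisfies $(\mathcal{L}-q)\zeta \equiv 0$ on $(0,\infty)$ by Section \ref{subsection_martigale_properties}, one obtains $(\mathcal{L}-q)v_{\alpha^*,\beta^*}(x) = (\mathcal{L}-q)J(x)$ throughout $(0,\infty)$, and the analysis reduces to the simpler function $J$.

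A preparatory step pins down the shape of $\Upsilon$ on $(\alpha^*,\beta^*)$. The condition $\mathfrak{C}_\alpha$ simplifies \eqref{upsilon_in_terms_of_scale_function} to $\Upsilon(x)=p/q-\gamma_S-\Lambda(\alpha^*,x)$, and differentiation yields $\Upsilon'(x) = -W^{(q)}(x-\alpha^*)\,\widehat{\lambda}(\alpha^*,x)$. Combining $\mathfrak{C}_\beta'$ (i.e.\ $\widehat{\lambda}(\alpha^*,\beta^*)=0$) with Lemma \ref{remark_hat_psi_monotonicity}(2) will show $\Upsilon$ is strictly decreasing on $(\alpha^*,\beta^*)$, with continuous-fit boundary values $\Upsilon(\alpha^*+)=p/q+\gamma_I$ and $\Upsilon(\beta^*-)=p/q-\gamma_S$. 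Consequently $J$ is non-increasing on $\R$, with a single downward jump of magnitude $1-\gamma_I>0$ at zero. Part (1) then follows immediately: on $(\alpha^*,\beta^*)$ the trapping of $\Upsilon$ translates via \eqref{delta_by_upsilon} into $g_S\le v_{\alpha^*,\beta^*}\le g_I$, and on the remaining regions $v_{\alpha^*,\beta^*}$ coincides with $g_I$ or $g_S$ and the other inequality is furnished by $g_I-g_S=\gamma_I+\gamma_S>0$.

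For Part (2), on $(0,\alpha^*)$ the function $J$ is locally constant at $p/q+\gamma_I$, so the generator is driven only by the integral correction across zero, yielding $(\mathcal{L}-q)J(x) = -(p+q\gamma_I)+(1-\gamma_I)\bar{\nu}(x) = \widehat{\lambda}(x)$; since $\widehat{\lambda}(\cdot)$ is decreasing with $\widehat{\lambda}(\overline{\alpha})=0$ and $\alpha^*<\overline{\alpha}$, this is positive. Part (3) follows from the probabilistic interpretation of $\Upsilon$ as the discounted expected payoff at the first exit of $[\alpha^*,\beta^*]$: by the strong Markov property together with the smoothness supplied by Remark \ref{remark_smoothness}, the stopped process $e^{-qt}\Upsilon(X_{t\wedge\theta_{\alpha^*}\wedge\tau_{\beta^*}})$ is a local martingale, yielding $(\mathcal{L}-q)\Upsilon\equiv 0$ on $(\alpha^*,\beta^*)$.

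The main obstacle is Part (4), where the integral part of $\mathcal{L}$ picks up nontrivial contributions from below $\beta^*$ because $v_{\alpha^*,\beta^*}\neq g_S$ there. For $x>\beta^*$, $J$ is locally constant so
\[
(\mathcal{L}-q)J(x) \;=\; -(p-q\gamma_S) + \int_{(-\infty,0)} \widetilde{\Delta}(x+z)\,\nu(\diff z), \qquad \widetilde{\Delta}(y) := J(y) - (p/q-\gamma_S) \geq 0,
\]
and the monotonicity of $J$ established above makes $y\mapsto\widetilde{\Delta}(y)$ non-increasing, so the right-hand side is non-increasing in $x$ on $(\beta^*,\infty)$. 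It therefore suffices to bound its right-limit at $\beta^*$. From Part (3) the corresponding left-limit is zero; computing $(\mathcal{L}-q)J(\beta^*-)$ using $J(\beta^*-)=p/q-\gamma_S$, $J'(\beta^*-)=\Upsilon'(\beta^*-)=0$ (smooth fit), and $J''(\beta^*-)=\Upsilon''(\beta^*-)\ge 0$ (obtained by differentiating $\Upsilon'(x) = -W^{(q)}(x-\alpha^*)\widehat{\lambda}(\alpha^*,x)$ at $x=\beta^*$ and invoking $\widehat{\lambda}(\alpha^*,\beta^*)=0$ with Lemma \ref{remark_hat_psi_monotonicity}(2)), the difference of the two one-sided generator limits collapses to $(\mathcal{L}-q)J(\beta^*+) = -\tfrac{1}{2}\sigma^2\,\Upsilon''(\beta^*-) \leq 0$. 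Combined with the monotonicity this delivers $(\mathcal{L}-q)v_{\alpha^*,\beta^*}\le 0$ on $(\beta^*,\infty)$. The delicate point is the correct bookkeeping of the $C^2$ behavior of $\Upsilon$ at $\beta^*$ in the unbounded-variation case with $\sigma>0$; when $\sigma=0$, the $\Upsilon''$ contribution drops out entirely and the argument becomes cleaner.
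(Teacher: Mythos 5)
Your proposal is correct in substance and, for parts (2)--(4), follows essentially the same route as the paper: the decomposition $v_{\alpha^*,\beta^*}=-(p/q+1)\zeta+J$ with $(\mathcal{L}-q)\zeta=0$, the identity $(\mathcal{L}-q)J=\widehat{\lambda}$ on $(0,\alpha^*)$ together with $\alpha^*<\overline{\alpha}$, and, for $(\beta^*,\infty)$, the two-step argument that $(\mathcal{L}-q)J(\beta^*+)\leq (\mathcal{L}-q)J(\beta^*-)=0$ (via continuity, smooth fit $J'(\beta^*\pm)=0$, and the sign of the second-derivative mismatch) followed by monotonicity of $x\mapsto\int[J(x+u)-(p/q-\gamma_S)]\nu(\diff u)$ coming from the monotonicity of $J$ --- this is precisely the paper's proof of item (4). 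Where you genuinely diverge is item (1): the paper perturbs the \emph{barriers}, showing $\frac{\partial_+}{\partial_+\alpha}(v_{\alpha,\beta^*}(x)-g_I(x))>0$ and $\frac{\partial_+}{\partial_+\beta}(v_{\alpha^*,\beta}(x)-g_S(x))>0$ via log-concavity of $W^{(q)}$, and then invokes continuous fit at the moving barrier; you instead observe that under $\mathfrak{C}_\alpha$ one has $\Upsilon'(x)=-\lambda(\alpha^*,x)\leq 0$ on $(\alpha^*,\beta^*)$ by \eqref{Psi_psi_negative}, so $\Upsilon$ decreases monotonically in $x$ from $p/q+\gamma_I$ to $p/q-\gamma_S$, and \eqref{delta_by_upsilon} immediately traps $v_{\alpha^*,\beta^*}$ between $g_S$ and $g_I$. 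Your argument is shorter and bypasses the log-concavity computation at this stage (it still enters indirectly through Lemma \ref{remark_hat_psi_monotonicity}, on which \eqref{Psi_psi_negative} rests); the paper's barrier-perturbation argument is the one that generalizes to situations where $\Upsilon$ is not monotone in $x$.

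One point to tighten: in item (3) you derive $(\mathcal{L}-q)\Upsilon=0$ from the local-martingale property of $e^{-qt}\Upsilon(X_{t\wedge\theta_{\alpha^*}\wedge\tau_{\beta^*}})$ and then transfer this to $J$. But $\mathcal{L}$ is nonlocal, and $J$ does \emph{not} coincide with the probabilistically defined $\Upsilon(\cdot;\alpha^*,\beta^*)$ outside $(\alpha^*,\beta^*)$ (there $\Upsilon$ equals $\gamma_I$, $-\gamma_S$, or $0$, while $J$ equals the constants $p/q+\gamma_I$, $p/q-\gamma_S$, $p/q+1$), so harmonicity of one does not formally give harmonicity of the other. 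The fix is either to run the Doob-martingale argument on $v_{\alpha^*,\beta^*}$ itself (which does equal the stopping payoffs $g_I$, $g_S$ on the exterior regions, so $e^{-q(t\wedge\theta_{\alpha^*}\wedge\tau_{\beta^*})}v_{\alpha^*,\beta^*}(X_{t\wedge\theta_{\alpha^*}\wedge\tau_{\beta^*}})$ is a genuine martingale), or to follow the paper and read off $(\mathcal{L}-q)v_{\alpha^*,\beta^*}=0$ algebraically from the explicit representation \eqref{upsilon_simplified} together with \eqref{martingale_Z_R} and \eqref{zeta_harmonicity}.
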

\begin{proof}[Brief sketch of proof]
(1) We show for $x \in (\alpha^*, \beta^*)$; the other cases are immediate.  

The proof is relatively straightforward by  the log-concavity of the scale function as in Section \ref{section_log_concavity} and the shapes of $\Lambda$ and $\lambda$ given by 
\begin{align}
\Lambda(\alpha^*, \beta) \leq 0 \quad \textrm{and} \quad \lambda(\alpha^*, \beta) \geq 0, \quad \alpha^* <  \beta < \beta^*. \label{Psi_psi_negative}
\end{align}
Here \eqref{Psi_psi_negative} holds because, by Lemma \ref{remark_hat_psi_monotonicity},  $\beta \mapsto \Lambda(\alpha^*, \beta)$ increases on $(\alpha^*, \beta^*)$ and decreases on $(\beta^*, \infty)$ with its peak given at $\Lambda(\alpha^*, \beta^*) = 0$ (see Figure \ref{figure_psi}). 

Now, with the help of \eqref{Psi_derivative_A} and the log-concavity,
\begin{align*}
\frac {\partial_+} {\partial_+ \alpha} (v_{\alpha, \beta^*}(x) - g_I(x))   = \Big[ \frac {\partial_+} {\partial_+ \alpha} \frac {W^{(q)}(x-\alpha)} {W^{(q)}(\beta^*-\alpha)} \Big] \Lambda(\alpha,\beta^*) > 0, \quad \alpha^* < \alpha < x < \beta^*.
\end{align*}
Hence, by \eqref{eq_continuous_fit_A} and \eqref{Psi_psi_negative}, $0 \geq {W^{(q)}(0)} \Lambda(x,\beta^*) / W^{(q)} (x-\beta^*) = v_{x,\beta^*}(x+) - g_I(x) \geq  v_{\alpha^*,\beta^*}(x) - g_I(x)$ for $\alpha^* < x < \beta^*$.

On the other hand, by \eqref{Psi_psi_negative},
\begin{align*}
\frac {\partial_+} {\partial_+ \beta} (v_{\alpha^*,\beta}(x) - g_S(x)) 
 &= \frac {W^{(q)}(x-\alpha^*)} {(W^{(q)}(\beta-\alpha^*))^2} \big[ \lambda(\alpha^*,\beta)  W^{(q)}(\beta-\alpha^*) - \Lambda(\alpha^*,\beta) W^{(q) \prime}((\beta-\alpha^*)+)\big] \\
&> 0, \quad  \alpha^* < x < \beta < \beta^*.
\end{align*}
Therefore, by \eqref{eq_continuous_fit_B},
$0 = v_{\alpha^*,x}(x-) - g_S(x)  \leq  v_{\alpha^*,\beta^*}(x) - g_S(x)$ for $\alpha^* < x  < \beta^*$.


(2) By the assumption that $W^{(q)}$ is sufficiently smooth, the identity \eqref{martingale_W}  holds, and therefore
\begin{align}
(\mathcal{L}-q)\zeta(x) = 0, \quad x > 0. \label{zeta_harmonicity}
\end{align}
Hence,
\begin{align}
(\mathcal{L}-q) v_{\alpha^*,\beta^*}(x)  =  ( \acheck - \costs ) \bar{\nu}(x)  - (q \costs + \pcheck) =  \widehat{\lambda}(x). \label{generator_case_1}
\end{align}
Because $x < \alpha^* < \overline{\alpha}$, this must be positive by how $\overline{\alpha}$ is chosen.

(3) In view of \eqref{value_function}, \eqref{def_J}, and \eqref{upsilon_simplified}, it is immediate by  \eqref{martingale_Z_R} together with \eqref{zeta_harmonicity}. 

(4) This is as usual the hardest part because the process can jump from the stopping region of the sup player $(\beta^*, \infty)$ to the other two regions $(-\infty, \alpha^*)$ and $(\alpha^*, \beta^*)$, where the form of $v_{\alpha^*, \beta^*}$ changes. However, it is more straightforward than the two-sided singular control case that we studied in Section \ref{section_singular_control}.

In Egami et al.\ \cite{Leung_Yamazaki_2011}, they first show that $(\mathcal{L}-q)v_{\alpha^*,\beta^*}(\beta^*+) \leq (\mathcal{L}-q)v_{\alpha^*,\beta^*}(\beta^*-) = 0$ using how $\alpha^*$ and $\beta^*$ are chosen so that $v_{\alpha^*, \beta^*}$ gets smooth/continuous at $\beta^*$.  It then remains to show that $x \mapsto (\mathcal{L}-q)v_{\alpha^*,\beta^*}(x) $ is decreasing on $(\beta^*, \infty)$.  In view of the decomposition \eqref{value_function} and also \eqref{zeta_harmonicity}, it is equivalent to showing that $(\mathcal{L}-q)J(x) $  is decreasing on $(\beta^*, \infty)$.  Indeed, because $J'=J''=0$ on $x > \beta^*$,
\begin{align*}
(\mathcal{L}-q)J(x) = \int_{(-\infty, \beta^*-x)} \left[ J(x+u) - \Big(\frac p q - \costb \Big) \right] \nu(\diff u)  - (p - q \costb), \quad x > \beta^*,
\end{align*} 
where the integrand is nonnegative and monotonically decreasing in $x$ and the set $(-\infty, \beta^*-x)$ is decreasing in $x$ as well.
\end{proof}

In Table \ref{table_game}, we summarize the functions and parameters that played major roles in the above analysis for Examples  \ref{example_CDS_game}.
\begin{table}[h]
\centering
\begin{tabular}{l l}
\hline
$\Lambda(\alpha,\beta)$ &$:= \frac p q - \costb  - \big(\frac p q + \costs \big)  Z^{(q)} (\beta-\alpha) +  \frac {1-\costs} q \int_{(-\infty, -\alpha)}  \left[ Z^{(q)}(\beta-\alpha) - Z^{(q)}(\beta+u) \right] \nu(\diff u) $  \\
$\widehat{\lambda}(\alpha)$ &$:= -( \pcheck + q \costs )  + (\acheck - \costs) \bar{\nu}(\alpha)$
\\ [0.5ex] 
\hline
$\underline{\alpha}$& $:=\underline{a}(- \widehat{\lambda})$ \\
$\leq \alpha^*$& $:=\alpha^*$ of $(\alpha^*, \beta^*)$ such that  $\mathfrak{C}_\alpha$ and $\mathfrak{C}_\beta$ hold simultaneously\\
$< \overline{\alpha}$& $:=\bar{a}(-\widehat{\lambda})$  \\
$< \beta^*$ & $:=\beta^*$ of $(\alpha^*, \beta^*)$ such that  $\mathfrak{C}_\alpha$ and $\mathfrak{C}_\beta$ hold simultaneously  \\
\hline
\end{tabular} \vspace{0.3cm}
\caption{Summary of the key functions and parameters in Example \ref{example_CDS_game}. It can be shown that $\alpha^* = \underline{\alpha}$ when $\beta^* = \infty$.}
\label{table_game}
\end{table}

\subsection{Other optimal stopping games}

There are many other existing games studied for a spectrally one-sided \lev process.  The following problems can be formulated as \eqref{saddle_pt}.  However, there are clear differences with the problem considered above.

\begin{example} \label{example_mckean}
The McKean optimal stopping game corresponds to the case $\mathcal{I} = \R$ with $g_S (x) = g(x) =  (K - e^x) \vee 0$ and $g_I = (K - e^x) \vee 0 + \delta$ for some $K, \delta > 0$. In other words, this is an extension of the American put option where the seller (inf player) can also exercise with an additional fee $\delta$.  This problem was solved by Baurdoux and Kyprianou \cite{Baurdoux2008} for a spectrally negative \lev process. It is required that $0 \leq \psi(1) \leq q$ for the solution to be nontrivial.
\end{example}

\begin{example}  \label{example_convertible}As a way to model a version of the convertible bond, Gapeev and K{\"u}hn \cite{gapeev2005perpetual} and Baurdoux et al.\  \cite{BauKyrianouPardo2011} considered the problem where the cost (resp.\ reward) for the inf (resp.\ sup) player is given by 
\begin{align*}
V(x; \theta, \tau) :=\E_x \Big[ \int_0^{\tau \wedge \theta} e^{-qt} \big( C_1 + C_2 e^{X_t} \big)\diff t  + 1_{\{\theta \leq \tau \}} e^{- q \theta} (e^{X_\theta} \vee K) + 1_{\{\tau < \theta \}} e^{- q \tau + X_\tau} \Big],
\end{align*} 
for $C_1 \geq 0$ and $C_2, K > 0$.
This can be easily transformed to the formulation given in the beginning of this section.  Indeed, by the strong Markov property, we can write $V(x; \theta, \tau)  = v(x; \theta, \tau) + F(x)$
where
\begin{align*}
F(x) &:= \E_x \Big[ \int_0^{\infty} e^{-rt} (C_1 + C_2 e^{X_t} ) \diff t \Big], \\
v(x; \theta, \tau) &:= \E_x \Big[  1_{\{\theta \leq \tau \}} e^{- q \theta} \big( e^{X_\theta} \vee K - F(X_\theta) \big) + 1_{\{\tau < \theta \}} e^{- q \tau} (e^{X_\tau} - F(X_\tau))\Big].
\end{align*}
Hence, solving this is equivalent to solving \eqref{def_V} with  $g_I(x) = g(x) = e^x \vee K - F(x)$, $g_S(x) = e^x - F(x)$, and $\mathcal{I} = \R$.

Gapeev and K{\"u}hn \cite{gapeev2005perpetual}  considered the case of a Brownian motion plus i.i.d.\  exponential jumps.
Baurdoux et al.\  \cite{BauKyrianouPardo2011} studied for a spectrally positive \lev process.
\end{example}

In these examples, while the fluctuation theory and scale function can be used as main tools, the above techniques described in this section may not be directly used.

In Example \ref{example_mckean}, Baurdoux and Kyprianou \cite{Baurdoux2008} showed that the equilibrium is given by either $\tau^* := \inf \{ t > 0: X_t < k^*\}$ and $\sigma^* = \infty$, or $\tau^* := \inf \{ t > 0: X_t < x^*\}$ and $\sigma^* := \inf \{ t > 0: X_t \in [\log K, y^*]\}$ for some thresholds $k^*, x^*$ and $y^*$.  While continuous/smooth fit can be used to identify these values, due to the critical barrier $\log K$, one does not observe the dependency between the two parameters that we have seen in this section.

In Example \ref{example_convertible}, as shown in \cite{gapeev2005perpetual} and \cite{BauKyrianouPardo2011}, the equilibrium is given by two up-crossing times where at least one of them is the first time $X$ goes above the critical barrier $\log K$.  Therefore, again one does not observe the dependency between the two parameters.

\subsection{When a stopper is replaced with a controller}

One can naturally consider the case where the stopper(s) are replaced with singular controller(s).

The game between a controller and a stopper has been studied by Hern\'andez-Hern\'andez et al.\ \cite{hernandez2015zero} for the case driven by a diffusion process, where they obtained general results on the verification lemma and gave some explicitly solvable examples.

The case driven by a spectrally one-sided \lev process is studied by Hern\'andez-Hern\'andez and Yamazaki \cite{hernandez2015games}, where they considered  the problem where a stopper maximizes and a controller minimizes the expected value of some monotone payoff.  They considered both the spectrally negative and positive cases. Not surprisingly, the solution procedures are similar to the ones illustrated in this note: the candidate barriers $(a^*,b^*)$, which separate the state space into the stopping, waiting, and controlling regions,  are chosen by continuous/smooth fit so that
\begin{enumerate}
\item the value function at the boundary for the controller is continuously differentiable (resp.\ twice continuously differentiable) if it is irregular (resp.\ regular) for the controlling region;
\item the value function at the boundary for the stopper is continuous (resp.\ continuously differentiable) if it is irregular (resp.\ regular) for the stopping region.
 \end{enumerate}
 The verification of optimality can be carried out by showing the verification lemma as in the one given in Section \ref{variational_inequality_games}.  As we have seen, many parts of the verification can be carried out without much effort.  However, the difficulty is again to show the sub/super harmonicity at the region where the process can jump instantaneously to  the other regions. To deal with this, Hern\'andez-Hern\'andez and Yamazaki \cite{hernandez2015games} applied similar techniques as the ones discussed in Sections \ref{verification_singular} and \ref{game_verification_example}.
 
 The game between two singular controllers is also of great interest.  Under a certain monotonicity assumption on the payoff function, it is expected that the optimally controlled process becomes the doubly reflected \lev process similarly to the two-sided singular control case we studied in Section \ref{section_singular_control}.  Hence, the candidate value function can be computed again using the scale function and is expected to preserve the same smoothness as those observed in Section \ref{section_singular_control}.  Consequently, the two boundaries can be chosen in essentially the same way.  The verification lemma can be easily obtained by modifying \eqref{VI_system_double_reflected}.  It is expected that many of the techniques used in Section \ref{section_singular_control} can be recycled.

\section*{Acknowledgements}

K. Yamazaki is in part supported by MEXT KAKENHI Grant Number  26800092.

\bibliographystyle{abbrv}
\bibliography{dual_model_bib}

\end{document}